\numberwithin{equation}{section}
\setlist[enumerate,1]{label={\rm(\arabic*)}, ref={\rm\arabic*}} 
\setlist[enumerate,2]{label={\rm(\alph*)}, ref={\rm\alph*}} 
\newtheorem{theoreme}{Theorem}[section]
\newtheorem{lemme}[theoreme]{Lemma}
\newtheorem{proposition}[theoreme]{Proposition}
\theoremstyle{definition}
\theoremstyle{remark}
\newtheorem{remarque}[theoreme]{Remark}
\DeclareMathOperator{\Alb}{Alb}
\DeclareMathOperator{\alb}{alb}
\DeclareMathOperator{\CH}{CH}
\DeclareMathOperator{\Sym}{Sym}
\DeclareMathOperator{\eq}{eq}
\DeclareMathOperator{\Span}{Span}
\DeclareMathOperator{\Pic}{Pic}
\DeclareMathOperator{\id}{id}
\DeclareMathOperator{\Ker}{Ker}
\DeclareMathOperator{\Coker}{Coker}
\DeclareMathOperator{\coker}{coker}
\DeclareMathOperator{\red}{red}
\DeclareMathOperator{\rank}{rank}
\DeclareMathOperator{\Hom}{Hom}
\DeclareMathOperator{\torsion}{torsion}
\DeclareMathOperator{\Fl}{Fl}
\DeclareMathOperator{\Gr}{Gr}
\DeclareMathOperator{\ev}{ev}
\DeclareMathOperator{\topp}{top}
\DeclareMathOperator{\univ}{univ}
\DeclareMathOperator{\pt}{pt}
\DeclareMathOperator{\Line}{line}
\def\isolow{\vbox to 0pt{\vss\hbox{$\scriptstyle\sim$}\vskip-1.8pt}}
\newcommand{\isor}{\xrightarrow{\;\isolow\;}}
\newcommand{\shortisor}{\xrightarrow{\isolow}}
\def\shortbar{%
\smash{\scalebox{0.4}[1.0]{$-$}}}
\newcommand{\xdashrightarrow}[2][]{\ext@arrow 0359\rightarrowfill@@{#1}{#2}}
\def\rightarrowfill@@{\arrowfill@@\relax\shortbar\dashrightarrow}
\def\arrowfill@@#1#2#3#4{%
  $\m@th\thickmuskip0mu\medmuskip\thickmuskip\thinmuskip\thickmuskip
   \relax#4#1
   \xleaders\hbox{$#4#2$}\hfill
   #3$%
}
\newcommand{\longdashrightarrow}{\xdashrightarrow{\hspace{9pt}}}
\newcommand{\xrightarrowdbl}[1]{%
  \xrightarrow{#1}\mathrel{\mkern-14mu}\rightarrow
}
\title{Remarks on the geometry of the variety of planes\\ of a cubic fivefold}
\author{Ren\'e Mboro}
\address{UMiami Miami, HSE Moscow,\\
Institute of Mathematics and Informatics, Bulgarian Academy of Sciences,
Acad.\ G.~Bonchev Str.\ bl.~8, 1113, Sofia, Bulgaria}
\email{rene.mboro@polytechnique.edu}
\begin{document}



\maketitle

\begin{prelims}

\DisplayAbstractInEnglish

\bigskip

\DisplayKeyWords

\medskip

\DisplayMSCclass







\end{prelims}


\newpage

\setcounter{tocdepth}{1}

\tableofcontents


\section{Introduction}
To understand the topology and the geometry of smooth complex hypersurfaces $X\subset \mathbb P(V^*)\simeq\mathbb P^{n+1}$, various auxiliary manifolds have been introduced in the past century, of which the intermediate Jacobian
$$
J^n(X):=(H^{k-1,k+2}(X)\oplus\cdots\oplus H^{0,n})/H^n(X,\mathbb Z)_{/ \torsion}
$$
when $n=2k+1$ is odd is one of the most widely known since the seminal work of Clemens--Griffiths (\cite{Cl-Gr}) on the cubic $3$-fold.

 Cubic $5$-folds are classically (\textit{cf.}~\cite{griffiths_periods}) known to be the only hypersurfaces of dimension greater than $3$ for which the intermediate Jacobian, which is in general just a (polarised) complex torus, is a (non-trivial) principally polarised abelian variety.

 Another interesting series of varieties classically associated to $X$ are the varieties $F_m(X)\subset G(m+1,V)$ of $m$-planes contained in $X$.
 
 Starting from Collino (\cite{Coll_cub}), some properties of the variety of planes $F_2(X)\subset G(3,V)$ of a cubic $5$-fold $X$ have been studied in connection with the $21$-dimensional intermediate Jacobian $J^5(X)$. In \textit{loc.\ cit.}, the following is proven.

\begin{theoreme}\label{thm_Collino_intro} For a general cubic $X\subset \mathbb P(V^*)\simeq \mathbb P^6$, $F_2(X)$ is a smooth irreducible surface, and the Abel--Jacobi map 
  of the family of planes\, $\Phi_{\mathcal P}\colon F_2(X)\rightarrow J^5(X)$ is an immersion; \textit{i.e.}, the associate tangent map is injective and induces an isomorphism of abelian varieties
  $$
  \phi_{\mathcal P}\colon \Alb(F_2(X))\isor J^5(X),
  $$
  where $\mathcal P\in {\CH}^5(F_2(X)\times X)$ is the universal plane over $F_2(X)$. Equivalently, $q_*p^*\colon H^3(F_2(X),\mathbb Z)_{/ \torsion}\rightarrow H^5(X,\mathbb Z)$ is an isomorphism of Hodge structures, where the maps are defined by
  $$\xymatrix{\mathcal P\ar[r]^q\ar[d]^p &X\\ 
    F_2(X). &}
$$
\end{theoreme}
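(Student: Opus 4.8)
The plan is to realise $F_2(X)$ as a zero locus in a Grassmannian, establish its smoothness, irreducibility and the single numerical invariant that matters (the irregularity $q=21$), then identify the differential of the Abel--Jacobi map $\Phi_{\mathcal P}$ with an explicit residue pairing coming from the Jacobian ring of $X$, and finally upgrade the resulting immersion statement to the global isomorphism. Write $X=Z(f)$ for a cubic form $f$, let $G:=G(3,7)$ and let $\mathcal S$ be the tautological rank-$3$ subbundle; then $F_2(X)$ is the zero scheme of the section $s_f$ of $\mathcal E:=\Sym^3\mathcal S^\vee$ (rank $10$) determined by $f$, so its expected dimension is $\dim G-10=12-10=2$. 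Since $|\mathcal E|$ is base-point free and $F_2(X)\ne\varnothing$ (e.g.\ for the Fermat cubic), Bertini gives that for general $f$ the scheme $F_2(X)$ is smooth of pure dimension $2$; for irreducibility I would exhibit one smooth $X_0$ with $F_2(X_0)$ smooth and connected, observe that over the locus of the space of cubics where the incidence variety $\{(X,[P]):P\subset X\}$ has two-dimensional fibres the projection is flat with irreducible source, so that $h^0(\mathcal O_{F_2(X)})$ is upper semicontinuous and equal to $1$ near $[X_0]$, and conclude smooth $+$ connected $\Rightarrow$ irreducible. One also records $K_{F_2(X)}=(K_G+c_1(\mathcal E))|_{F_2(X)}=(-7+10)\sigma_1|_{F_2(X)}=3\,\sigma_1|_{F_2(X)}$, so $F_2(X)$ is a minimal surface of general type. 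The arithmetic input $q(F_2(X))=h^{1,0}(F_2(X))=21$ I would obtain from the Koszul resolution $\bigwedge^\bullet\mathcal E^\vee\to\mathcal O_{F_2(X)}$ on $G$, tensored with $\mathcal O_G$ and with $\Omega^1_G$, by decomposing each term into Schur functors of $\mathcal S^\vee$ and $\mathcal Q^\vee$ and applying Bott's theorem term by term; since $h^{3,0}(F_2(X))=0$ trivially, this yields $b_3(F_2(X))=2q(F_2(X))=42=b_5(X)$ and $H^3(F_2(X))=H^{2,1}\oplus H^{1,2}$, a level-one Hodge structure.

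For $[P]\in F_2(X)$ one has $T_{[P]}F_2(X)=H^0(P,N_{P/X})$, fitting in
\[
0\longrightarrow H^0(P,N_{P/X})\longrightarrow H^0(P,N_{P/\mathbb P^6})\xrightarrow{\;\cdot\,\partial f\;}H^0(P,\mathcal O_P(3)),
\]
with $H^0(P,N_{P/\mathbb P^6})$ of dimension $12$ and $H^0(P,\mathcal O_P(3))$ of dimension $10$. Since $h^{5,0}(X)=h^{4,1}(X)=0$ we have $F^3H^5(X)=H^{3,2}(X)$, hence $T_0J^5(X)=H^{2,3}(X)$ and $d\Phi_{\mathcal P,[P]}\colon H^0(P,N_{P/X})\to H^{2,3}(X)$. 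Dualising, and using Griffiths' identification $H^{3,2}(X)\cong R^2$ where $R=\mathbb C[x_0,\dots,x_6]/(\partial_0f,\dots,\partial_6f)$ is the Jacobian ring (Gorenstein, socle degree $7$), one can write $(d\Phi_{\mathcal P,[P]})^\vee$ explicitly from the general Griffiths-residue formula for the Abel--Jacobi derivative: a quadric $A\in R^2$ goes to the functional on $H^0(P,N_{P/X})$ sending $\nu$, lifted to a tuple of linear forms on $P$, to a residue built out of $A|_P$ and the $\partial_if|_P$. The immersion assertion is exactly that the resulting map $R^2\to H^0(P,N_{P/X})^\vee$ is surjective for every plane $P\subset X$ when $X$ is general, equivalently that the holomorphic $1$-forms of $F_2(X)$ separate tangent vectors at every point.

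I would establish this surjectivity by translating it into a statement about syzygies of the ideal $(f|_P,\partial_3f|_P,\dots,\partial_6f|_P)\subset\mathbb C[x_0,x_1,x_2]$ (taking $P=\{x_3=\cdots=x_6=0\}$): a failure would produce a nonzero $\nu\in H^0(P,N_{P/X})$ annihilated by all quadrics, i.e.\ an unexpected relation among the $\partial_if|_P$, which one checks no plane of a general $X$ carries. Granting it, $\Phi_{\mathcal P}$ is an immersion; writing $\Phi_{\mathcal P}=\phi_{\mathcal P}\circ\alb_{F_2(X)}$ up to translation, $\phi_{\mathcal P}\colon\Alb(F_2(X))\to J^5(X)$ is a homomorphism of abelian varieties ($J^5(X)$ being principally polarised, as recalled in the introduction), induced on $H^3$ by the codimension-$3$ correspondence $q_*p^*$ and hence a morphism of integral polarisable Hodge structures up to a Tate twist. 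Since $d\Phi_{\mathcal P}\ne0$ we get $\phi_{\mathcal P}\ne0$, and for general $X$ the Hodge structure $H^5(X)$ is simple --- the monodromy of the universal family of smooth cubic fivefolds acting irreducibly on $H^5$ (Beauville) --- so $\phi_{\mathcal P}$ is surjective. (Alternatively, surjectivity of $q_*p^*$ can be obtained geometrically via a Lefschetz pencil of cubic-fourfold hyperplane sections and the Beauville--Donagi description of their middle cohomology by lines, which relate to the planes of $X$.) With $\dim\Alb(F_2(X))=q(F_2(X))=21=\dim J^5(X)$, $\phi_{\mathcal P}$ is an isogeny and $q_*p^*\otimes\mathbb Q$ is an isomorphism on $H^3$. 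Finally, $q_*p^*$ intertwines the principal polarisation of $J^5(X)$ with the polarisation of $\Alb(F_2(X))$ induced by an ample class of $F_2(X)$ up to a positive rational scalar; being a rational isomorphism onto the unimodular, torsion-free lattice $H^5(X,\mathbb Z)$, that scalar is forced to be $\pm1$, so $q_*p^*\colon H^3(F_2(X),\mathbb Z)_{/\torsion}\to H^5(X,\mathbb Z)$ is an isometric isomorphism of integral Hodge structures and $\phi_{\mathcal P}$ an isomorphism of abelian varieties.

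The crux is the infinitesimal computation: writing down the Abel--Jacobi differential explicitly via the normal bundle $N_{P/X}$ and the Jacobian ring, and proving its (co)injectivity along \emph{every} plane of a general $X$ --- a statement of infinitesimal-Torelli flavour and the one genuine use of genericity. Second in difficulty is the degree-one normalisation at the very end; the smoothness, irreducibility, Hodge-number and simplicity inputs are comparatively routine.
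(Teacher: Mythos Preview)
The paper does not prove this theorem: it is quoted from Collino \cite{Coll_cub} as background, so there is no ``paper's own proof'' to compare your sketch against. That said, several ingredients of your outline do appear in the paper independently. The realisation of $F_2(X)$ as the zero locus of a section of $\Sym^3\mathcal E_3$ on $G(3,V)$, the adjunction $K_{F_2(X)}\simeq\mathcal O_{G(3,V)}(3)|_{F_2(X)}$, and the computation $h^{1,0}(F_2(X))=21$ via the Koszul resolution and Borel--Weil--Bott are carried out exactly as you propose (Section~2 and Theorem~\ref{thm_descrip_h_1_and_wedge}); connectedness of $F_2(X)$ for \emph{every} smooth $X$ is again cited from \cite{Coll_cub} rather than reproved. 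Your plan to identify the differential of $\Phi_{\mathcal P}$ with a Jacobian-ring pairing is in the spirit of Collino and is consistent with the description $H^0(\Omega_{F_2(X)})\simeq \Sym^2V^*/\Im(\varphi_{\eq_X})$ obtained in Proposition~\ref{prop_descrip_h_0_omega}.

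Where your sketch has a genuine gap is the final step, upgrading the isogeny to an isomorphism. Knowing that $\phi_{\mathcal P}$ is a surjective homomorphism between $21$-dimensional abelian varieties only gives an isogeny, and your claim that the scalar relating the two polarisations is forced to be $\pm 1$ is not justified: a degree-$n^2$ isogeny onto a principally polarised abelian variety pulls the principal polarisation back to $n^2$ times a principal one, and nothing in your setup normalises the (a priori non-principal) polarisation on $\Alb(F_2(X))$ so as to exclude this. Equivalently, a $\mathbb Q$-isomorphism of lattices landing in a unimodular lattice can perfectly well have image of finite index $>1$. Collino settles the integral statement directly, by analysing the cylinder map rather than by a polarisation comparison; to close the gap you would need either to reproduce that analysis or to supply an alternative mechanism, for instance a degeneration to a cubic for which the integral isomorphism can be verified by hand and then propagated by the local constancy of $R^3r_{\univ,*}\mathbb Z$ and $R^5\mathbb Z$.
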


In the present note, we investigate some additional properties of $F_2(X)$.

In the first section, we establish the following cotangent bundle exact sequence. 

\begin{theoreme}\label{thm_1} Let $X\subset \mathbb P(V^*)$ be a smooth cubic $5$-fold for which $F_2(X)$ is a smooth irreducible surface. Then the cotangent bundle $\Omega_{F_2(X)}$ fits in the exact sequence
  \begin{equation}\label{ex_seq_tgt_bundle_seq}
    0\longrightarrow \mathcal Q_{3}^*|_{F_2(X)}\longrightarrow {\Sym}^2\mathcal E_{3}|_{F_2(X)}\longrightarrow \Omega_{F_2(X)}\longrightarrow 0, 
\end{equation} 
where the tautological rank $3$ quotient bundle $\mathcal E_3$ and the other bundle appear in the exact sequence
\begin{equation}\label{ex_seq_def_taut_3} 0\longrightarrow \mathcal Q_3\longrightarrow V^*\otimes \mathcal O_{G(3,V)}\longrightarrow \mathcal E_3\longrightarrow 0
\end{equation}
and the first map $($of \eqref{ex_seq_tgt_bundle_seq}$)$ is the contraction with an equation ${\eq}_X\in {\Sym}^3V^*$ defining $X$, \textit{i.e.}\
for any $[P]\in F_2(X)$, $v\mapsto {\eq}_X(v,\cdot,\cdot)|_{P}$.
\end{theoreme}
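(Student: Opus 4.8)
The plan is to obtain \eqref{ex_seq_tgt_bundle_seq} as the connecting sequence of the snake lemma applied to the Iliev--Manivel Lagrangian immersion of $F_2(X)$ into the variety of lines of a hyperplane section of $X$. First I would record the two ambient tangent bundle sequences. Restriction of $\eq_X\in\Sym^3V^*$ to the planes gives a section $s\in H^0(G(3,V),\Sym^3\mathcal E_3)$ with zero locus $F_2(X)$; since $F_2(X)$ is assumed smooth it has the expected codimension, $s$ is a regular section, $N_{F_2(X)/G(3,V)}=\Sym^3\mathcal E_3|_{F_2(X)}$, and, using $T_{G(3,V)}=\mathcal Q_3^*\otimes\mathcal E_3$, one gets $0\to T_{F_2(X)}\to\mathcal Q_3^*\otimes\mathcal E_3|_{F_2(X)}\xrightarrow{\,ds\,}\Sym^3\mathcal E_3|_{F_2(X)}\to 0$. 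A local computation (choosing coordinates) identifies $ds$ with the composite $\mathcal Q_3^*\otimes\mathcal E_3\xrightarrow{\mu_X\otimes\id}\Sym^2\mathcal E_3\otimes\mathcal E_3\to\Sym^3\mathcal E_3$, the second arrow being multiplication, where $\mu_X\colon\mathcal Q_3^*|_{F_2(X)}\to\Sym^2\mathcal E_3|_{F_2(X)}$ is the contraction $v\mapsto\eq_X(v,\cdot,\cdot)|_P$ — well defined on the quotient $\mathcal Q_3^*$ of $V\otimes\mathcal O$ (dual to \eqref{ex_seq_def_taut_3}) \emph{precisely because} $\eq_X|_P=0$. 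The same discussion applies to the variety of lines $F_1(Y)$ of a smooth cubic fourfold $Y\subset\mathbb P^5$, cut out in $G(2,6)$ by the section of $\Sym^3\mathcal E_2$ attached to $\eq_Y$: one has $N_{F_1(Y)/G(2,6)}=\Sym^3\mathcal E_2|_{F_1(Y)}$, and $ds_Y$ factors through $\mu_Y\colon\mathcal Q_2^*|_{F_1(Y)}\to\Sym^2\mathcal E_2|_{F_1(Y)}$, the contraction with $\eq_Y$.

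Next I would set up the Lagrangian picture. Pick a general $h\in V^*\setminus\{0\}$, put $H:=\{h=0\}$ and $Y:=X\cap H$; then $Y$ is a smooth cubic fourfold, $F_1(Y)$ is a smooth holomorphic symplectic fourfold (Beauville--Donagi), and by a dimension count no plane of $X$ is contained in $H$, so $\jmath\colon F_2(X)\to F_1(Y)$, $[P]\mapsto[P\cap H]$, is a morphism; by the Iliev--Manivel remark it is a closed embedding onto a Lagrangian subvariety, so contracting the symplectic form of $F_1(Y)$ gives a canonical isomorphism $N_{F_2(X)/F_1(Y)}\cong\Omega_{F_2(X)}$. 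Restriction of linear forms from $\mathbb P(V^*)$ to $H$, i.e.\ the surjection $V^*\to V^*/\mathbb Ch$, induces over $F_2(X)$ an isomorphism $\mathcal Q_3|_{F_2(X)}\isor\jmath^*\mathcal Q_2$ (a form vanishing on $P$ restricts to one vanishing on $P\cap H$; this is injective because $h$ does not vanish on $P$, and surjective because a form on $P$ vanishing on $P\cap H$ is a multiple of $h|_P$) together with a short exact sequence $0\to\mathcal O_{F_2(X)}\xrightarrow{h|_P}\mathcal E_3|_{F_2(X)}\to\jmath^*\mathcal E_2\to 0$ in which $h|_P$ is a nowhere vanishing section. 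Hence $\jmath^*(\mathcal Q_2^*\otimes\mathcal E_2)=\mathcal Q_3^*\otimes(\mathcal E_3/\mathcal O)|_{F_2(X)}$ and $\jmath^*\Sym^3\mathcal E_2=\Sym^3(\mathcal E_3/\mathcal O)|_{F_2(X)}$.

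Then I would run the snake lemma. Under the identifications above, the differential of $[P]\mapsto[P\cap H]$ is the map $\alpha\colon\mathcal Q_3^*\otimes\mathcal E_3\to\mathcal Q_3^*\otimes(\mathcal E_3/\mathcal O)$ obtained by post-composing with $\mathcal E_3\twoheadrightarrow\mathcal E_3/\mathcal O$; it is surjective with kernel $\mathcal Q_3^*\otimes\mathcal O\cong\mathcal Q_3^*$. Likewise $\mathcal E_3\twoheadrightarrow\mathcal E_3/\mathcal O$ induces a surjection $\beta\colon\Sym^3\mathcal E_3\to\Sym^3(\mathcal E_3/\mathcal O)$ with kernel $h\cdot\Sym^2\mathcal E_3\cong\Sym^2\mathcal E_3$. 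Because $\eq_Y=\eq_X|_H$, one has $\beta\circ ds=\jmath^*(ds_Y)\circ\alpha$, so $(\alpha,\beta)$ is a morphism from the tangent bundle sequence of $F_2(X)\subset G(3,V)$ to the $\jmath$-pullback of that of $F_1(Y)\subset G(2,6)$ (all bundles restricted to $F_2(X)$), inducing $d\jmath\colon T_{F_2(X)}\to\jmath^*T_{F_1(Y)}$ on the left. The snake lemma, together with $\Ker d\jmath=0$, $\coker d\jmath=N_{F_2(X)/F_1(Y)}\cong\Omega_{F_2(X)}$, $\Ker\alpha\cong\mathcal Q_3^*$, $\Ker\beta\cong\Sym^2\mathcal E_3$ and $\coker\alpha=\coker\beta=0$, then yields exactly $0\to\mathcal Q_3^*|_{F_2(X)}\xrightarrow{\partial}\Sym^2\mathcal E_3|_{F_2(X)}\to\Omega_{F_2(X)}\to 0$. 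It remains to identify $\partial$: for $w\in\mathcal Q_3^*$, viewed as $w\otimes h\in\Ker\alpha$, one has $ds(w\otimes h)=(\mu_X(w))\cdot h\in h\cdot\Sym^2\mathcal E_3=\Ker\beta$, and dividing by $h$ returns $\mu_X(w)$; thus $\partial=\mu_X$ is the contraction with $\eq_X$.

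The routine parts left implicit are the three local verifications that $ds=\mathrm{mult}\circ(\mu_X\otimes\id)$, that the differential of the hyperplane-section map is the quotient map $\alpha$, and that $\beta\circ ds=\jmath^*(ds_Y)\circ\alpha$; all of these follow by taking $h$ to be a coordinate and restricting the Jacobian data of $X$ to $H$. The genuine input — and the point that makes the argument work, rather than merely exhibiting \emph{some} rank $2$ quotient of $\Sym^2\mathcal E_3|_{F_2(X)}$ — is the Iliev--Manivel fact that $\jmath$ realizes $F_2(X)$ as a Lagrangian subvariety, which is what forces $\coker d\jmath\cong\Omega_{F_2(X)}$; so that is the step I expect to lean on most heavily (and, if one wanted to avoid it, to reprove directly by checking $\jmath^*\omega=0$ whenever $F_2(X)$ is smooth).
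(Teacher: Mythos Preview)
Your overall strategy is the same as the paper's—compare the tangent sequence of $F_2(X)\subset G(3,V)$ with the pullback of that of $F_1(Y)\subset G(2,\langle H\rangle)$ via the Iliev--Manivel map, and invoke the Lagrangian property to identify the cokernel with $\Omega_{F_2(X)}$—and in fact you streamline it nicely: by running the snake lemma directly on $\jmath\colon F_2(X)\to F_1(Y)$ you bypass the paper's detour through $\mathbb P_{F_2}\to F_1(X)$ and the separate verification that $\mathcal O_e(1)|_{Z_H}$ is trivial.

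There is, however, a genuine gap: your claim that $\jmath$ is a \emph{closed embedding} onto a Lagrangian subvariety is false. The paper points this out explicitly (the preprint of \cite{Iliev-Manivel_cub_hyp_int_syst} asserted it, but the published version retracts it, as noted in \cite{Huy_cub}): for general $H$, $\jmath$ is only an immersion that is an isomorphism onto its image away from finitely many points, and $\jmath(F_2(X))\subset F_1(Y)$ is a non-normal Lagrangian surface. So you cannot extract $\Ker d\jmath=0$ and $\coker d\jmath\cong\Omega_{F_2(X)}$ from that citation alone. Both statements are true but need separate arguments. For injectivity of $d\jmath$: your own snake lemma gives $\Ker d\jmath=\Ker\mu_X$, and $\mu_X$ is fiberwise the map $V/\langle P\rangle\to\Sym^2\langle P\rangle^*$, $v\mapsto\eq_X(v,\cdot,\cdot)|_P$, which is injective precisely by Collino's smoothness criterion (linear independence of the quadrics $Q_0,\dots,Q_3$; this is Proposition~\ref{prop_result_Collino_smoothness} in the paper, and is exactly how the paper establishes the immersion property). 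For the normal-bundle identification: once $d\jmath$ is injective, $\jmath^*\omega$ is a holomorphic $2$-form on $F_2(X)$ vanishing on the dense open where $\jmath$ is an isomorphism onto a Lagrangian, hence vanishes identically; contraction with $\omega$ then gives a map $\coker d\jmath\to\Omega_{F_2(X)}$ between locally free sheaves of rank $2$ which is an isomorphism on that dense open, hence everywhere. With these two repairs your argument goes through, and is arguably cleaner than the paper's.
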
  

Classically associated to the Albanese map $\alb_{F_2}\colon F_2(X)\rightarrow \Alb(F_2(X))$ of $F_2(X)$, there is the Gauss map 
\begin{alignat*}{2}
  \mathcal G\colon & \alb_{F_2}(F_2(X))  &\ \longdashrightarrow \ & G\left(2, T_{\Alb(F_2(X)),0}\right)\\
  & t &\ \longmapsto \ & T_{\alb_{F_2}(F_2(X))-t,0}
\end{alignat*}
where $\alb_{F_2}(F_2(X))-t$ designates the translation of $\alb_{F_2}(F_2(X))\subset {\Alb}(F_2(X))$ by $-t\in {\Alb}(F_2(X))$. The map $\mathcal G$ is defined on the smooth locus of $\alb_{F_2}(F_2(X))$.

 In the second section of the note, we prove the following. 

\begin{theoreme}\label{thm_gauss_map} The Albanese map is an embedding. In particular, the Gauss map is defined everywhere.
Moreover, $\mathcal G$ is an embedding, and its composition with the Pl\"ucker embedding
  $$
  G\left(2,_{\Alb(F_2(X)),0}\right)\simeq G\left(2,H^0\left(\Omega_{F_2}\right)^*\right)\subset \mathbb P\left(\bigwedge^2 H^0\left(\Omega_{F_2(X)}\right)^*\right)
  $$
  is the composition of the degree $3$ Veronese of the natural embedding $F_2(X)\subset G(3,V)\subset \mathbb P(\bigwedge^3V^*)$ followed by a linear projection.
\end{theoreme}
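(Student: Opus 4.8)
The first assertion and the bundle bookkeeping follow directly from Theorem~\ref{thm_1}. Since $\mathcal E_3$ is a quotient of $V^*\otimes\mathcal O_{G(3,V)}$, the bundle $\Sym^2\mathcal E_3|_{F_2(X)}$ is globally generated, and \eqref{ex_seq_tgt_bundle_seq} presents $\Omega_{F_2(X)}$ as a quotient of it; hence $\Omega_{F_2(X)}$ is globally generated and $\alb_{F_2}$ is an immersion. Taking determinants in \eqref{ex_seq_tgt_bundle_seq} and \eqref{ex_seq_def_taut_3}, and writing $L:=\det\mathcal E_3=\mathcal O_{G(3,V)}(1)$ for the Pl\"ucker polarisation, one gets $\omega_{F_2(X)}\cong\det(\Sym^2\mathcal E_3)\otimes\det\mathcal Q_3|_{F_2(X)}\cong L^{\otimes3}|_{F_2(X)}$; this is the line bundle underlying every morphism in the statement.

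I next isolate $H^0(\Omega_{F_2(X)})$ and the linear system of the Gauss map. Running the long exact sequence of \eqref{ex_seq_tgt_bundle_seq}, computing the relevant cohomology on $G(3,V)$ via Borel--Weil--Bott together with the Koszul resolution of $\mathcal O_{F_2(X)}$ (recall $F_2(X)$ is the zero locus of the section of $\Sym^3\mathcal E_3$ induced by $\eq_X$), and using that Collino's theorem and Griffiths' description of $H^{3,2}(X)$ force $h^0(\Omega_{F_2(X)})=21=\dim\bigl(\Sym^2V^*/J(\eq_X)_2\bigr)$, I expect to obtain that the natural map $\Sym^2V^*\to H^0(\Omega_{F_2(X)})$ is surjective with kernel the degree-two part $J(\eq_X)_2$ of the Jacobian ideal. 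By definition $\mathcal G\circ\alb_{F_2}$ is the classifying morphism of the surjection $H^0(\Omega_{F_2(X)})\otimes\mathcal O_{F_2(X)}\twoheadrightarrow\Omega_{F_2(X)}$, so post-composed with the Pl\"ucker embedding of $G(2,H^0(\Omega_{F_2(X)})^*)$ it is the morphism attached to $\bigwedge^2\Omega_{F_2(X)}=\omega_{F_2(X)}=L^{\otimes3}|_{F_2(X)}$ and to the linear system $\mathcal L:=\mathrm{Im}\bigl(\bigwedge^2 H^0(\Omega_{F_2(X)})\to H^0(\omega_{F_2(X)})\bigr)=\mathrm{Im}\bigl(\bigwedge^2\Sym^2V^*\to H^0(F_2(X),L^{\otimes3})\bigr)$, the last equality because $\Sym^2V^*\to H^0(\Omega_{F_2(X)})$ is onto. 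Via \eqref{ex_seq_tgt_bundle_seq} this morphism is visibly $[P]\mapsto[\,I_P(2)+J(\eq_X)_2\,]$, sending a plane to the space of quadrics whose restriction to $P$ lies in the span of the polars of $\eq_X$.

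The Veronese description then amounts to the inclusion $\mathcal L\subseteq\mathrm{Im}\bigl(\Sym^3(\bigwedge^3V^*)\to H^0(F_2(X),L^{\otimes3})\bigr)$: since $G(3,V)$ is projectively normal in its Pl\"ucker embedding, the right-hand side is the image of the restriction map $H^0(G(3,V),L^{\otimes3})\to H^0(F_2(X),L^{\otimes3})$, and the inclusion says exactly that $\mathcal G\circ\alb_{F_2}$ followed by the Pl\"ucker embedding factors as the cubic Veronese of $F_2(X)\subset G(3,V)\subset\mathbb P(\bigwedge^3V^*)$ followed by a linear projection (the projection being everywhere defined because $\mathcal L$ is base-point-free). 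I would obtain the inclusion by proving that this restriction map is surjective, i.e.\ that $H^1\bigl(G(3,V),\mathcal I_{F_2(X)}\otimes L^{\otimes3}\bigr)=0$; plugging the Koszul resolution of $\mathcal O_{F_2(X)}$ — whose terms are the bundles $\bigwedge^{i}(\Sym^3\mathcal E_3)^*$, $0\le i\le10$ — into Borel--Weil--Bott on $G(3,V)$ reduces this to a finite list of acyclicity statements for the twisted homogeneous bundles $\bigwedge^{i}(\Sym^3\mathcal E_3)^*\otimes L^{\otimes3}$. I expect this weight bookkeeping to be the main technical point of this half of the proof.

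It remains to prove the embedding statements. By the identifications above, $\gamma:=\mathcal G\circ\alb_{F_2}$ is everywhere defined and equals the morphism $[P]\mapsto[\,I_P(2)+J(\eq_X)_2\,]$; I would show it is a closed embedding in two steps. For injectivity one reconstructs the plane $P$ (equivalently $W\in G(3,V)$) from the subspace $I_P(2)+J(\eq_X)_2\subset\Sym^2V^*$ — for instance by singling out inside it the quadrics of rank $\le2$ containing $P$, or by intersecting it with suitable translates of $J(\eq_X)_2$ — a step that uses the geometry of $X$ rather than a dimension count and is, I believe, the genuine obstacle here. For unramifiedness one identifies $d\gamma_{[P]}$ with the map induced by the second fundamental form of $\alb_{F_2}(F_2(X))$, which by Theorem~\ref{thm_1} is governed by contraction with $\eq_X$, and checks from \eqref{ex_seq_tgt_bundle_seq} that no nonzero tangent vector lies in its kernel. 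Once $\gamma$ is a closed embedding it follows formally that $\alb_{F_2}$ is injective, hence — being already an immersion — a closed embedding; that $\alb_{F_2}(F_2(X))$ is smooth, so $\mathcal G$ is defined everywhere; and that $\mathcal G=\gamma\circ\alb_{F_2}^{-1}$ is a closed embedding. Together with the previous paragraph this gives the Veronese description of $\mathcal G$, completing the proof.
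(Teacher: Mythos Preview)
Your outline parallels the paper's closely in the preliminary steps (global generation of $\Omega_{F_2(X)}$ from Theorem~\ref{thm_1}, and the identification $H^0(\Omega_{F_2(X)})\simeq\Sym^2V^*/J_{X,2}$ via Koszul and Borel--Weil--Bott), but diverges at the two substantive points.

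For the Veronese description you propose to show that the restriction $H^0(G(3,V),L^{\otimes3})\to H^0(F_2(X),L^{\otimes3})$ is surjective. The paper takes a different route: it proves (Theorem~\ref{thm_descrip_h_1_and_wedge}) that the natural map $\bigwedge^2H^0(\Omega_{F_2(X)})\to H^0(K_{F_2(X)})$ is \emph{injective}, by running the Koszul spectral sequence for $H^2(\mathcal O_{F_2(X)})$ and identifying $\bigwedge^2H^1(\mathcal O_{F_2(X)})$ with the graded piece $E_\infty^{-6,8}$. Your surjectivity-of-restriction approach, even if the weight bookkeeping works out (and there are nonzero terms such as $H^4(\Gamma^{(5,0,-2)}\mathcal E_3^*)$ to contend with), does not by itself give this injectivity, which is what makes the target $\mathbb P(\bigwedge^2H^0(\Omega_{F_2(X)})^*)$ genuinely a linear \emph{projection} of the Veronese rather than a degenerate linear image.

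The real gap is in your embedding argument. You propose to recover $P$ from $I_P(2)+J_{X,2}$ by ``singling out rank $\le2$ quadrics'' or ``intersecting with translates of $J_{X,2}$'', and you anticipate that this ``uses the geometry of $X$ rather than a dimension count''. In fact the paper's argument \emph{is} a dimension count (Lemma~\ref{lem_jacobian_ideal} and Proposition~\ref{prop_rho_embedding}): one tabulates $\operatorname{codim}(L^2_{P,P'}\subset L^2_P)$ according to $\dim(P\cap P')$, and in the delicate case $P\cap P'=\text{line}$ one checks by hand that at least two of the partial derivatives $\partial\eq_X/\partial X_i$ lie in $L^2_{P,P'}$, forcing $\dim\bigl((J_{X,2}\cap L^2_P)+L^2_{P,P'}\bigr)\le 20<22=\dim L^2_P$. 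This produces a quadric $Q\in L^2_P$ with $Q|_{P'}\ne0$ modulo the relevant subspace, and wedging with a generic second quadric separates $[P]$ from $[P']$; a parallel argument with $T_{[P]}F_2(Q)\cap T_{[P]}F_2(X)=\{0\}$ handles tangent directions. Your rank-$\le2$ idea does not obviously survive the addition of $J_{X,2}$ (combinations of $I_P(2)$ and Jacobian quadrics can have low rank without determining $P$), and ``translates of $J_{X,2}$'' is not made precise enough to evaluate. This step is where the proof actually lives, and you would need to supply a concrete mechanism comparable to Lemma~\ref{lem_jacobian_ideal}.
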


The last section is concerned with some properties of the variety of osculating planes of a cubic $4$-fold, namely 
\begin{equation}\label{def_var_of_oscul_planes} 
  F_0(Z):=\{[P]\in G(3,H),\ \exists \ell\subset P\ \text{line\ s.t.}\ P\cap Z=\ell\ \text{(set-theoretically)}\},
\end{equation}
where $Z\subset \mathbb P(H^*)\simeq \mathbb P^5$ is a smooth cubic $4$-fold containing no plane.

This variety admits a natural projection to the variety of lines $F_1(Z)$ of $Z$ whose image (under that projection) has been studied, for example, in \cite{GK_geom_lines}. The interest of the authors there for the variety $F_0(Z)$ stems from its image in $F_1(Z)$ being the fixed locus of the Voisin self-map of $F_1(Z)$ (see \cite{Voisin_map}), a map that plays an important role in the understanding of algebraic cycles on the hyper-K\"ahler $4$-fold $F_1(Z)$ (see for example \cite{shen-vial}).

In \cite{GK_geom_lines}, it is proven that for $Z$ general, $F_0(Z)$ is a smooth irreducible surface, and some of its invariants are computed.

We compute some more invariants of $F_0(Z)$ using its link with the variety of planes $F_2(X_Z)$ of the associated cyclic cubic $5$-fold: to a smooth cubic $4$-fold $Z=\{\eq_Z=0\}\subset \mathbb P^5$, one can associate the cubic $5$-fold $X_Z=\{X_6^3+\eq_Z(X_0,\dots,X_5)\}$ which (by linear projection) is the degree $3$ cyclic cover of $\mathbb P^5$ ramified over $Z$. 

\begin{theoreme}\label{thm_sum_up_var_oscul_planes} For $Z$ general, $F_0(Z)$ is a smooth irreducible surface, and
  \begin{enumerate}
\item $F_2(X_Z)$ is a degree $3$ \'etale cover of $F_0(Z)$,
\item $b_1(F_0(Z))=0$, $h^2(\mathcal O_{F_0(Z)})=1070$, $h^1(\Omega_{F_0(Z)})=2207$,
\item $\Im(F_0(Z)\rightarrow F_1(Z))$ is a $($non-normal\,$)$ Lagrangian surface of\, $F_1(Z)$.
  \end{enumerate}
\end{theoreme}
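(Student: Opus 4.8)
The plan is to extract everything from the correspondence between planes of $X_Z$ and osculating planes of $Z$ afforded by the cyclic projection $\pi\colon X_Z\to\mathbb P(H^*)$. \emph{Step 1 (proof of (1)).} The center $o=[0:\dots:0:1]$ of $\pi$ is not on $X_Z$, so every plane $P'\subset X_Z$ avoids $o$ and is carried isomorphically by $\pi$ onto a plane $P=\pi(P')\subset\mathbb P(H^*)$, with $P'$ the graph $\{X_6=\ell\}$ of a linear form $\ell$ well defined modulo the linear ideal of $P$. The inclusion $P'\subset X_Z$ reads $(\ell|_P)^3=-\eq_Z|_P$, i.e.\ $\eq_Z|_P$ is a scalar times the cube of a linear form, equivalently $P\cap Z$ is a triple line: $[P]\in F_0(Z)$. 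Conversely, since $Z$ contains no plane one has $\eq_Z|_P\neq 0$ for $[P]\in F_0(Z)$, so its three cube roots produce exactly three planes $P'\subset X_Z$ over $P$, permuted simply transitively by the deck automorphism $\sigma\colon X_6\mapsto\zeta_3 X_6$; a $\sigma$-fixed plane would lie in $\{X_6=0\}\cap X_Z=Z$, impossible, so $\sigma$ acts freely on $F_2(X_Z)$. Hence $F_2(X_Z)\to F_2(X_Z)/\langle\sigma\rangle$ is an étale $\mathbb Z/3$-torsor, and the induced bijective morphism onto the normal surface $F_0(Z)$ (smooth by \cite{GK_geom_lines}) is an isomorphism; in particular $F_2(X_Z)$ is a smooth surface, a local complete intersection of codimension $10$ in $G(3,V)$.

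\emph{Step 2 (proof of (2)).} Writing $f\colon F_2(X_Z)\to F_0(Z)$ for the étale triple cover, one has $K_{F_2(X_Z)}=f^*K_{F_0(Z)}$, while cup products, topological and holomorphic Euler characteristics multiply by $3$ under $f$. The invariants of $F_2(X_Z)$ are the universal ones for cubic fivefolds: adjunction in $G(3,V)$ (equivalently \eqref{ex_seq_tgt_bundle_seq}) gives $K_{F_2(X_Z)}=\mathcal O_{G}(3)|_{F_2(X_Z)}$ and expresses $c_2(T_{F_2(X_Z)})$ through $\sigma_1$ and $c(\Sym^3\mathcal E_3)$, and since $[F_2(X_Z)]=c_{10}(\Sym^3\mathcal E_3)$ (a regular section), $K_{F_2(X_Z)}^2$ and $\chi_{\topp}(F_2(X_Z))$ reduce to Schubert calculus on $G(3,7)$, whence $\chi(\mathcal O_{F_2(X_Z)})$ by Noether; dividing by $3$ yields the corresponding invariants of $F_0(Z)$. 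It remains to prove $b_1(F_0(Z))=0$: as $f$ is a $\langle\sigma\rangle$-torsor, $H^1(F_0(Z),\mathbb Q)=H^1(F_2(X_Z),\mathbb Q)^{\langle\sigma\rangle}$, Poincaré dual to $H^3(F_2(X_Z),\mathbb Q)^{\langle\sigma\rangle}$. Using a $\sigma$-equivariant version of Collino's isomorphism $q_*p^*\colon H^3(F_2(X_Z),\mathbb Q)\isor H^5(X_Z,\mathbb Q)$ (the argument of \cite{Coll_cub} only needs $F_2(X_Z)$ to be a smooth surface) together with $H^5(X_Z,\mathbb Q)^{\langle\sigma\rangle}=H^5(\mathbb P^5,\mathbb Q)=0$ (invariant cohomology of a cyclic cover is that of the base), we get $H^3(F_2(X_Z),\mathbb Q)^{\langle\sigma\rangle}=0$, hence $b_1(F_0(Z))=0$; alternatively one computes the $\sigma$-character of $H^0(\Omega^1_{F_2(X_Z)})$ directly from the $\sigma$-equivariant Koszul resolution of $\mathcal O_{F_2(X_Z)}$ on $G(3,V)$ (equivariance coming from the invariant equation $\eq_{X_Z}=X_6^3+\eq_Z$) and Bott vanishing, and reads off $h^{1,0}(F_0(Z))=h^0(\Omega^1_{F_2(X_Z)})^{\langle\sigma\rangle}=0$. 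With $b_1(F_0(Z))=0$, the value of $\chi(\mathcal O_{F_0(Z)})$ gives $h^2(\mathcal O_{F_0(Z)})$, and then $\chi_{\topp}(F_0(Z))=2+2h^2(\mathcal O_{F_0(Z)})+h^1(\Omega^1_{F_0(Z)})$ gives $h^1(\Omega^1_{F_0(Z)})$.

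\emph{Step 3 (proof of (3)).} The morphism $F_2(X_Z)\to F_1(Z)$, $P'\mapsto P'\cap\{X_6=0\}$ — well defined since $P'\cap\{X_6=0\}\subset X_Z\cap\{X_6=0\}=Z$ is a line — is $\langle\sigma\rangle$-invariant, so it factors through the natural map $\nu\colon F_0(Z)\to F_1(Z)$, $[P]\mapsto[(P\cap Z)_{\mathrm{red}}]$; thus $S_0:=\Im(F_0(Z)\to F_1(Z))=\Im(F_2(X_Z)\to F_1(Z))$. By \cite{GK_geom_lines}, $S_0$ is the fixed locus of Voisin's rational self-map $\varphi$ of the hyper-Kähler fourfold $F_1(Z)$; since $\varphi^*\omega=-2\,\omega$ for the holomorphic symplectic form $\omega$ (\cite{Voisin_map}), $\omega$ restricts to $0$ on the smooth locus of $S_0$, so $S_0$ is isotropic. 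As $\nu$ is generically injective (a general line of $S_0$ bounds a unique osculating plane), $\dim S_0=2=\tfrac12\dim F_1(Z)$, so $S_0$ is Lagrangian and $\nu$ is its normalization. Finally $S_0$ is not normal: $F_0(Z)$ is smooth and $\nu$ is birational, so a normal $S_0$ would force $\nu$ to be an isomorphism by Zariski's main theorem; one shows instead that $\nu$ fails to be injective (or immersive) along a curve — by exhibiting distinct osculating planes cut by a common line of $Z$, or by a numerical comparison of $c_2(F_0(Z))$ with the Euler number of a would-be smooth Lagrangian surface of class $[S_0]\in H^4(F_1(Z))$.

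\emph{Expected main obstacle.} The two delicate points are (i) the vanishing $b_1(F_0(Z))=0$ — either making Collino's Abel--Jacobi argument (and its $\sigma$-equivariance) available for the non-generic cubic $X_Z$, or carrying out the $\sigma$-equivariant Koszul/Bott computation on $G(3,7)$ — since this is what unlocks $h^2(\mathcal O_{F_0(Z)})$ and $h^1(\Omega^1_{F_0(Z)})$; and (ii) locating precisely where the normalization $\nu$ in (3) ceases to be an embedding, so as to establish non-normality rather than merely birationality.
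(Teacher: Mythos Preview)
Your approach to (1) matches the paper's (Proposition~\ref{prop_etale_cover_F_2_F_0}): projection from the center, cube roots of $\eq_Z|_P$, free $\mathbb Z/3$-action. For (2) the paper computes $\chi(\mathcal O_{F_0(Z)})$ directly from the Koszul resolution of $\mathcal O_{F_0(Z)}$ on $\Fl(2,3,H)$ via the bundle $\mathcal F$, rather than going through $F_2(X_Z)$ and dividing by $3$; both routes are fine. For $b_1=0$ the paper also reduces to Collino's isomorphism for $X_Z$ (extended by a local-system argument over the locus where $F_2(X)$ is smooth), but instead of taking $\sigma$-invariants of $H^5(X_Z)$ it observes that the composite Abel--Jacobi map $F_0(Z)\to J^5(X_Z)$ factors through the rational variety $G(3,H)$ (since $\pi^{-1}(P)=\Span(P,p_0)\cap X_Z$), hence is constant. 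Your $\sigma$-equivariant version is equally valid and perhaps cleaner.

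For (3), your Lagrangian argument via $\varphi^*\omega=-2\,\omega$ on the fixed locus is genuinely different from the paper, which instead carries out an explicit computation with the Iliev--Manivel formula for $\omega$ at a general point of $S_0$ (Proposition~\ref{prop_image_F_0}); your eigenvalue argument is shorter and more conceptual (one should just note that $\varphi$ is defined at the generic point of $S_0$, i.e.\ that the general line of $S_0$ is of the first type). The paper also establishes generic injectivity of $\nu$ and injectivity of $T\nu$ by explicit incidence-variety dimension counts, which you assert without argument.

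There is, however, a concrete error in your non-normality sketch. You propose to show that $\nu$ ``fails to be injective (or immersive) along a curve''. In fact the opposite holds: for general $Z$, $e_{F_0}=\nu$ is an immersion \emph{everywhere} and is injective outside a \emph{finite} set (this is what the paper proves, and \cite{GK_geom_lines} counts $3780$ non-normal isolated singularities of $S_0$). So your intended mechanism for non-normality does not exist; the correct mechanism is finitely many points of $F_0(Z)$ with the same image line. Your Zariski-main-theorem reduction is still the right framework---you just need to exhibit at least one line $\ell\subset Z$ admitting two distinct osculating planes (the dimension count on the incidence variety $I_2$ in the paper shows that the locus of such $\ell$ is $0$-dimensional and, by Harris--Tu type formulas, nonempty), rather than a one-parameter family.
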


\begin{remarque} As mentioned by the referee and Frank Gounelas, in \cite{GK_geom_lines}, it is proven  that $[\Im(F_0(Z)\rightarrow F_1(Z))]=21[F_1(Z\cap H)]$ in $\CH_2(F_1(Z))$, where $Z\cap H$ is a cubic $3$-fold obtained as a general hyperplane section, which implies that $[\Im(F_0(Z)\rightarrow F_1(Z))]$ is Lagrangian (see \cite[Lemma 6.4.5]{Huy_cub}, for example).
\end{remarque}

\section*{Acknowledgments}
I would like to thank Hsueh-Yung Lin for pointing me to the article \cite{Iliev-Manivel_cub_hyp_int_syst} some years ago. I would like to also thank  Pieter Belmans for explaining how to use Sage to decompose the tensor powers of $\mathcal E_3$ into irreducible modules and the anonymous referee for their remarks.

Finally, I am grateful to the gracious Lord for His care.

\section{Cotangent bundle exact sequence}
Let $X\subset \mathbb P(V^*)\simeq \mathbb P^6$ be a smooth cubic $5$-fold. Its variety of planes $F_2(X)\subset G(3,V)$ is the zero locus of the section of $\Sym^3\mathcal E_3$ (where $\mathcal E_3$ is defined by (\ref{ex_seq_def_taut_3})) induced by an equation $\eq_X\in H^0(\mathcal O_{\mathbb P^6}(3))$ of $X$.

Let us gather some basic properties of $F_2(X)$ before proving Theorem \ref{thm_1}.
 
It is proven in \cite[Proposition 1.8]{Coll_cub} that $F_2(X)$ is connected for any $X$, so that by Bertini-type theorems, for $X$ general, $F_2(X)$ is a smooth irreducible surface.

As such an $F_2(X)$ is cut out of $G(3,V)$ by a regular section of the rank $10$ vector bundle $\Sym^3\mathcal E_3$, the Koszul resolution says that the structure sheaf $\mathcal O_{F_2(X)}$ is quasi-isomorphic to the complex 
\begin{equation}\label{ex_seq_koszul_resol} 
0\longrightarrow \wedge^{10}\Sym^3\mathcal E_3^*\longrightarrow\wedge^9\Sym^3\mathcal E_3^*\longrightarrow \cdots\longrightarrow \Sym^3\mathcal E_3^*\longrightarrow \mathcal O_{G(3,V)}\longrightarrow 0, 
\end{equation}
where the differentials are given by the section of $\Sym^3\mathcal E_3$. By the adjunction formula, 
$$
K_{F_2(X)}\simeq K_{G(3,V)}\otimes \det(\Sym^3\mathcal E_{3}|_{F_2(X)})\simeq \mathcal O_{G(3,V)}(3)|_{F_2(X)}:=\mathcal O_{F_2(X)}(3).
$$

Theorem \ref{thm_Collino_intro} (see also Theorem \ref{thm_descrip_h_1_and_wedge} below) implies that $h^{1,0}(F_2(X))=h^0(\Omega_{F_2(X)})= h^{2,3}(X)=21$, and we can use software to compute the other Hodge numbers (see also \cite{gammel}). We use the package Schubert2 of Macaulay2:
\begin{enumerate}
\item The Koszul resolution of $\mathcal O_{F_2(X)}$ gives $\chi(\mathcal O_{F_2(X)})=\sum_{i=0}^{10}(-1)^i\chi(\wedge^i\Sym^3\mathcal E_3^*)$. We can get the result $\chi(\mathcal O_{F_2(X)})=3213$ using the following code: 
\begin{verbatim}
loadPackage "Schubert2"
G=flagBundle{4,3}
(Q,E)= bundles G
F=symmetricPower(3,dual(E))
chi(exteriorPower(0,F))-chi(exteriorPower(1,F))+chi(exteriorPower(2,F))
-chi(exteriorPower(3,F))+chi(exteriorPower(4,F))-chi(exteriorPower(5,F))
+chi(exteriorPower(6,F))-chi(exteriorPower(7,F))+chi(exteriorPower(8,F))
-chi(exteriorPower(9,F))+chi(exteriorPower(10,F))
\end{verbatim}
Then we get $h^{0,2}(F_2(X))=\chi(\mathcal O_{F_2(X)})-1+h^{0,1}(F_2(X))=3233$.

\item Next, Noether's formula reads $\chi_{\topp}(F_2(X))=12\chi(\mathcal O_{F_2(X)})-\int_{F_2(X)}c_1(K_{F_2(X)})^2$, and as 
  $$
    \begin{aligned}
      \int_{F_2(X)}c_1\left(K_{F_2(X)}\right)^2 &=\int_{F_2(X)}c_1\left(\mathcal O_{G(3,V)}(3)|_{F_2(X)}\right)^2\\
&=\int_{G(3,V)}\left[F_2(X)\right]\cdot c_1\left(\mathcal O_{G(3,V)}(3)\right)^2\\
      &= 9\int_{G(3,V)}c_{10}\left(\Sym^3\mathcal E_3\right)\cdot c_1\left(\mathcal O_{G(3,V)}(1)\right)^2,
    \end{aligned}
  $$
the number $\int_{F_2(X)}c_1(K_{F_2(X)})^2=3^2\times 2835=25515$ can be obtained using the code
\begin{verbatim}
loadPackage "Schubert2"
G=flagBundle{4,3}
(Q,E)= bundles G
F=symmetricPower(3,E)
cycle=chern(1,exteriorPower(3,E))*chern(1,exteriorPower(3,E))*chern(10,F)
integral cycle
\end{verbatim}
 Then we get $b_2(F_2(X))=\chi_{\topp}(F_2(X))-2+2b_1(F_2(X))=13041-2+4\times 21=13123$ and $h^{1,1}(F_2(X))=b_2(F_2(X))-2h^{0,2}(F_2(X))=6657$.
\end{enumerate}

Associated to $X$, there is also its variety of lines $F_1(X)\subset G(2,V)$. It is a smooth Fano variety of dimension $6$ which is cut out by a regular section of $\Sym^3\mathcal E_2$, where $\mathcal E_2$ is the tautological rank $2$ quotient bundle appearing in an exact sequence
$$
0\longrightarrow \mathcal Q_2\longrightarrow V^*\otimes \mathcal O_{G(2,V)}\longrightarrow \mathcal E_2\longrightarrow 0.
$$
Let us examine the relation between the two auxiliary varieties by introducing the flag variety 
$$
\xymatrix{\Fl(2,3,V) \ar[d]_t\ar[r]^e &\Gr(2,V)\\
  \Gr(3,V), &}
$$
where $t\colon\Fl(2,3,V)\simeq \mathbb P(\wedge^2 \mathcal E_3)\rightarrow \Gr(3,V)$ and $e\colon\Fl(2,3,V)\simeq \mathbb P(\mathcal Q_2)\rightarrow \Gr(2,V)$. For the tautological quotient line bundles, we have $\mathcal O_{t}(1)\simeq e^*\mathcal O_{\Gr(2,V)}(1)$ and $\mathcal O_{e}(1)\simeq t^*\mathcal O_{\Gr(3,V)}(1)\otimes e^*\mathcal O_{\Gr(2,V)}(-1)$.

On $\Fl(2,3,V)$, the relation between the two tautological bundles is given by the exact sequence
\begin{equation}\label{ex_seq_taut_bundles_2_3}
0\longrightarrow e^*\mathcal O_{G(2,V)}(-1)\otimes t^*\mathcal O_{G(3,V)}(1)\longrightarrow t^*\mathcal E_3\longrightarrow e^*\mathcal E_2\longrightarrow 0. 
\end{equation}

We can restrict the flag bundle to get 
$$
\xymatrix{
  \mathbb P_{F_2}:=\mathbb P\left(\wedge^2\mathcal E_{3}|_{F_2(X)}\right)\ar[r]^(.65){e_{F_2}}\ar[d]_{t_{F_2}} &F_1(X)\\
  F_2(X)\rlap{.}&}
$$ 

We have the following property. 

\begin{proposition}\label{prop_immersion_plan_lines}
  The tangent map $T e_{F_2}$ of $e_{F_2}$ is injective; \textit{i.e.}, $e_{F_2}$ is an immersion. Moreover, the ``normal bundle'' $N_{\mathbb P_{F_2}/F_1(X)}:= e_{F_2}^*T_{F_1(X)}/T_{\mathbb P_{F_2}}$ of\, $\mathbb P_{F_2}$ admits the following description:
\begin{equation}\label{ex_seq_tgt_bundle_part1}0\longrightarrow t_{F_2}^*(\mathcal Q_{3}^*|_{F_2(X)})\otimes \mathcal O_e(1)\longrightarrow t_{F_2}^*\Sym^2\mathcal E_3\otimes \mathcal O_e(1)\longrightarrow N_{\mathbb P_{F_2}/F_1(X)}\longrightarrow 0. 
\end{equation}
\end{proposition}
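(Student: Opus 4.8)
The plan is to factor $e_{F_2}$ as a regular closed embedding followed by a smooth morphism, and then to read off both assertions from Theorem~\ref{thm_1}. Set $\mathcal W:=e^{-1}\!\left(F_1(X)\right)\subset\Fl(2,3,V)$. Since $F_1(X)$ is smooth — classical for smooth cubic hypersurfaces — and $e$ is a $\mathbb P^4$-bundle, $\mathcal W$ is smooth and $e_\mathcal W\colon\mathcal W\to F_1(X)$ is again a $\mathbb P^4$-bundle; moreover a plane contained in $X$ contains only lines of $X$, so $\mathbb P_{F_2}=t^{-1}\!\left(F_2(X)\right)$ is a closed subvariety of $\mathcal W$ and $e_{F_2}=e_\mathcal W|_{\mathbb P_{F_2}}$. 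The point is that $\mathcal W\to F_1(X)$ being smooth, $T e_{F_2}$ and its cokernel can be computed from the embedding $\mathbb P_{F_2}\hookrightarrow\mathcal W$ and the relative tangent bundle $T_e:=T_{\Fl(2,3,V)/\Gr(2,V)}$.

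First I would identify two bundles on $\mathbb P_{F_2}$. For $T_e$: on $\Fl(2,3,V)$ the tautological sub-bundles fit in $0\to t^*\mathcal Q_3\to e^*\mathcal Q_2\to\mathcal O_e(1)\to0$ (the surjection is the tautological quotient of $e\colon\mathbb P(\mathcal Q_2)\to\Gr(2,V)$; its kernel is $t^*\mathcal Q_3$, consistently with $\mathcal O_e(1)\simeq t^*\mathcal O_{\Gr(3,V)}(1)\otimes e^*\mathcal O_{\Gr(2,V)}(-1)$ by taking determinants), so the relative Euler sequence of $e$ gives $T_e\simeq t^*\mathcal Q_3^*\otimes\mathcal O_e(1)$ and hence $(T_e)|_{\mathbb P_{F_2}}\simeq t_{F_2}^*\!\left(\mathcal Q_3^*|_{F_2(X)}\right)\otimes\mathcal O_e(1)$. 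For $N_{\mathbb P_{F_2}/\mathcal W}$: applying $\Sym^3$ to \eqref{ex_seq_taut_bundles_2_3}, whose sub-bundle is the line bundle $\mathcal O_e(1)$, yields $0\to t^*\Sym^2\mathcal E_3\otimes\mathcal O_e(1)\to\Sym^3 t^*\mathcal E_3\to\Sym^3 e^*\mathcal E_2\to0$, and the pullback $t^*\sigma_X$ of the section $\sigma_X\in H^0(\Sym^3\mathcal E_3)$ cutting out $F_2(X)$ maps under the last surjection to the pullback $e^*\tau_X$ of the section $\tau_X\in H^0(\Sym^3\mathcal E_2)$ cutting out $F_1(X)$ (both come from $\eq_X$, the map being restriction of cubic forms from a plane to a line inside it). As $e^*\tau_X$ vanishes on $\mathcal W$, the section $t^*\sigma_X|_\mathcal W$ lifts uniquely to $s\in H^0\!\bigl(\mathcal W,(t^*\Sym^2\mathcal E_3\otimes\mathcal O_e(1))|_\mathcal W\bigr)$ with $Z(s)=\mathbb P_{F_2}$ as schemes (because $t^*\Sym^2\mathcal E_3\otimes\mathcal O_e(1)$ is a sub-bundle of $\Sym^3 t^*\mathcal E_3$). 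Since $\mathbb P_{F_2}$ is a $\mathbb P^2$-bundle over the smooth surface $F_2(X)$ it is pure of dimension $4=\dim\mathcal W-\rank\!\left(t^*\Sym^2\mathcal E_3\otimes\mathcal O_e(1)\right)$, so $s$ is a regular section and $N_{\mathbb P_{F_2}/\mathcal W}\simeq t_{F_2}^*\Sym^2\mathcal E_3\otimes\mathcal O_e(1)$.

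To conclude, I would note that $T e_{F_2}$ is the composite $T_{\mathbb P_{F_2}}\hookrightarrow T_\mathcal W|_{\mathbb P_{F_2}}\xrightarrow{T e_\mathcal W}e_{F_2}^*T_{F_1(X)}$, whose second arrow is surjective with kernel $(T_e)|_{\mathbb P_{F_2}}$. Comparing the two sub-bundles $T_{\mathbb P_{F_2}}$ and $(T_e)|_{\mathbb P_{F_2}}$ of $T_\mathcal W|_{\mathbb P_{F_2}}$, the composite $\phi\colon(T_e)|_{\mathbb P_{F_2}}\hookrightarrow T_\mathcal W|_{\mathbb P_{F_2}}\twoheadrightarrow N_{\mathbb P_{F_2}/\mathcal W}$ satisfies $\Ker\phi=T_{\mathbb P_{F_2}}\cap(T_e)|_{\mathbb P_{F_2}}=\Ker(T e_{F_2})$ and $\coker\phi=T_\mathcal W|_{\mathbb P_{F_2}}/(T_{\mathbb P_{F_2}}+(T_e)|_{\mathbb P_{F_2}})=\coker(T e_{F_2})=N_{\mathbb P_{F_2}/F_1(X)}$. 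Under the identifications above, $\phi$ is the ``covariant derivative of $s$ along the fibres of $e$'': I expect that differentiating $\sigma_X$ in the direction of moving the plane while keeping the line $\ell\subset P$ fixed, then dividing by the equation of $\ell$ (the way $s$ arises from $t^*\sigma_X$), produces exactly the contraction $v\mapsto\eq_X(v,\cdot,\cdot)|_P$, so that $\phi=\id_{\mathcal O_e(1)}\otimes(\text{contraction with }\eq_X)\colon t_{F_2}^*\!\left(\mathcal Q_3^*|_{F_2(X)}\right)\otimes\mathcal O_e(1)\to t_{F_2}^*\Sym^2\mathcal E_3\otimes\mathcal O_e(1)$. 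By Theorem~\ref{thm_1} this is a sub-bundle inclusion (it is the first map of \eqref{ex_seq_tgt_bundle_seq}, with locally free quotient $\Omega_{F_2(X)}$), hence $\phi$ is (fibrewise) injective; therefore $\Ker(T e_{F_2})=0$, i.e.\ $e_{F_2}$ is an immersion, and $N_{\mathbb P_{F_2}/F_1(X)}=\coker\phi$ fits in \eqref{ex_seq_tgt_bundle_part1} (so in fact $N_{\mathbb P_{F_2}/F_1(X)}\simeq t_{F_2}^*\Omega_{F_2(X)}\otimes\mathcal O_e(1)$).

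The step I expect to be the main obstacle is this final identification of $\phi$ with the contraction map: one has to carry the normal-bundle / covariant-derivative description carefully through the lift of $t^*\sigma_X$ to $s$ — that is, through the division by the equation of $\ell$ — and keep the $\mathcal O_e(1)$-twist straight throughout. Everything else is essentially forced by the ranks together with the sequences \eqref{ex_seq_taut_bundles_2_3} and \eqref{ex_seq_tgt_bundle_seq}.
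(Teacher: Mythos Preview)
Your overall architecture is correct and in fact very close to the paper's: the paper's part~(2) is exactly a snake-lemma computation identifying $\Ker(Te|_{\mathbb P_{F_2}})\simeq t_{F_2}^*\mathcal Q_3^*\otimes\mathcal O_e(1)$ and $\Ker(\overline{Te|_{\mathbb P_{F_2}}})\simeq t_{F_2}^*\Sym^2\mathcal E_3\otimes\mathcal O_e(1)$, which is your $\phi\colon (T_e)|_{\mathbb P_{F_2}}\to N_{\mathbb P_{F_2}/\mathcal W}$ in disguise; and the paper's part~(1) reduces to $H^0(N_{P/X}(-1))=0$, which is literally the fibrewise injectivity of the contraction $V/\langle P\rangle\to\Sym^2\langle P\rangle^*$, $v\mapsto\eq_X(v,\cdot,\cdot)|_P$. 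Your computation of $\phi$ as this contraction (twisted by $\mathcal O_e(1)$) is also correct: deforming $P$ to $P_t=\langle\ell\rangle\oplus\langle z+tv\rangle$ and dividing $\eq_X|_{P_t}$ by the equation of $\ell$ gives, to first order in $t$, the quadric $y\mapsto 3\,\eq_X(y,y,v)$ on $P$.

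The one genuine problem is logical circularity. You justify the injectivity of $\phi$ by invoking Theorem~\ref{thm_1}, but in this paper Theorem~\ref{thm_1} is \emph{derived from} Proposition~\ref{prop_immersion_plan_lines}: its proof uses the sequence \eqref{ex_seq_tgt_bundle_part1} together with the Lagrangian identification \eqref{isom_cotang_normal2}, both of which rest on the proposition you are trying to prove. The fix is immediate and is exactly what the paper does: prove the fibrewise injectivity of the contraction directly from Collino's smoothness criterion (Proposition~\ref{prop_result_Collino_smoothness}). In the normal form \eqref{normal_form_0}, the map $V/\langle P\rangle\to\Sym^2\langle P\rangle^*$ sends the class of $e_i$ ($i\le 3$) to $Q_i|_P$, and these are linearly independent precisely because $F_2(X)$ is smooth at $[P]$. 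Replace your appeal to Theorem~\ref{thm_1} by this one-line argument and your proof goes through; it then has the mild advantage over the paper's of handling (1) and (2) in a single stroke via $\phi$.
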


\begin{proof} (1)  Let us first prove that $e_{F_2}$ is an immersion. Let us recall the natural isomorphism between the two presentations of the tangent space of $\Fl(2,3,V)$: looking at $t$, we can write
  $$
  T_{\Fl(2,3,V), ([\ell],[P])}\simeq \Hom(\langle P\rangle,V/\langle P\rangle)\oplus \Hom(\langle \ell\rangle,\langle P\rangle/\langle \ell\rangle),
  $$
  and looking at $e$, we have
  $$
  T_{\Fl(2,3,V), ([\ell],[P])}\simeq \Hom(\langle\ell\rangle,V/\langle\ell\rangle)\oplus \Hom(\langle P\rangle/\langle\ell\rangle,V/\langle P\rangle),
  $$
  where we denote by $\langle K\rangle\subset V$ the linear subspace whose projectivisation is $K\subset \mathbb P(V^*)$. For a given decomposition $\langle P\rangle\simeq \langle\ell\rangle\oplus \langle P\rangle/\langle\ell\rangle$, the isomorphism takes the following form:  
\begin{align*}
  \Hom(\langle P\rangle,V/\langle P\rangle)\oplus \Hom(\langle\ell\rangle,\langle P\rangle/\langle\ell\rangle) &\ \longrightarrow \ \Hom(\langle\ell\rangle,V/\langle\ell\rangle)\oplus \Hom(\langle P\rangle/\langle\ell\rangle,V/\langle P\rangle).\\
(f,\ g) &\ \longmapsto \ \left(f|_{\langle \ell\rangle}+g,\ f|_{\langle P\rangle/\langle\ell\rangle}\right)
\end{align*}
Notice that, by definition, we have $\Im(f)\cap \Im(g)=\{0\}$, so that in proving that $T_{([\ell],[P])}e_{F_2}$ is injective, we can examine the two components separately. 

Now we have the exact sequence
$$
0\longrightarrow N_{\ell/P}\longrightarrow N_{\ell/X}\longrightarrow N_{P/X}|_{\ell}\longrightarrow 0,
$$
 from which we get
\begin{equation}
\label{ex_seq_normal_bdle_coh_lines}
0\longrightarrow \underset{\simeq \langle\ell\rangle^*}{H^0(\mathcal O_{\ell}(1))}\longrightarrow H^0(N_{\ell/X})\longrightarrow H^0(N_{P/X}|_{\ell})\longrightarrow 0=H^1(\mathcal O_\ell(1)),
\end{equation}
and we have $T_{F_1(X),[\ell]}\simeq H^0(N_{\ell/X})$.

A linear form on $P$ defining $\ell$ is given by any generator of $(\langle P\rangle/\langle\ell\rangle)^*\subset \langle P\rangle^*$, so that
$$
T_{\mathbb P(\wedge^2 \mathcal E_{3}|_{F_2(X)}),([\ell],[P])}\simeq \underbrace{T_{F_2(X),[P]}}_{\simeq H^0(N_{P/X})}\oplus \underbrace{\langle P\rangle^*/(\langle P\rangle/\langle\ell\rangle)^*}_{\simeq \langle\ell\rangle^*}.
$$
The second summand is readily seen to inject into $T_{F_1(X),\langle\ell\rangle}$ by (\ref{ex_seq_normal_bdle_coh_lines}).  

Next, we have the exact sequence
$$
0\longrightarrow N_{P/X}(-1)\longrightarrow N_{P/X}\longrightarrow N_{P/X}|_{\ell}\longrightarrow 0,
$$
which gives rise to 
\begin{equation}\label{ex_seq_normal-1_coh}
0\longrightarrow H^0\left(N_{P/X}(-1)\right)\longrightarrow H^0\left(N_{P/X}\right)\longrightarrow H^0\left(N_{P/X}|_{\ell}\right)\longrightarrow H^1\left(N_{P/X}(-1)\right)\longrightarrow H^1\left(N_{P/X}\right).
\end{equation}
To prove that $T_{([\ell],[P])}e_{F_2}$ is injective, it is thus sufficient to prove that $H^0(N_{P/X}(-1))=0$.

Consider the exact sequence 
\begin{equation}\label{ex_seq_nomal_plane} 0\longrightarrow N_{P/X}\longrightarrow \underbrace{N_{P/\mathbb P^6}}_{\simeq(V/\langle P\rangle)\otimes \mathcal O_P(1)} \overset{\alpha}{\longrightarrow} \underbrace{N_{X/\mathbb P^6}|_{P}}_{\simeq \mathcal O_P(3)}\longrightarrow 0.
\end{equation}
Up to a projective transformation, we can assume $P=\{X_0=\cdots =X_3=0\}$, so that $\eq_X$ has the following form:
\begin{equation}\label{normal_form_0}
    X_0Q_0 + X_1Q_1 + X_2Q_2 +X_3Q_3 + \sum_{i=4}^6X_iD_i(X_0,X_1,X_2,X_3) + R(X_0,X_1,X_2,X_3)
\end{equation}
where $R$ is a homogeneous cubic polynomial, the $D_i$, $4\leq i\leq 6$, are homogeneous quadratic polynomials in the variables $(X_k)_{k\leq 3}$ and the $Q_i$, $0\leq i\leq 3$, are homogeneous quadratic polynomials in $(X_i)_{4\leq i\leq 6}$. With this notation, $X$ is smooth along $P$ if and only if $\Span((Q_{i}|_{P})_{i=0,\dots,3})$ is base-point-free. We recall the following result found in \cite[Proposition 1.2 and Corollary 1.4]{Coll_cub}. 

\begin{proposition}\label{prop_result_Collino_smoothness} For $X$ smooth along $P$, the following properties are equivalent:
  \begin{enumerate}
  \item The variety $F_2(X)$ is smooth at $[P]$.
  \item The set $(Q_0,\dots,Q_3)$ is linearly independent.
  \item The map $H^0(\alpha)\colon H^0(N_{P/\mathbb P^6})\simeq (V/\langle P\rangle)\otimes H^0(\mathcal O_P(1))\rightarrow H^0(N_{X/\mathbb P^6}|_{P})\simeq H^0(\mathcal O_P(3))$, $(L_0,\dots,L_3)\mapsto \sum_iL_iQ_i$ is surjective.
    \end{enumerate}
\end{proposition}

Now tensoring (\ref{ex_seq_nomal_plane}) by $\mathcal O_P(-1)$, we get the  long exact sequence
 \begin{equation}\label{ex_seq_normal_plane-1_coh}
0\longrightarrow H^0(N_{P/X}(-1))\longrightarrow V/\langle P\rangle\xrightarrow{H^0(\alpha(-1))} H^0(\mathcal O_P(2))\longrightarrow H^1(N_{P/X}(-1))\longrightarrow 0=H^1(\mathcal O_P)^{\oplus 4}.
\end{equation}
The map $H^0(\alpha(-1))$ is given by the quadrics $(Q_0,\dots,Q_3)$. As $F_2(X)$ is smooth by assumption, the latter are linearly independent; thus $H^0(\alpha(-1))$ is injective; \textit{i.e.}, we have $H^0(N_{P/X}(-1))=0$. In particular, $H^0(N_{P/X})\subset H^0(N_{P/X}|_{\ell})$; hence, looking at (\ref{ex_seq_nomal_plane}) and (\ref{ex_seq_normal_bdle_coh_lines}), we see that $T_{([\ell],[P])}e_{F_2}$ is injective.

(2)  We want now to establish the exact sequence (\ref{ex_seq_tgt_bundle_part1}). Pulling back the natural exact sequence of locally free sheaves, we get the commutative diagram
$$\xymatrix{0 \ar[r] &T_{\mathbb P_{F_2}}\ar[r]\ar[d]^{Te_{F_2}} & T_{\Fl(2,3,V)}|_{\mathbb P_{F_2}}\ar[r]\ar[d]^{Te|_{\mathbb P_{F_2}}} &(t^*\Sym^3\mathcal E_3)|_{\mathbb P_{F_2}} \ar[r]\ar[d]^{\overline{Te|_{\mathbb P_{F_2}}}} &0\\
0 \ar[r] &e_{F_2}^*T_{F_1(X)} \ar[r] &e_{F_2}^*T_{\Gr(2,V)}|_{F_1(X)}\ar[r] &e_{F_2}^*\Sym^3\mathcal E_{2}|_{F_1(X)}\ar[r] &0\rlap{,}}$$
which by the snake lemma yields
$$
0 \longrightarrow \Ker\left(Te|_{\mathbb P_{F_2}}\right)\longrightarrow \Ker\left(\overline{Te|_{\mathbb P_{F_2}}}\right)\longrightarrow \coker\left(Te_{F_2}\right)\longrightarrow 0.
$$
By the definition of the normal bundle, we get $\coker(Te_{F_2})\simeq N_{\mathbb P_{F_2}/F_1(X)}$. The restriction of the  exact sequence of locally free sheaves
$$
0\longrightarrow T_{\Fl(2,3,V)/\Gr(2,7)}\longrightarrow T_{\Fl(2,3,V)}\longrightarrow e^*T_{\Gr(2,V)}\longrightarrow 0
  $$
   still being exact, we get $\ker(Te|_{\mathbb P_{F_2}})\simeq T_{\Fl(2,3,V)/\Gr(2,V)}|_{\mathbb P_{F_2}}$. The relative tangent bundle appears in the exact sequence:
  $$
   0\longrightarrow \mathcal O_{\Fl(2,3,V)}\longrightarrow e^*V/\mathcal E_2^*\otimes \mathcal O_e(1)\longrightarrow T_{\Fl(2,3,V)/\Gr(2,V)}\longrightarrow 0.
  $$
  The sequence (\ref{ex_seq_taut_bundles_2_3}) also yields
  $$
  0\longrightarrow t^*\mathcal O_{\Gr(3,V)}(-1)\otimes e^*\mathcal O_{\Gr(2,V)}(1)\longrightarrow V/\mathcal E_2^*\longrightarrow V/\mathcal E_3^*\longrightarrow 0,
  $$
  from which,  after twisting that last sequence by $\mathcal O_e(1)$, we get $T_{\Fl(2,3,V)/\Gr(2,V)}|_{\mathbb P_{F_2}}\simeq t_{F_2}^*V/\mathcal E_3^*\otimes \mathcal O_e(1)$.
 
  Next, taking the symmetric power of (\ref{ex_seq_taut_bundles_2_3}) we get the  exact sequence
  $$
  0\longrightarrow e^*\mathcal O_{\Gr(2,V)}(-1)\otimes t^*\mathcal O_{\Gr(3,V)}(1)\otimes t^*\Sym^2\mathcal E_3\longrightarrow t^*\Sym^3\mathcal E_3\longrightarrow e^*\Sym^3\mathcal E_2\longrightarrow 0,
    $$
  so that $\ker(\overline{Te|_{\mathbb P_{F_2}}})\simeq (e^*\mathcal O_{\Gr(2,V)}(-1)\otimes t^*\mathcal O_{\Gr(3,V)}(1)\otimes t^*\Sym^2\mathcal E_3)|_{\mathbb P_{F_2}}$. Putting everything together, we get the desired exact sequence.
\end{proof}

For any plane $P_0\subset X$, looking for example at the associated quadric bundle 
$$
\xymatrix{\widetilde{X_{P_0}}\ar[rd]^{\tilde\gamma}\ar@{^{(}->}[r] &\mathbb P(\mathcal E_4)\ar[d]^{\gamma}\\
  &B\rlap{,}}
$$
where $B\simeq \{[\Pi]\in G(4,V),\ P_0\subset \Pi\}\simeq \mathbb P^3$, $\mathcal E_4\simeq \langle P\rangle^*\otimes \mathcal O_{\mathbb P^3}\oplus \mathcal O_{\mathbb P^3}(1)$ and $\widetilde{X_{P_0}}\in |\mathcal O_{\gamma}(2)\otimes \gamma^*\mathcal O_{\mathbb P^3}(1)|$, we see that the locus of quadrics of rank at most $2$ has codimension (at most) $\binom{4-2+1}{2}=3$. Moreover, by the Harris--Tu formula (\cite[Theorem 1 and Theorem 10]{HT_degener}), there are (at least) $2\left|\begin{smallmatrix}
c_2(\mathcal E_4\otimes L) &c_3(\mathcal E_4\otimes L)\\
c_0(\mathcal E_4\otimes L) &c_1(\mathcal E_4\otimes L 
\end{smallmatrix}\right|=31$ of these quadrics (where $L$ has to be thought of as a formal square root of $\mathcal O_{\mathbb P^3}(1)$).

In particular, the locus $\Gamma=\{([\ell],[P])\in \mathbb P_{F_2},\ \exists [P']\neq [P],\ ([\ell],[P'])\in \mathbb P_{F_2}\}$ has codimension $2$ in $\mathbb P_{F_2}$ (above the general plane $[P]\in F_2(X)$, there are finitely many lines that belong to another planes $P'\subset X$).

To any hyperplane $H\subset \mathbb P(V^*)$ such that $Y:=X\cap H$ is a smooth cubic $4$-fold containing no plane, we can attach the morphism $j_H\colon F_2(X)\rightarrow F_1(Y)$ defined by $[P]\mapsto [P\cap H]$.

The subvariety $F_1(Y)\subset F_1(X)$ is the zero locus of the regular section of $\mathcal E_{2}|_{F_1(X)}$ induced by the equation of $H\subset \mathbb P(V^*)$. For any such $Y$ (containing no plane), $e^{-1}(F_1(Y))$ is obviously a section $Z_H$ of $\mathbb P_{F_2}\rightarrow F_2(X)$, $[P]\mapsto ([P\cap H],[P])$. The smooth surface $Z_H\simeq F_2(X)$ is thus the zero locus of a regular section of $e_{F_2}^*\mathcal E_{2}|_{F_1(X)}$. By Bertini-type theorems, for $H$ general, $Z_H\cap \Gamma$ is $0$-dimensional.

As a result, as noticed in \cite[Proposition 7]{Iliev-Manivel_cub_hyp_int_syst} (the published version corrects the preprint, in which it is wrongly claimed that $j_H$ is an embedding, as underlined in \cite{Huy_cub}), $j_H\colon Z_H\simeq F_2(X)\rightarrow F_1(Y)$ is isomorphic to its image outside a $0$-dimensional subset of $F_2(X)$.

The following diagram is commutative: 
$$
\xymatrix{0 \ar[r] & T_{Z_H} \ar[r]\ar[d] &T_{\mathbb P_{F_2}}|_{Z_H}\ar[r]\ar[d] & N_{Z_H/\mathbb P_{F_2}} \ar[r]\ar[d] &0\\
  0 \ar[r] & (e_{F_2}^*T_{F_1(Y)})|_{Z_H}\ar[r] & (e_{F_2}^*T_{F_1(X)})|_{Z_H}\ar[r] & (e_{F_2}^*N_{F_1(Y)/F_1(X)})|_{Z_H}\ar[r] &0\rlap{.}}
$$
As $Z_H\subset \mathbb P_{F_2}$ is the zero locus of a regular section of $e_{F_2}^*\mathcal E_{2}|_{F_1(X)}$, we have $N_{Z_H/\mathbb P_{F_2}}\simeq (e_{F_2}^*\mathcal E_{2}|_{F_1(X)})|_{Z_H}$, so that the last vertical arrow in the diagram is an isomorphism. As the second vertical arrow is injective by Proposition \ref{prop_immersion_plan_lines}, the first is injective as well. So the snake lemma gives $(e_{F_2}^*T_{F_1(Y)})|_{Z_H}/T_{Z_H}\simeq N_{\mathbb P_{F_2}/F_1(X)}|_{Z_H}$.

According to \cite[Proposition 4]{Iliev-Manivel_cub_hyp_int_syst}, $\Im(j_H)$ is a (non-normal) Lagrangian surface of the hyper-K\"ahler manifold $F_1(Y)$. In particular, outside a codimension $2$ subset of $F_2(X)$, we have
$$
\Omega_{Z_H}\simeq \left(e_{F_2}^*T_{F_1(Y)}\right)_{Z_H}/T_{Z_H}.
$$

As both sheaves are locally free, the isomorphism holds globally; \textit{i.e.}, 
\begin{equation}\label{isom_cotang_normal2}\Omega_{F_2(X)}\simeq N_{\mathbb P_{F_2}/F_1(X)}|_{Z_H}.
\end{equation}

We can now prove Theorem \ref{thm_1}

\begin{proof}[Proof of Theorem \ref{thm_1}] Looking at (\ref{isom_cotang_normal2}) and (\ref{ex_seq_tgt_bundle_part1}), we see that we only have to check that $\mathcal O_e(1)|_{Z_H}\simeq \mathcal O_{Z_H}$.

For a (general) hyperplane $H\subset \mathbb P(V^*)$, we have a rational map 
  $\varphi\colon\Gr(3,V)\dashrightarrow \Gr(2,\langle H\rangle)$, $P\mapsto P\cap H$ whose indeterminacy locus is $\Gr(3,\langle H\rangle)$. The morphism $j_H\colon F_2(X)\simeq Z_H\rightarrow F_1(Y)$ is the restriction of the map $\varphi$ to $F_2(X)$. To get the result, we will show more generally that $\varphi^*\mathcal O_{\Gr(2,\langle H\rangle)}(-1)\otimes \mathcal O_{\Gr(3,V)}(1)$ restricts to the trivial line bundle on the open set where $\varphi$ is defined, \textit{i.e.}, on $\Gr(3,V)\backslash \Gr(3,\langle H\rangle)$.
  
The subvariety $\Gr(3,\langle H\rangle)\subset \Gr(3,V)$ is the zero locus of a regular section of $\mathcal E_3$, so that $N_{\Gr(3,\langle H\rangle)/\Gr(3,V)}\simeq \mathcal E_{3}|_{\Gr(3,\langle H\rangle)}$. After blowing up this locus, we get
$$
  \xymatrix{E_\tau \ar@{^{(}->}[r]^j\ar[d] & \widetilde{\Gr(3,V)}\ar[d]^\tau \ar[rd]^{\widetilde{\varphi}} &\\
    \Gr(3,\langle H\rangle)\ar@{^{(}->}[r]^i & \Gr(3,V)\ar@{-->}[r]^\varphi &\Gr(2,\langle H\rangle)\rlap{,}}
$$
where the exceptional divisor $E_\tau$ is isomorphic to $\mathbb P(\mathcal E_3^*)\simeq \mathbb P(\wedge^2\mathcal E_3\otimes \det(\mathcal E_3)^{-1})$. So $E_\tau$ is isomorphic to the flag variety $\Fl(2,3,\langle H\rangle)$, and $\widetilde\varphi\circ j$ correspond to the projection on the Grassmannian of lines; hence
$$
\mathcal O_{E_\tau}(1)\simeq j^*\widetilde\varphi^*\mathcal O_{\Gr(2,\langle H\rangle)}(1)\otimes \tau_{E_\tau}^*i^*\mathcal O_{\Gr(3,V)}(-1)\quad \text{in}\ \Pic\left(E_\tau\right).
$$
As the restriction $\Pic(\Gr(3,V))\rightarrow \Pic(\Gr(3,\langle H\rangle))$ is an isomorphism, so is $\Pic(\widetilde{\Gr(3,V)})\rightarrow \Pic(E_\tau)$; thus $$
\mathcal O_{\widetilde{\Gr(3,V)}}(-E)\simeq \widetilde\varphi^*\mathcal O_{\Gr(2,\langle H\rangle)}(1)\otimes \tau^*\mathcal O_{\Gr(3,V)}(-1)\quad \text{ in}\ \Pic\left(\widetilde{\Gr(3,V)}\right).
$$
Now pushing forward by $\tau$ the short exact sequence defining $E$, we get
$$
\tau_*\widetilde\varphi^*\mathcal O_{\Gr(2,\langle H\rangle)}(1)\otimes \mathcal O_{\Gr(3,V)}(-1)\simeq \tau_*\mathcal O_{\widetilde{\Gr(3,V)}}(-E)\simeq \mathcal I_{\Gr(3,\langle H\rangle)/ \Gr(3,V)},
$$
which is indeed trivial on $\Gr(3,V)\backslash \Gr(3,\langle H\rangle)$.
\end{proof}

\section{Gauss map of \texorpdfstring{$\boldsymbol{F_2(X)}$}{F\textunderscore 2 (X)}}
Let $X\subset \mathbb P(V^*)\simeq \mathbb P^6$ be a smooth cubic hypersurface such that $F_2(X)$ is a smooth (irreducible) surface. We begin this section with the following.  

\begin{theoreme}\label{thm_descrip_h_1_and_wedge} The following sequence is exact:
\begin{equation}\label{ex_seq_descrip_H_1}
0\longrightarrow H^1\left(\mathcal O_{F_2(X)}\right)\longrightarrow \Sym^2V\otimes \det(V)\xrightarrow{\varphi_{\eq_X}\otimes \id_{\det(V)}} V^*\otimes \det(V)\longrightarrow 0,
\end{equation}
where $\varphi_{\eq_X}$ is defined to be $e_i+e_j\mapsto \eq_X(e_i,e_j,\cdot)$.

Moreover, we have an inclusion $\bigwedge^2H^1(\mathcal O_{F_2(X)})\subset H^2(\mathcal O_{F_2(X)})$, which by Hodge symmetry yields $\bigwedge^2H^0(\Omega_{F_2(X)})\subset H^0(K_{F_2(X)})$.
\end{theoreme}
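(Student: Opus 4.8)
The plan is to compute $H^1(\mathcal O_{F_2(X)})$ and $H^2(\mathcal O_{F_2(X)})$ by restricting to $G:=G(3,V)$ through the Koszul resolution \eqref{ex_seq_koszul_resol} of $\mathcal O_{F_2(X)}$, and to extract the cup product from the fact that \eqref{ex_seq_koszul_resol} is a resolution by a sheaf of commutative differential graded algebras (the Koszul algebra on $\Sym^3\mathcal E_3^*$), so that cup product on $H^\bullet(\mathcal O_{F_2(X)})$ is induced by the wedge maps $\wedge^a\Sym^3\mathcal E_3^*\otimes\wedge^b\Sym^3\mathcal E_3^*\to\wedge^{a+b}\Sym^3\mathcal E_3^*$. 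The only serious external inputs are Bott's theorem on $G$ and the decomposition of the bundles $\wedge^i\Sym^3\mathcal E_3^*$ (and their twists) into irreducible $\mathbb S_\lambda\mathcal E_3$, for which one uses the computer assistance acknowledged in the paper.

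\textbf{The exact sequence \eqref{ex_seq_descrip_H_1}.} Run the hypercohomology spectral sequence $E_1^{p,q}=H^q(G,\wedge^{-p}\Sym^3\mathcal E_3^*)\Rightarrow H^{p+q}(\mathcal O_{F_2(X)})$, computing the $E_1$-terms by Bott. In total degree $1$ the surviving contributions should assemble so that $H^1(\mathcal O_{F_2(X)})$ is the kernel of a single spectral-sequence differential, which — after untwisting by $\det(V)$ — one recognizes as $\varphi_{\eq_X}\otimes\id_{\det(V)}\colon\Sym^2V\otimes\det(V)\to V^*\otimes\det(V)$. Two independent cross-checks confirm this: first, taking global sections in the cotangent sequence \eqref{ex_seq_tgt_bundle_seq} of Theorem \ref{thm_1} gives $0\to V\to\Sym^2V^*\to H^0(\Omega_{F_2(X)})\to0$ — using $H^0(\mathcal Q_3^*|_{F_2(X)})=V$, $H^0(\Sym^2\mathcal E_3|_{F_2(X)})=\Sym^2V^*$, $H^1(\mathcal Q_3^*|_{F_2(X)})=0$, again from \eqref{ex_seq_koszul_resol} and Bott — the first map being $e_i\mapsto\eq_X(e_i,\cdot,\cdot)$, so that $H^0(\Omega_{F_2(X)})\cong\Sym^2V^*/\langle\eq_X(e_i,\cdot,\cdot)\rangle$, which is Serre/Poincaré dual to $\Ker\varphi_{\eq_X}$; second, Collino's Theorem \ref{thm_Collino_intro} together with Griffiths residues identifies $H^1(\mathcal O_{F_2(X)})\cong H^{2,1}(F_2(X))^\vee\cong H^{3,2}(X)^\vee\cong R(\eq_X)_2^\vee=\Ker\varphi_{\eq_X}$. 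In particular $\dim=\dim\Sym^2V-\dim V^*=28-7=21=h^{1,0}(F_2(X))$, as it must be.

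\textbf{The inclusion $\bigwedge^2H^1(\mathcal O_{F_2(X)})\subset H^2(\mathcal O_{F_2(X)})$.} Since $H^1(\mathcal O_{F_2(X)})=H^{0,1}(F_2(X))$ lies in odd total degree, cup product $H^1(\mathcal O_{F_2(X)})^{\otimes2}\to H^2(\mathcal O_{F_2(X)})$ is alternating and factors through $\bigwedge^2H^1(\mathcal O_{F_2(X)})$; the content is the injectivity of the resulting map. Using the DGA structure of \eqref{ex_seq_koszul_resol} and keeping track of the pages on which the $H^1$- and $H^2$-classes are represented, this pairing becomes an explicit $\mathrm{GL}(V)$-equivariant map out of $\bigwedge^2\Ker\varphi_{\eq_X}\subset\bigwedge^2(\Sym^2V)\otimes(\det V)^{\otimes2}$ into a concrete subquotient of $\Sym^4V\otimes(\det V)^{\otimes2}$ sitting inside $H^2(\mathcal O_{F_2(X)})$, and I would verify injectivity by a representation-theoretic (and, if needed, machine-assisted) analysis of the relevant Pieri/plethysm maps. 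Complex conjugation then yields the Hodge-symmetric form $\bigwedge^2H^0(\Omega_{F_2(X)})\subset H^0(K_{F_2(X)})$.

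\textbf{Main obstacle.} The second part is where the work is. Unlike the first, there is no a priori dimension count forcing the wedge map to be injective, and the classical tools fall short: a Castelnuovo--de Franchis argument only shows the kernel contains no \emph{decomposable} element — for $X$ general, $\Alb(F_2(X))\cong J^5(X)$ is simple and $\alb_{F_2}$ is an immersion by Theorem \ref{thm_Collino_intro}, so $F_2(X)$ carries no irrational pencil — and says nothing about indecomposable ones. So the crux is to transport the cup product faithfully through the Koszul DGA and prove, via the resulting explicit equivariant description, that $\bigwedge^2\Ker\varphi_{\eq_X}$ injects into $H^2(\mathcal O_{F_2(X)})$.
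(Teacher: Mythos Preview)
Your plan for \eqref{ex_seq_descrip_H_1} is exactly the paper's: run the Koszul spectral sequence, compute $E_1$ by Borel--Weil--Bott, and find that the only contribution to total degree~$1$ is $E_\infty^{-3,4}=\Ker(d_1^{-3,4}\colon \Sym^2V\otimes\det V\to V^*\otimes\det V)$, with surjectivity of $d_1^{-3,4}$ equivalent to $X$ not being a cone. (Your cross-checks via Theorem~\ref{thm_1} and Griffiths residues are fine but extraneous; note that the first one relies on the identifications $H^0(\mathcal Q_3^*|_{F_2(X)})\simeq V$ etc., which in the paper are Proposition~\ref{prop_descrip_h_0_omega}, proved \emph{after} this theorem.)

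For the inclusion $\bigwedge^2H^1\subset H^2$, however, you have turned an easy step into a hard one, and this is the ``main obstacle'' you flag. The paper does \emph{not} chase the cup product through the DGA and then attempt to verify injectivity. Instead it simply continues the same spectral-sequence bookkeeping in total degree~$2$: among the $E_1^{-i,i+2}$ only $i=2,6,10$ are nonzero, and Borel--Weil--Bott gives
\[
E_1^{-6,8}\;\simeq\;\bigl(\wedge^2\Sym^2V\bigr)\otimes\det(V)^{\otimes 2},
\qquad
E_1^{-5,8}\;\simeq\;\bigl(\Sym^2V\otimes V^*\bigr)\otimes\det(V)^{\otimes 2}.
\]
(So your target ``$\Sym^4V\otimes\det(V)^{\otimes 2}$'' is wrong; the relevant summand of $\wedge^6\Sym^3\mathcal E_3^*$ contributing to $H^8$ is $\Gamma^{(9,7,2)}\mathcal E_3^*$, yielding $\Gamma^{(5,3,2,\dots,2)}V\simeq\wedge^2\Sym^2V\otimes\det(V)^{\otimes 2}$.) The differential $d_1^{-6,8}$ is contraction by $\eq_X$, and the paper's one-line trick is to recognise the pair $(E_1^{-6,8}\to E_1^{-5,8})$ as the first two terms of the \emph{exterior square} of the short exact sequence \eqref{ex_seq_descrip_H_1}. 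That exterior square is the four-term exact sequence
\[
0\longrightarrow \wedge^2H^1(\mathcal O_{F_2(X)})\longrightarrow \wedge^2\Sym^2V\otimes\det(V)^{\otimes 2}\longrightarrow \Sym^2V\otimes V^*\otimes\det(V)^{\otimes 2}\longrightarrow \Sym^2V^*\otimes\det(V)^{\otimes 2}\longrightarrow 0,
\]
so $\Ker(d_1^{-6,8})=\wedge^2H^1(\mathcal O_{F_2(X)})$ on the nose. One then checks from the $E_1$ table that all incoming and outgoing higher differentials at $(-6,8)$ vanish, whence $E_\infty^{-6,8}=\wedge^2H^1(\mathcal O_{F_2(X)})$ is literally a piece of the filtration on $H^2(\mathcal O_{F_2(X)})$. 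No injectivity has to be ``verified'': it is the inclusion of $F^{-6}$ into $H^2$. Your DGA viewpoint is compatible with this (the multiplicative structure identifies the map $\wedge^2E_\infty^{-3,4}\to E_\infty^{-6,8}$ with the associated graded of cup product), but the paper's argument makes the obstacle you worry about disappear.
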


\begin{proof} As $\mathcal O_{F_2(X)}$ admits the Koszul resolution (\ref{ex_seq_koszul_resol}), to understand the cohomology groups $H^i(\mathcal O_{F_2(X)})$, we can use the spectral sequence
  $$
  E_1^{p,q}=H^q\left(G(3,V),\wedge^{-p}\Sym^3\mathcal E_3^*\right)\Longrightarrow H^{p+q}\left(\mathcal O_{F_2(X)}\right).
  $$

  As a reminder, we borrow from \cite{jiang_noether_lefschetz} (see also \cite{spandaw}) the following elementary presentation of the Borel--Weil--Bott theorem for a $G(3,W)$ with $\dim(W)=d$.
  
 For any vector space $L$ of dimension $f$ and any decreasing sequence of integers $a=(a_1,\dots,a_f)$, there is an irreducible $GL(L)$-representation (Weyl module) denoted by $\Gamma^{(a_1,\dots,a_f)}L$.

 To two decreasing sequences $a=(a_1,\dots,a_{d-e})$ and $b=(b_1,\dots,b_e)$, we can associate the sequence
 $$
 (\phi_1,\dots,\phi_d)=\phi(a,b):=(a_1-1,a_2-2,\dots,a_{d-e}-(d-e),b_1-(d-e+1),\dots,b_e-d).
 $$

 We measure how far $\phi(a,b)$ is from being decreasing by introducing $i(a,b):=\#\{\alpha<\beta,\ \phi_\alpha>\phi_\beta\}$.
 
 Finally, let us denote by $\phi(a,b)^+=(\phi_1^+,\dots,\phi_d^+)$ a re-ordering of $\phi(a,b)$ to make it non-increasing and set $\psi(a,b):=(\phi_1^++1,\dots,\phi_d^++d)$.
 
 The Borel--Weil--Bott theorem reads as follows. 

 \begin{theoreme}\label{thm_borel_weil_bott} We have
   \begin{enumerate}
\item  $H^q(G(3,W),\Gamma^a\mathcal Q_3^*\otimes\Gamma^b\mathcal E_3^*)=0$ for $q\neq i(a,b)$, 
\item $H^{i(a,b)}(G(3,W),\Gamma^a\mathcal Q_3^*\otimes\Gamma^b\mathcal E_3^*)=\Gamma^{\psi(a,b)}W$,
 \end{enumerate}
where $\mathcal Q_3$ and $\mathcal E_3$ are defined by \eqref{ex_seq_def_taut_3} and $\Gamma^{\psi(a,b)}W=0$ if $\psi(a,b)$ is not decreasing.
\end{theoreme} 

Now, we want to apply this theorem to compute the $E_1^{p,q}$ of the spectral sequence. Using Sage with the code
\begin{verbatim}
R=WeylCharacterRing("A2")
V=R(1,0,0)
for k in range(11): print k, V.symmetric_power(3).exterior_power(k)
\end{verbatim}
we get the decompositions into irreducible modules of $\wedge^k\Sym^3\mathcal E_3^*$. Then by the Borel--Weil--Bott theorem, we have

\allowdisplaybreaks
\begin{align*}
&(0) &\oplus_i^{12} H^i\left(\mathcal O_{G(3,V)}\right) &=\oplus_i H^i\left(\Gamma^{(0,\dots,0)}\mathcal Q_3^*\otimes\Gamma^{(0,0,0)}\mathcal E_3^*\right)\\
&  &  &=H^0\left(\mathcal O_{G(3,V)}\right)=\Gamma^{(0,\dots,0)}V\simeq \mathbb C,\\
&(1) &\oplus_i^{12} H^i\left(\Sym^3\mathcal E_3^*\right) &=\oplus_i^{12} H^i\left(\Gamma^{(3,0,0)}\mathcal E_3^*\right)=0,\\
&(2) &\oplus_iH^i\left(\wedge^2\Sym^3\mathcal E_3^*\right) &=\oplus_i H^i\left(\Gamma^{(3,3,0)}\mathcal E_3^*\oplus \Gamma^{(5,1,0)}\mathcal E_3^*\right)\\
&  &  &=H^4\left(\Gamma^{(5,1,0)}\mathcal E_3^*\right)=\Gamma^{(1,\dots,1,0)}V\simeq \wedge^6V,\\
&(3) &\oplus_i H^i\left(\wedge^3\Sym^3\mathcal E_3^*\right) &=\oplus_i H^i\left(\Gamma^{(3,3,3)}\mathcal E_3^*\oplus\Gamma^{(5,3,1)}\mathcal E_3^*\oplus \Gamma^{(6,3,0)}\mathcal E_3^*\oplus \Gamma^{(7,1,1)}\mathcal E_3^*\right)\\
&  &  &=H^4\left(\Gamma^{(7,1,1)}\mathcal E_3^*\right)=\Gamma^{(3,1,\dots,1)}V\simeq \Sym^2V\otimes \det(V),\\
&(4) &\oplus_i H^i\left(\wedge^4\Sym^3\mathcal E_3^*\right) &=\oplus_iH^i\left(\Gamma^{(6,3,3)}\mathcal E_3^*\oplus \Gamma^{(6,4,2)}\mathcal E_3^*\oplus\Gamma^{(6,6,0)}\mathcal E_3^*\oplus \Gamma^{(7,4,1)}\mathcal E_3^*\oplus\Gamma^{(8,3,1)}\mathcal E_3^*\right)\\
&  &  &=H^8\left(\Gamma^{(6,6,0)}\mathcal E_3^*\right)=\Gamma^{(2,\dots,2,0)}V\\
&  &  &\simeq \Sym^2V^*\otimes \det(V)^{\otimes 2},\\
&(5) &\oplus_iH^i\left(\wedge^5\Sym^3\mathcal E_3^*\right) &\simeq\oplus_iH^i\left(\Gamma^{(6,6,3)}\mathcal E_3^*\oplus\Gamma^{(7,4,4)}\mathcal E_3^*\oplus\Gamma^{(7,6,2)}\mathcal E_3^*\oplus\Gamma^{(8,4,3)}\mathcal E_3^*\oplus\Gamma^{(8,6,1)}\mathcal E_3^*\right.\\
&  &  &\ \ \ \ \ \ \ \left.\oplus\Gamma^{(9,4,2)}\mathcal E_3^*\right)\\
&  &  &=H^8\left(\Gamma^{(7,6,2)}\mathcal E_3^*\oplus\Gamma^{(8,6,1)}\mathcal E_3^*\right)\\
&  &  &=\Gamma^{(3,2,\dots,2)}V\oplus \Gamma^{(4,2\dots,2,1)}V\\
&  &  &\simeq \left(\Sym^2V\otimes V^*\right)\otimes \det(V)^{\otimes 2},\\
&(6) &\oplus_iH^i\left(\wedge^6\Sym^3\mathcal E_3^*\right) &\simeq \oplus_i H^i\left(\Gamma^{(7,7,4)}\mathcal E_3^*\oplus\Gamma^{(8,6,4)}\mathcal E_3^*\oplus\Gamma^{(9,6,3)}\mathcal E_3^*\oplus\Gamma^{(9,7,2)}\mathcal E_3^*\oplus\Gamma^{(10,4,4)}\mathcal E_3^*\right)\\
&  &  &=H^8\left(\Gamma^{(9,7,2)}\mathcal E_3^*\right)\\
&  &  &\simeq \Gamma^{(5,3,2\dots,2)}V\simeq \left(\wedge^2\Sym^2V\right)\otimes \det(V)^{\otimes 2},\\
&(7) &\oplus_iH^i\left(\wedge^7\Sym^3\mathcal E_3^*\right) &\simeq \oplus_iH^i\left(\Gamma^{(7,7,7)}\mathcal E_3^*\oplus\Gamma^{(9,7,5)}\mathcal E_3^*\oplus\Gamma^{(9,9,3)}\mathcal E_3^*\oplus\Gamma^{(10,7,4)}\mathcal E_3^*\right)\\
&  &  &=H^{12}\left(\Gamma^{(7,7,7)}\mathcal E_3^*\right)\simeq \Gamma^{(3,\dots,3)}V\simeq \det(V)^{\otimes 3},\\
&(8) &\oplus_iH^i\left(\wedge^8\Sym^3\mathcal E_3^*\right) &\simeq \oplus_iH^i\left(\Gamma^{(10,7,7)}\mathcal E_3^*\oplus \Gamma^{(10,9,5)}\mathcal E_3^*\right)\\
&  &  &=H^{12}\left(\Gamma^{(10,7,7)}\mathcal E_3^*\right)=\Gamma^{(6,3,\dots,3)}V\simeq \Sym^3V\otimes \det(V)^{\otimes 3},\\
&(9) &\oplus_iH^i\left(\wedge^9\Sym^3\mathcal E_3^*\right) &\simeq \oplus_iH^i\left(\Gamma^{(10,10,7)}\mathcal E_3^*\right)\\
&  &  &=H^{12}\left(\Gamma^{(10,10,7)}\mathcal E_3^*\right)\simeq \Gamma^{(6,6,3,\dots,3)}V,\\
&(10) &\oplus_iH^i\left(\wedge^{10}\Sym^3\mathcal E_3^*\right) &\simeq \oplus_iH^i\left(\Gamma^{(10,10,10)}\mathcal E_3^*\right)\\
&  &  &=H^{12}\left(\Gamma^{(10,10,10)}\mathcal E_3^*\right)\simeq \Gamma^{(6,6,6,3\dots,3)}V.
\end{align*}

To understand $H^1(\mathcal O_{F_2(X)})$, we have to examine the $E_\infty^{-i,i+1}$ for $i=0,\dots,10$. As $E_1^{-i,i+1}=0$ for any $i\neq 3$, we get $E_\infty^{-i,i+1}=0$ for $i \neq 3$.

On the other hand, for $r\geq 2$, $E_r^{-3,4}$ is defined as the (middle) cohomology of
$$
E_{r-1}^{-(2+r),2+r}\xrightarrow{d_{r-1}}E_{r-1}^{-3,4}\xrightarrow{d_{r-1}}E_{r-1}^{-4+r,6-r}.
$$ 
From the above computations, we see that $E_1^{-i,i}=0$ for $i\geq 3$, so that $E_r^{-i,i}=0$ for any $i\geq 3$ and $r\geq 1$.

So we get $E_2^{-3,4}=\Ker(d_1\colon E_1^{-3,4}\rightarrow E_1^{-2,4})$.

As $E_1^{-1,3}=0$, we have $E_2^{-1,3}=0$, so that $E_3^{-3,4}\simeq E_2^{-3,4}$.

As $E_1^{0,2}=0$, we have $E_3^{0,2}=0$, so that $E_4^{-3,4}\simeq E_2^{-3,4}$.

 As $E_1^{a,b}=0$ for any $a>0$, we get $E_\infty^{-3,4}\simeq E_2^{-3,4}$; \textit{i.e.}, the following sequence is exact:
 $$
 0\longrightarrow H^1\left(\mathcal O_{F_2(X)}\right)\longrightarrow E_1^{-3,4}\xrightarrow{d_1^{-3,4}}E_1^{-2,4}.
 $$

Now, $d_1^{-3,4}$ is given by contracting with the section defined by $\eq_X$, so that, choosing a basis $(e_0,\dots,e_6)$ of $V$, we have 
\begin{alignat*}{2}
d_1^{-3,4}\colon &\Sym^2V\otimes \det(V) &\ \longrightarrow \ &\wedge^6V\simeq V^*\otimes \det(V).\\
&(e_i+e_j)\otimes (e_0\wedge\cdots\wedge e_6) &\ \longmapsto \ &\sum_k\eq_X(e_i,e_j,e_k)\widehat{e_k}=\eq_X(e_i,e_j,\cdot)\otimes (e_0\wedge\cdots\wedge e_6)
\end{alignat*}

If this map is not surjective, we can choose the basis so that $e_0^*\otimes (e_0\wedge\cdots\wedge e_6)\notin \Im(d_1^{-3,4})$. Then we get $\eq_X(e_i,e_j,e_0)=0$ for any $i,j$, which means that the cubic hypersurface $X$ is a cone with vertex $[e_0]$.

So for a smooth cubic, $d_1^{-3,4}$ is surjective, so (\ref{ex_seq_descrip_H_1}) is exact.

Before tackling the case of $H^2(\mathcal O_{F_2(X)})$, we notice that the exterior square of (\ref{ex_seq_descrip_H_1}) gives the following exact sequence:
\begin{equation}\label{ex_seq_exterior_square_h_1}
    \begin{aligned}
0\longrightarrow \wedge^2H^1(\mathcal O_{F_2(X)})\longrightarrow (\wedge^2\Sym^2V)\otimes \det(V)^{\otimes 2}\xrightarrow{\varphi_{\eq_X}\otimes \id_{\Sym^2V\otimes \det(V)}}\Sym^2V\otimes V^*\otimes \det(V)^{\otimes 2}\\
\xrightarrow{\varphi_{\eq_X}\otimes \id_{V^*\otimes \det(V)}}\Sym^2V^*\otimes \det(V)^{\otimes 2}\longrightarrow 0.
    \end{aligned}
\end{equation}

To understand $H^2(\mathcal O_{F_2(X)})$, we have to examine the $E_\infty^{-i,i+2}$ for $i=0,\dots, 10$. As $E_1^{-i,i+2}=0$ for $i\neq 2,6,10$, we have $E_\infty^{-i,i+2}=0$ for $i\neq 2,6,10$.

\subsubsection*{Analysis of $\boldsymbol{E_\infty^{-2,4}}$} As $E_1^{-1,4}=0$, $E_2^{-2,4}$ is the cokernel of $d_1^{-3,4}$, which has just been proven to be surjective when $X$ is smooth. So $E_2^{-2,4}=0$, from which we get $E_\infty^{-2,4}=0$.

\subsubsection*{Analysis of $\boldsymbol{E_\infty^{-6,8}}$} Each $E_r^{-6,8}$ is the middle cohomology of
$$
E_{r-1}^{-(5+r),6+r}\xrightarrow{d_{r-1}}E_{r-1}^{-6,8}\xrightarrow{d_{r-1}}E_{r-1}^{-7+r,10-r}.
$$ 
From the above computations of the cohomology groups, we see that $E_1^{-(5+r),6+r}=0$ for any $r\geq 2$, so $E_{r-1}^{-(5+r),6+r}=0$ for any $r\geq 2$.

 So $E_2^{-6,8}=\Ker(d_1^{-6,8}\colon E_1^{-6,8}\rightarrow E_1^{-5,8})$.

  We see that $E_1^{-7+r,10-r}=0$ for any $r\geq 3$, so that $E_{r-1}^{-7+r,10-r}=0$ for any $r\geq 3$. As a result, we get $E_\infty^{-6,8}=E_2^{-6,8}$.

 From (\ref{ex_seq_exterior_square_h_1}), we get that $\Coker(d_1^{-6,8}\colon E_1^{-6,8}\rightarrow E_1^{-5,8})\simeq \Sym^2V^*\otimes \det(V)^{\otimes 2}$ and $E_\infty^{-6,8}=\Ker(d_1^{-6,8}\colon $ $ E_1^{-6,8}\rightarrow E_1^{-5,8})\simeq \wedge^2H^1(\mathcal O_{F_2(X)})$.

 Now, the spectral sequence computes the graded pieces of a filtration
 $$
 0=F^1\subset F^0\subset \cdots\subset F^{-10}\subset F^{-11}=H^2\left(\mathcal O_{F_2(X)}\right),
 $$
 and we have seen ($E_\infty^{-2,4}=0$) that all the graded pieces are trivial, but $\Gr_{-6}^F\simeq E_{\infty}^{-6,8}$ and (\textit{a priori}) $\Gr_{-10}^F\simeq E_\infty^{-10,12}$. As a result, we get $\wedge^2H^1(\mathcal O_{F_2(X)})\simeq E_\infty^{-6,8}=F^{-6}=\cdots=F^{-9}\subset F^{10}\subset H^2(\mathcal O_{F_2(X)})$, proving the inclusion.
\end{proof}

Moreover, we have the following proposition. 

\begin{proposition}\label{prop_descrip_h_0_omega} We have $H^0(\mathcal Q_{3}|_{F_2(X)}^*)\simeq H^0(\mathcal Q_3^*)\simeq V$ and $H^0(\Sym^2\mathcal E_{3}|_{F_2(X)})\simeq H^0(\Sym^2\mathcal E_3)\simeq \Sym^2V^*$, and the following sequence is exact: 
\begin{equation}\label{ex_seq_H_0_tgt_bundle_ex_seq}0\longrightarrow H^0\left(\mathcal Q_{3}^*|_{F_2(X)}\right)\longrightarrow H^0\left(\Sym^2\mathcal E_{3}|_{F_2(X)}\right)\longrightarrow H^0\left(\Omega_{F_2(X)}\right)\longrightarrow 0, 
\end{equation} where the first map is given by $v\mapsto \eq_X(v,\cdot,\cdot)$.
\end{proposition}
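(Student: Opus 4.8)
The plan is to pin down each of the three spaces in \eqref{ex_seq_H_0_tgt_bundle_ex_seq} and then read off the exactness from a dimension count. Left exactness is automatic from the short exact sequence \eqref{ex_seq_tgt_bundle_seq} of locally free sheaves, so the only new point besides the two identifications is the surjectivity of the last arrow: once we know $H^0(\mathcal Q_3^*|_{F_2(X)})\simeq V$ (of dimension $7$) and $H^0(\Sym^2\mathcal E_3|_{F_2(X)})\simeq\Sym^2 V^*$ (of dimension $28$), the image of $H^0(\Sym^2\mathcal E_3|_{F_2(X)})\to H^0(\Omega_{F_2(X)})$ has dimension $28-7=21=h^0(\Omega_{F_2(X)})$, the last value being the one recorded after Theorem \ref{thm_Collino_intro}; hence that arrow is onto. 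The description of the first map as $v\mapsto\eq_X(v,\cdot,\cdot)$ is then simply the reading of the contraction map already built into \eqref{ex_seq_tgt_bundle_seq} through the identification $H^0(\mathcal Q_3^*|_{F_2(X)})\simeq V$.

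To obtain the two identifications I would first compute cohomology on $G(3,V)$ itself by Borel--Weil--Bott (Theorem \ref{thm_borel_weil_bott}): with $a=(1,0,0,0)$, $b=(0,0,0)$ one gets $H^0(G(3,V),\mathcal Q_3^*)=\Gamma^{(1,0,\dots,0)}V\simeq V$ and $H^{>0}(G(3,V),\mathcal Q_3^*)=0$; and writing $\Sym^2\mathcal E_3=\Gamma^{(0,0,-2)}\mathcal E_3^*$ and taking $a=(0,0,0,0)$, $b=(0,0,-2)$, one gets $H^0(G(3,V),\Sym^2\mathcal E_3)=\Gamma^{(0,\dots,0,-2)}V\simeq\Sym^2 V^*$ and $H^{>0}(G(3,V),\Sym^2\mathcal E_3)=0$. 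It then suffices to show that for $\mathcal F=\mathcal Q_3^*$ and for $\mathcal F=\Sym^2\mathcal E_3$ the restriction map $H^0(G(3,V),\mathcal F)\to H^0(F_2(X),\mathcal F|_{F_2(X)})$ is an isomorphism.

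For this I would use the Koszul resolution \eqref{ex_seq_koszul_resol}: $\mathcal I_{F_2(X)}\otimes\mathcal F$ is quasi-isomorphic to the truncated complex $[\wedge^{10}\Sym^3\mathcal E_3^*\otimes\mathcal F\to\cdots\to\Sym^3\mathcal E_3^*\otimes\mathcal F]$ in which $\wedge^j\Sym^3\mathcal E_3^*\otimes\mathcal F$ sits in degree $1-j$, so that the associated hypercohomology spectral sequence has $E_1$-term $H^q(G(3,V),\wedge^j\Sym^3\mathcal E_3^*\otimes\mathcal F)$ abutting to $H^{1-j+q}(G(3,V),\mathcal I_{F_2(X)}\otimes\mathcal F)$. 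Hence it is enough to check that $H^{j-1}(G(3,V),\wedge^j\Sym^3\mathcal E_3^*\otimes\mathcal F)=0$ and $H^{j}(G(3,V),\wedge^j\Sym^3\mathcal E_3^*\otimes\mathcal F)=0$ for $1\le j\le 10$: these kill every $E_\infty$-contribution to $H^0$ and $H^1$ of $\mathcal I_{F_2(X)}\otimes\mathcal F$, so that $H^0(\mathcal I_{F_2(X)}\otimes\mathcal F)=H^1(\mathcal I_{F_2(X)}\otimes\mathcal F)=0$, and then the cohomology sequence of $0\to\mathcal I_{F_2(X)}\otimes\mathcal F\to\mathcal F\to\mathcal F|_{F_2(X)}\to0$ together with $H^1(G(3,V),\mathcal F)=0$ (established above) gives the desired isomorphism.

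The required vanishings are a Borel--Weil--Bott computation of exactly the kind carried out in the proof of Theorem \ref{thm_descrip_h_1_and_wedge}. For $\mathcal F=\mathcal Q_3^*=\Gamma^{(1,0,0,0)}\mathcal Q_3^*$ it is short: writing $\wedge^j\Sym^3\mathcal E_3^*=\bigoplus_b\Gamma^b\mathcal E_3^*$ with the $b$'s read off the table in that proof, each summand of $\wedge^j\Sym^3\mathcal E_3^*\otimes\mathcal Q_3^*=\bigoplus_b\Gamma^{(1,0,0,0)}\mathcal Q_3^*\otimes\Gamma^b\mathcal E_3^*$ is already in Borel--Weil--Bott form, and for each $b$ occurring one finds that either $\phi((1,0,0,0),b)$ has a repeated entry, so the summand is acyclic, or its single nonzero cohomology group lies in degree $4$, $8$ or $12$ --- in all cases away from the degrees $j-1$ and $j$ for $1\le j\le 10$. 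For $\mathcal F=\Sym^2\mathcal E_3$ one first decomposes each $\Gamma^b\mathcal E_3^*\otimes\Sym^2\mathcal E_3$ into Weyl modules by Pieri's rule for $GL_3$ and then applies Theorem \ref{thm_borel_weil_bott} summand by summand. This last bookkeeping is where I expect the real work to lie: the middle Koszul terms spawn a fair number of $GL_3$-modules after tensoring with $\Sym^2\mathcal E_3$, and one has to verify that none of them has cohomology in degree $j-1$ or $j$; I expect, as in Theorem \ref{thm_descrip_h_1_and_wedge}, that the great majority are acyclic (a repeated entry appearing in the relevant $\phi$) and that the few surviving cohomology groups again sit in degrees that are multiples of $4$, which keeps them away from $j-1$ and $j$ --- something quickly confirmed with the Sage and Macaulay2 code already used above.
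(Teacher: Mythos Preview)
Your overall architecture --- Koszul resolution, Borel--Weil--Bott, then dimension count using $h^0(\Omega_{F_2(X)})=21$ --- is exactly the paper's. For $\mathcal F=\mathcal Q_3^*$ your argument goes through as written.

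There is, however, a real gap in the $\mathcal F=\Sym^2\mathcal E_3$ case. Your expectation that the surviving cohomology of $\wedge^j\Sym^3\mathcal E_3^*\otimes\Sym^2\mathcal E_3$ lies in degrees that stay away from $j-1$ and $j$ is false at $j=4$: the decomposition of $\wedge^4\Sym^3\mathcal E_3^*\otimes\Sym^2\mathcal E_3$ contains the summand $\Gamma^{(8,1,1)}\mathcal E_3^*$, and Borel--Weil--Bott gives
\[
H^4\bigl(G(3,V),\Gamma^{(8,1,1)}\mathcal E_3^*\bigr)\simeq \Gamma^{(4,1,\dots,1)}V\simeq \Sym^3V\otimes\det(V)\neq 0,
\]
so $H^{j}(\wedge^{j}\Sym^3\mathcal E_3^*\otimes\Sym^2\mathcal E_3)\neq 0$ for $j=4$. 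Running the Sage code would have revealed this. Consequently you cannot conclude $H^1(\mathcal I_{F_2(X)}\otimes\Sym^2\mathcal E_3)=0$ from vanishing of $E_1$-terms alone, and hence cannot directly deduce that the restriction map is \emph{surjective}.

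The paper sidesteps this by reversing the logic. The vanishings $H^{j-1}(\wedge^j\Sym^3\mathcal E_3^*\otimes\Sym^2\mathcal E_3)=0$ for $1\le j\le 10$ \emph{do} all hold (check: the nonzero groups occur only for $(j,q)\in\{(2,4),(3,4),(4,4),(4,8),(5,8),(6,8),(7,8),(8,12),(9,12),(10,12)\}$, and none has $q=j-1$). That already gives $E_\infty^{0,0}=E_1^{0,0}=\Sym^2V^*$ in the spectral sequence for $\mathcal F|_{F_2(X)}$, hence $h^0(\Sym^2\mathcal E_{3}|_{F_2(X)})\ge 28$. Now use the left-exact sequence you already wrote down, together with $h^0(\mathcal Q_3^*|_{F_2(X)})=7$ and $h^0(\Omega_{F_2(X)})=21$, to get the reverse inequality $h^0(\Sym^2\mathcal E_{3}|_{F_2(X)})\le 7+21=28$. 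Equality follows, and with it both $H^0(\Sym^2\mathcal E_{3}|_{F_2(X)})\simeq\Sym^2V^*$ and the surjectivity onto $H^0(\Omega_{F_2(X)})$ at once. In short: your plan is right, but for $\Sym^2\mathcal E_3$ you only get the lower bound from Borel--Weil--Bott; the upper bound has to come from the cotangent sequence itself.
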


\begin{proof} To understand $H^0(\mathcal Q_{3}^*|_{F_2(X)})$, we use again the Koszul resolution (\ref{ex_seq_koszul_resol}) tensored by $\mathcal Q_3^*$. We have the spectral sequence
  $$
  E_1^{p,q}=H^q\left(G(3,V),\mathcal Q_3^*\otimes \wedge^{-p}\Sym^3\mathcal E_3^*\right)\Longrightarrow H^{p+q}\left(Q_{3}^*|_{F_2(X)}\right).
  $$
We  again use the Borel--Weil--Bott theorem \ref{thm_borel_weil_bott} to compute the cohomology groups on $G(3,V)$. The decompositions of the $\wedge^i\Sym\mathcal E_3^*$'s into irreducible modules have already been obtained in Theorem \ref{thm_descrip_h_1_and_wedge}. So we get
\allowdisplaybreaks
\begin{align*}
&  (0) &\oplus_iH^i\left(\mathcal Q_3^*\right) &\simeq \oplus_iH^i\left(\Gamma^{(1,0,0,0)}\mathcal Q_3^*\right)\\
&  &  &=H^0\left(\Gamma^{(1,0,0,0)}\mathcal Q_3^*\right)=V,\\
&(1) &\oplus_iH^i\left(\mathcal Q_3^*\otimes\Sym^3\mathcal E_3^*\right) &\simeq \oplus_iH^i\left(\Gamma^{(1,0,0,0)}\mathcal Q_3^*\otimes\Gamma^{(3,0,0)}\mathcal E_3^*\right)=0,\\
&(2) &\oplus_iH^i\left(\mathcal Q_3^*\otimes\wedge^2\Sym^3\mathcal E_3^*\right) &\simeq \oplus_iH^i\left(\Gamma^{(1,0,0,0)}\mathcal Q_3^*\otimes\left(\Gamma^{(3,3,0)}\mathcal E_3^*\oplus\Gamma^{(5,1,0)}\mathcal E_3^*\right)\right)=0,\\
&(3) &\oplus_iH^i\left(\mathcal Q_3^*\otimes\wedge^3\Sym^3\mathcal E_3^*\right) &\simeq \oplus_iH^i\left(\Gamma^{(1,0,0,0)}\mathcal Q_3^*\otimes\left(\Gamma^{(3,3,3)}\mathcal E_3^*\oplus\Gamma^{(5,3,1)}\mathcal E_3^*\oplus \Gamma^{(6,3,0)}\mathcal E_3^*\right.\right.\\
&  &  &\ \ \ \ \ \left.\left.\oplus\Gamma^{(7,1,1)}\mathcal E_3^*\right)\right)\\
&  &  &=H^4\left(\Gamma^{(1,0,0,0)}\mathcal Q_3^*\otimes\Gamma^{(7,1,1)}\mathcal E_3^*\right)\simeq \Gamma^{(3,2,1,\dots,1)}V,\\
&(4) &\oplus_iH^i\left(\mathcal Q_3^*\otimes\wedge^4\Sym^3\mathcal E_3^*\right) &\simeq \oplus_iH^i\left(\Gamma^{(1,0,0,0)}\mathcal Q_3^*\otimes\left(\Gamma^{(6,3,3)}\mathcal E_3^*\oplus\Gamma^{(6,4,2)}\mathcal E_3^*\oplus \Gamma^{(6,6,0)}\mathcal E_3^*\right.\right.\\
&  &  &\ \ \ \ \ \left.\left.\oplus\Gamma^{(7,4,1)}\mathcal E_3^*\oplus\Gamma^{(8,3,1)}\mathcal E_3^*\right)\right)\\
&  &  &=0, \\
&(5) &\oplus_iH^i\left(\mathcal Q_3^*\otimes\wedge^5\Sym^3\mathcal E_3^*\right) &\simeq \oplus_iH^i\left(\Gamma^{(1,0,0,0)}\mathcal Q_3^*\otimes\left(\Gamma^{(6,6,3)}\mathcal E_3^*\oplus\Gamma^{(7,4,4)}\mathcal E_3^*\oplus \Gamma^{(7,6,2)}\mathcal E_3^*\right.\right.\\
 & &  &\ \ \ \ \  \left.\left.\oplus\Gamma^{(8,4,3)}\mathcal E_3^*\oplus\Gamma^{(8,6,1)}\mathcal E_3^*\oplus \Gamma^{(9,4,2)}\mathcal E_3^*\right)\right)\\
&  &  &=0, \\
&(6) &\oplus_iH^i\left(\mathcal Q_3^*\otimes\wedge^6\Sym^3\mathcal E_3^*\right) &\simeq \oplus_iH^i\left(\Gamma^{(1,0,0,0)}\mathcal Q_3^*\otimes\left(\Gamma^{(7,7,4)}\mathcal E_3^*\oplus\Gamma^{(8,6,4)}\mathcal E_3^*\oplus \Gamma^{(9,6,3)}\mathcal E_3^*\right.\right.\\
 & &  &\ \ \ \ \ \left.\left.\oplus\Gamma^{(9,7,2)}\mathcal E_3^*\oplus\Gamma^{(10,4,4)}\mathcal E_3^*\right)\right)\\
 & &  &=H^8\left(\Gamma^{(1,0,0,0)}\mathcal Q_3^*\otimes \Gamma^{(9,7,2)}\mathcal E_3^*\right)\\
&  &  &\simeq\Gamma^{(5,3,3,2,\dots,2)}V,\\
&(7) &\oplus_iH^i\left(\mathcal Q_3^*\otimes\wedge^7\Sym^3\mathcal E_3^*\right) &\simeq \oplus_iH^i\left(\Gamma^{(1,0,0,0)}\mathcal Q_3^*\otimes\left(\Gamma^{(7,7,7)}\mathcal E_3^*\oplus\Gamma^{(9,7,5)}\mathcal E_3^*\oplus \Gamma^{(9,9,3)}\mathcal E_3^*\right.\right.\\
&  &  &\ \ \ \ \  \left.\left.\oplus\Gamma^{(10,7,4)}\mathcal E_3^*\right)\right)\\
&  &  &=0,\\
&(8) &\oplus_iH^i\left(\mathcal Q_3^*\otimes\wedge^8\Sym^3\mathcal E_3^*\right) &\simeq \oplus_iH^i\left(\Gamma^{(1,0,0,0)}\mathcal Q_3^*\otimes\left(\Gamma^{(10,7,7)}\mathcal E_3^*\oplus\Gamma^{(10,9,5)}\mathcal E_3^*\right)\right)\\
&  &  &=0,\\
&(9) &\oplus_iH^i\left(\mathcal Q_3^*\otimes\wedge^9\Sym^3\mathcal E_3^*\right) &\simeq \oplus_iH^i\left(\Gamma^{(1,0,0,0)}\mathcal Q_3^*\otimes \Gamma^{(10,10,7)}\mathcal E_3^*\right)=0,\\
&(10) &\oplus_i H^i\left(\mathcal Q_3^*\otimes\wedge^{10}\Sym^3\mathcal E_3^*\right) &\simeq \oplus_iH^i\left(\Gamma^{(1,0,0,0)}\mathcal Q_3^*\otimes \Gamma^{(10,10,10)}\mathcal E_3^*\right)\\
&  &  &=H^{12}\left(\Gamma^{(1,0,0,0)}\mathcal Q_3^*\otimes \Gamma^{(10,10,10)}\mathcal E_3^*\right)\simeq\Gamma^{(6,6,6,4,3,3,3)}V. 
\end{align*}

The graded pieces of the filtration on $H^0(\mathcal Q_{3}^*|_{F_2(X)})$ are given by $E_\infty^{-i,i}$, $i=0,\dots,10$. From the above calculations, we see that $E_1^{-i,i}=0$ for any $i\geq 1$; thus $E_\infty^{-i,i}=0$ for any $i\geq 1$.

On the other hand, $E_1^{0,0}=H^0(\mathcal Q_3^*)=V$, and as $E_r^{a,b}=0$ for any $a>0$, we have $E_r^{0,0}=\Coker(d_{r-1}\colon E_{r-1}^{-(r-1),r-2}  E_{r-1}^{0,0})$ for any $r\geq 2$. But the above calculations give $E_1^{-r,r-1}=0$ for $r\geq 0$, so that $E_r^{-r,r-1}=0$ for any $r\geq 1$. Thus $E_\infty^{0,0}=E_1^{0,0}$, proving that $H^0(\mathcal Q_{3}^*|_{F_2(X)})\simeq H^0(\mathcal Q_3^*)\simeq V$.

 Now, let us examine $H^0(\Sym^2\mathcal E_{3}|_{F_2(X)})$ using the spectral sequence 
 $$
 E_1^{p,q}=H^q\left(\Sym^2\mathcal E_3\otimes\wedge^{-p}\Sym^3\mathcal E_3^*\right)\Longrightarrow H^{p+q}\left(\Sym^2\mathcal E_{3}|_{F_2(X)}\right).
 $$
\indent Using Sage with the code 
\begin{Verbatim}[breaklines=true]
R=WeylCharacterRing("A2")
V=R(1,0,0)
W=R(0,0,-1)
for k in range(11): print k, W.symmetric_power(2)*V.symmetric_power(3).exterior_power(k)
\end{Verbatim} 
and the Borel--Weil--Bott theorem \ref{thm_borel_weil_bott}, we get 
\allowdisplaybreaks
\begin{align*}
&(0) &\oplus_i H^i\left(\Sym^2\mathcal E_3\right) &\simeq \oplus_iH^i\left(\Gamma^{(0,0,-2)}\mathcal E_3^*\right)\\
&  &  &=H^0\left(\Gamma^{(0,0,-2)}\mathcal E_3^*\right)\simeq \Gamma^{(0,\dots,0,-2)}V\simeq \Sym^2V^*,\\
&(1) &\oplus_iH^i\left(\Sym^2\mathcal E_3\otimes\Sym^3\mathcal E_3^*\right) &\simeq \oplus_iH^i\left(\Gamma^{(1,0,0)}\mathcal E_3^*\oplus \Gamma^{(2,0,-1)}\mathcal E_3^*\oplus \Gamma^{(3,0,-2)}\mathcal E_3^*\right)=0,\\
&(2) &\oplus_iH^i\left(\Sym^2\mathcal E_3\otimes\wedge^2\Sym^3\mathcal E_3^*\right) &\simeq \oplus_i H^i\left(\left(\Gamma^{(3,1,0)}\mathcal E_3^*\right)^{\oplus 2}\oplus \Gamma^{(3,2,-1)}\mathcal E_3^*\oplus \Gamma^{(3,3,-2)}\mathcal E_3^*\right.\\
&  &  &\ \ \ \ \ \left.\oplus\Gamma^{(4,0,0)}\mathcal E_3^*\oplus \Gamma^{(4,1,-1)}\mathcal E_3^*\oplus \Gamma^{(5,1,-2)}\mathcal E_3^*\oplus \Gamma^{(5,0,-1)}\mathcal E_3^*\right)\\
&  &  &=H^4\left(\Gamma^{(5,1,-2)}\mathcal E_3^*\oplus \Gamma^{(5,0,-1)}\mathcal E_3^*\right)\\
&  &  &\simeq \Gamma^{(1,\dots,1,-2)}V\oplus \Gamma^{(1,\dots,1,0,-1)}V,\\
&(3) &\oplus_i H^i\left(\Sym^2\mathcal E_3\otimes\wedge^3\Sym^3\mathcal E_3^*\right) &\simeq \oplus_iH^i\left(\left(\Gamma^{(3,3,1)}\mathcal E_3^*\right)^{\oplus 2}\oplus \Gamma^{(4,2,1)}\mathcal E_3^*\oplus \left(\Gamma^{(4,3,0)}\mathcal E_3^*\right)^{\oplus 2}\right.\\
&  &  &\ \ \ \ \ \oplus \left(\Gamma^{(5,1,1)}\mathcal E_3^*\right)^{\oplus 2}\oplus \left(\Gamma^{(5,2,0)}\mathcal E_3^*\right)^{\oplus 2}\oplus \left(\Gamma^{(5,3,-1)}\mathcal E_3^*\right)^{\oplus 2}\\
&  &  &\ \ \ \ \ \oplus \left(\Gamma^{(6,1,0)}\mathcal E_3^*\right)^{\oplus 2}\oplus \Gamma^{(6,2,-1)}\mathcal E_3^*\oplus \Gamma^{(6,3,-2)}\mathcal E_3^*\\
&  &  &\ \ \ \ \ \left.\oplus \Gamma^{(7,1,-1)}\mathcal E_3^*\right)\\
&  &  &=H^4\left(\left(\Gamma^{(5,1,1)}\mathcal E_3^*\right)^{\oplus 2}\oplus\left(\Gamma^{(6,1,0)}\mathcal E_3^*\right)^{\oplus 2}\oplus \Gamma^{(7,1,-1)}\mathcal E_3^*\right)\\
&  &  &\simeq \det(V)^{\oplus 2}\oplus \left(\Gamma^{(2,1,\dots,1,0)}V\right)^{\oplus 2}\oplus \Gamma^{(3,1,\dots,1,-1)}V,\\
&(4) &\oplus_i H^i\left(\Sym^2\mathcal E_3\otimes\wedge^4\Sym^3\mathcal E_3^*\right) &\simeq \oplus_iH^i\left(\Gamma^{(4,3,3)}\mathcal E_3^*\oplus \Gamma^{(4,4,2)}\mathcal E_3^*\oplus \left(\Gamma^{(5,3,2)}\mathcal E_3^*\right)^{\oplus 2}\right.\\
&  &  &\ \ \ \ \ \oplus \left(\Gamma^{(5,4,1)}\mathcal E_3^*\right)^{\oplus 2}\oplus \Gamma^{(6,2,2)}\mathcal E_3^*\oplus \left(\Gamma^{(6,3,1)}\mathcal E_3^*\right)^{\oplus 4}\\
&  &  &\ \ \ \ \ \oplus \left(\Gamma^{(6,4,0)}\mathcal E_3^*\right)^{\oplus 3}\oplus \Gamma^{(6,5,-1)}\mathcal E_3^*\oplus \Gamma^{(6,6,-2)}\mathcal E_3^*\\
&  &  &\ \ \ \ \ \oplus \left(\Gamma^{(7,2,1)}\mathcal E_3^*\right)^{\oplus 2}\oplus \left(\Gamma^{(7,3,0)}\mathcal E_3^*\right)^{\oplus 2}\oplus \Gamma^{(7,4,-1)}\mathcal E_3^*\\
&  &  &\ \ \ \ \ \left.\oplus \Gamma^{(8,1,1)}\mathcal E_3^*\oplus \Gamma^{(8,2,0)}\mathcal E_3^*\oplus \Gamma^{(8,3,-1)}\mathcal E_3^*\right)\\
&  &  &=\underbrace{H^4\left(\Gamma^{(8,1,1)}\mathcal E_3^*\right)}_{\simeq\, \Sym^3V\otimes \det(V)}\oplus \underbrace{H^8\left(\Gamma^{(6,6,-2)}\mathcal E_3^*\right)}_{\simeq\, \Gamma^{(2,\dots,2,-2)}V},\\
&(5) &\oplus_i H^i\left(\Sym^2\mathcal E_3\otimes\wedge^5\Sym^3\mathcal E_3^*\right) &\simeq\oplus_iH^i\left(\Gamma^{(5,4,4)}\mathcal E_3^*\oplus \left(\Gamma^{(6,4,3)}\mathcal E_3^*\right)^{\oplus 3}\oplus \left(\Gamma^{(6,5,2)}\mathcal E_3^*\right)^{\oplus 2}\right.\\
&  &  &\ \ \ \ \ \oplus \left(\Gamma^{(6,6,1)}\mathcal E_3^*\right)^{\oplus 3}\oplus \Gamma^{(7,3,3)}\mathcal E_3^*\oplus \left(\Gamma^{(7,4,2)}\mathcal E_3^*\right)^{\oplus 4}\\
&  &  &\ \ \ \ \ \oplus \left(\Gamma^{(7,5,1)}\mathcal E_3^*\right)^{\oplus 2}\oplus \left(\Gamma^{(7,6,0)}\mathcal E_3^*\right)^{\oplus 2}\oplus \left(\Gamma^{(8,3,2)}\mathcal E_3^*\right)^{\oplus 2}\\
&  &  &\ \ \ \ \ \oplus \left(\Gamma^{(8,4,1)}\mathcal E_3^*\right)^{\oplus 3}\oplus \Gamma^{(8,5,0)}\mathcal E_3^*\oplus \Gamma^{(8,6,-1)}\mathcal E_3^*\\
&  &  &\ \ \ \ \ \left. \oplus \Gamma^{(9,2,2)}\mathcal E_3^*\oplus \Gamma^{(9,3,1)}\mathcal E_3^*\oplus \Gamma^{(9,4,0)}\mathcal E_3^*\right)\\
&  &  &=H^8\left(\left(\Gamma^{(6,6,1)}\mathcal E_3^*\right)^{\oplus 3}\oplus \left(\Gamma^{(7,6,0)}\mathcal E_3^*\right)^{\oplus 2}\oplus \Gamma^{(8,6,-1)}\mathcal E_3^*\right)\\
&  &  &\simeq \left(\Gamma^{(2,\dots,2,1)}V\right)^{\oplus 3}\oplus \left(\Gamma^{(3,2,\dots,2,0)}V\right)^{\oplus 2}\oplus \Gamma^{(4,2,\dots,2,-1)}V,\\
&(6) &\oplus_i H^i\left(\Sym^2\mathcal E_3\otimes\wedge^6\Sym^3\mathcal E_3^*\right) &\simeq \oplus_i H^i\left(\Gamma^{(6,6,4)}\mathcal E_3^*\oplus \left(\Gamma^{(7,5,4)}\mathcal E_3^*\right)^{\oplus 2}\oplus \left(\Gamma^{(7,6,3)}\mathcal E_3^*\right)^{\oplus 3}\right. \\
&  &  &\ \ \ \ \ \oplus \left(\Gamma^{(7,7,2)}\mathcal E_3^*\right)^{\oplus 2}\oplus \left(\Gamma^{(8,4,4)}\mathcal E_3^*\right)^{\oplus 2}\oplus \left(\Gamma^{(8,5,3)}\mathcal E_3^*\right)^{\oplus 2}\\
&  &  &\ \ \ \ \ \oplus \left(\Gamma^{(8,6,2)}\mathcal E_3^*\right)^{\oplus 3}\oplus \Gamma^{(8,7,1)}\mathcal E_3^*\oplus \left(\Gamma^{(9,4,3)}\mathcal E_3^*\right)^{\oplus 2}\\
&  &  &\ \ \ \ \ \oplus \left(\Gamma^{(9,5,2)}\mathcal E_3^*\right)^{\oplus 2}\oplus \left(\Gamma^{(9,6,1)}\mathcal E_3^*\right)^{\oplus 2}\oplus \Gamma^{(9,7,0)}\mathcal E_3^*\\
  &  &  &\ \ \ \ \ \left. \oplus \Gamma^{(10,4,2)}\mathcal E_3^*\right)\\
&  &  &=H^8\left(\left(\Gamma^{(7,7,2)}\mathcal E_3^*\right)^{\oplus 2}\oplus \left(\Gamma^{(8,6,2)}\mathcal E_3^*\right)^{\oplus 3}\oplus \Gamma^{(8,7,1)}\mathcal E_3^*\right.\\
&  &  &\ \ \ \ \ \left. \oplus \left(\Gamma^{(9,6,1)}\mathcal E_3^*\right)^{\oplus 2}\oplus \Gamma^{(9,7,0)}\mathcal E_3^*\right)\\
&  &  &\simeq \left(\Gamma^{(3,3,2,\dots, 2)}V\right)^{\oplus 2}\oplus \left(\Gamma^{(4,2,\dots,2)}V\right)^{\oplus 3}\oplus \Gamma^{(4,3,2,\dots,2,1)}V\\*
&  &  &\ \ \ \ \ \oplus \left(\Gamma^{(5,2,\dots,2,1)}V\right)^{\oplus 2}\oplus \Gamma^{(5,3,2\dots,2,0)}V,\\
&(7) &\oplus_i H^i\left(\Sym^2\mathcal E_3\otimes\wedge^7\Sym^3\mathcal E_3^*\right) &\simeq \oplus_i H^i\left(\left(\Gamma^{(7,7,5)}\mathcal E_3^*\right)^{\oplus 2}\oplus \Gamma^{(8,6,5)}\mathcal E_3^*\oplus \left(\Gamma^{(8,7,4)}\mathcal E_3^*\right)^{\oplus 2}\right. \\
&  &  &\ \ \ \ \ \oplus \Gamma^{(9,5,5)}\mathcal E_3^*\oplus \left(\Gamma^{(9,6,4)}\mathcal E_3^*\right)^{\oplus 2}\oplus \left(\Gamma^{(9,7,3)}\mathcal E_3^*\right)^{\oplus 3}\\
&  &  &\ \ \ \ \ \oplus \Gamma^{(9,8,2)}\mathcal E_3^*\oplus \Gamma^{(9,9,1)}\mathcal E_3^*\oplus \Gamma^{(10,5,4)}\mathcal E_3^*\\
&  &  &\ \ \ \ \ \left. \oplus \Gamma^{(10,6,3)}\mathcal E_3^*\oplus \Gamma^{(10,7,2)}\mathcal E_3^*\right)\\
&  &  &=H^8\left(\Gamma^{(9,8,2)}\mathcal E_3^*\oplus \Gamma^{(9,9,1)}\mathcal E_3^*\oplus \Gamma^{(10,7,2)}\mathcal E_3^*\right)\\
&  &  &\simeq \Gamma^{(5,4,2\dots,2)}V\oplus \Gamma^{(5,5,2,\dots,2,1)}V\oplus \Gamma^{(6,3,2,\dots,2)}V,\\
&(8) &\oplus_i H^i\left(\Sym^2\mathcal E_3\otimes\wedge^8\Sym^3\mathcal E_3^*\right) &\simeq \oplus_i H^i\left(\Gamma^{(8,7,7)}\mathcal E_3^*\oplus \Gamma^{(9,7,6)}\mathcal E_3^*\oplus \Gamma^{(9,8,5)}\mathcal E_3^*\right. \\
&  &  &\ \ \ \ \ \oplus \Gamma^{(9,9,4)}\mathcal E_3^*\oplus \left(\Gamma^{(10,7,5)}\mathcal E_3^*\right)^{\oplus 2}\oplus \Gamma^{(10,8,4)}\mathcal E_3^*\\
&  &  &\ \ \ \ \ \left. \oplus \Gamma^{(10,9,3)}\mathcal E_3^*\right)\\
&  &  &=H^{12}\left(\Gamma^{(8,7,7)}\mathcal E_3^*\right)\simeq \Gamma^{(4,3,\dots,3)}V,\\
&(9) &\oplus_i H^i\left(\Sym^2\mathcal E_3\otimes\wedge^9\Sym^3\mathcal E_3^*\right) &\simeq \oplus_i H^i\left(\Gamma^{(10,8,7)}\mathcal E_3^*\oplus \Gamma^{(10,9,6)}\mathcal E_3^*\oplus \Gamma^{(10,10,5)}\mathcal E_3^*\right)\\
&  &  &=H^{12}\left(\Gamma^{(10,8,7)}\mathcal E_3^*\right)\simeq \Gamma^{(6,4,3,\dots,3)}V,\\
&(10) &\oplus_i H^i\left(\Sym^2\mathcal E_3\otimes\wedge^{10}\Sym^3\mathcal E_3^*\right) &\simeq \oplus_i H^i\left(\Gamma^{(10,10,8)}\mathcal E_3^*\right)\\
&  &  &=H^{12}\left(\Gamma^{(10,10,8)}\mathcal E_3^*\right)\simeq \Gamma^{(6,6,4,3,\dots,3)}V. 
\end{align*}

The graded pieces of the filtration on $H^0(\Sym^2\mathcal E_{3}|_{F_2(X)})$ are given by the $E_\infty^{-i,i}$. We have $E_\infty^{-i,i}=0$ for any $i\neq 0,4$ since $E_1^{-i,i}=0$ for $i\neq 0,4$.

As $E_r^{a,b}=0$ for any $a>0$ and $E_r^{-r,r-1}=0$ (because $E_1^{-r,r-1}=0$) for any $r\geq 1$, we have $E_\infty^{0,0}=E_1^{0,0}$.

In particular, $H^0(\Sym^2\mathcal E_3)\simeq E_\infty^{0,0}\subset H^0(\Sym^2\mathcal E_{3}|_{F_2(X)})$. As $h^0(\Sym^2\mathcal E_3)=\dim(\Sym^2V^*)=28$, we have $h^0(\Sym^2\mathcal E_{3}|_{F_2(X)})\geq 28$. By Hodge symmetry, $h^0(\Omega_{F_2(X)})=h^1(\mathcal O_{F_2(X)})=21$ (see Theorem \ref{thm_descrip_h_1_and_wedge}). So the exactness of the sequence
$$
0\longrightarrow H^0\left(\mathcal Q_{3}^* |_{F_2(X)}\right)\longrightarrow H^0\left(\Sym^2\mathcal E_{3}|_{F_2(X)}\right)\longrightarrow H^0\left(\Omega_{F_2(X)}\right)
$$
implies $H^0(\Sym^2\mathcal E_3)=H^0(\Sym^2\mathcal E_{3}|_{F_2(X)})$ and the surjectivity of the last map.
\end{proof}

According to Theorem \ref{thm_descrip_h_1_and_wedge}, $\bigwedge^2H^0(\Omega_{F_2(X)})\subset H^0(K_{F_2(X)})$. As $K_{F_2(X)}\simeq \mathcal O_{G(3,V)}(3)|_{F_2(X)}$, the map $\rho\colon F_2(X)\dashrightarrow |\bigwedge^2H^0(\Omega_{F_2(X)})|$ is the composition of the degree $3$ Veronese of the natural embedding $F_2(X)\subset G(3,V)$ followed by a linear projection. Moreover, we have the following. 

\begin{lemme}\label{lem_bpf_wedge_kernel_alb}\leavevmode
  \begin{enumerate}
    \item\label{lbwka-1} The canonical bundle $K_{F_2(X)}$ is generated by the sections in $\bigwedge^2H^0(\Omega_{F_2(X)})\subset H^0(K_{F_2(X)})$. In particular, $|\bigwedge^2H^0(\Omega_{F_2(X)})|$ is base-point-free.

\item\label{lbwka-2}  For any $[P]\in F_2(X)$, the following sequence is exact:
  $$
  0\longrightarrow \mathcal K_{[P]}\longrightarrow H^0\left(\Omega_{F_2(X)}\right)\xrightarrow{\ev([P])} \Omega_{F_2(X),[P]}\longrightarrow 0,
  $$
  where $\mathcal K_{[P]}=\{Q\in H^0(\mathcal O_{\mathbb P^6}(2)),\ P\subset \{Q=0\}\}/\Span( (\eq_X(x,\cdot,\cdot))_{x\in \langle P\rangle})$. 
\end{enumerate}
\end{lemme}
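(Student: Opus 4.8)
The plan is to derive part (2) from a commutative diagram of evaluation maps at a point, and then to obtain part (1) as a formal consequence; so I treat (2) first.

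\emph{Part (2).} Fix $[P]\in F_2(X)$ and consider the commutative diagram whose top row is the exact sequence \eqref{ex_seq_H_0_tgt_bundle_ex_seq} of Proposition \ref{prop_descrip_h_0_omega}, whose bottom row is the fibre at $[P]$ of the vector-bundle sequence \eqref{ex_seq_tgt_bundle_seq} of Theorem \ref{thm_1} (a short exact sequence of vector bundles, hence still exact on each fibre), and whose vertical arrows are the evaluation maps at $[P]$:
\[
\xymatrix{
0 \ar[r] & H^0(\mathcal Q_3^*|_{F_2(X)}) \ar[r]\ar[d] & H^0(\Sym^2\mathcal E_3|_{F_2(X)}) \ar[r]\ar[d] & H^0(\Omega_{F_2(X)}) \ar[r]\ar[d]^{\ev([P])} & 0\\
0 \ar[r] & \mathcal Q_3^*|_{[P]} \ar[r] & \Sym^2\mathcal E_3|_{[P]} \ar[r] & \Omega_{F_2(X),[P]} \ar[r] & 0
}
\]
By Proposition \ref{prop_descrip_h_0_omega} the left column is the natural quotient $V\twoheadrightarrow V/\langle P\rangle$ (since $\mathcal Q_3^*|_{[P]}\simeq V/\langle P\rangle$) and the middle column is the restriction of quadrics $\Sym^2V^*\twoheadrightarrow \Sym^2\langle P\rangle^*$; both are surjective, and the two squares commute because in each row the left-hand map is contraction with $\eq_X$, i.e.\ $v\mapsto\eq_X(v,\cdot,\cdot)$ (Theorem \ref{thm_1} and Proposition \ref{prop_descrip_h_0_omega}). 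The snake lemma then gives that $\ev([P])$ is surjective and that its kernel is the quotient of
\[
\ker\bigl(\Sym^2V^*\to\Sym^2\langle P\rangle^*\bigr)=\bigl\{Q\in H^0(\mathcal O_{\mathbb P^6}(2)):P\subset\{Q=0\}\bigr\}
\]
by the image of $\langle P\rangle=\ker(V\to V/\langle P\rangle)$ under $v\mapsto\eq_X(v,\cdot,\cdot)$. That image is exactly $\Span\bigl((\eq_X(x,\cdot,\cdot))_{x\in\langle P\rangle}\bigr)$: for $x\in\langle P\rangle$ the quadric $\eq_X(x,\cdot,\cdot)$ does vanish on $P$, since $\eq_X$ and hence all its polarisations vanish on the linear space $\langle P\rangle$; and these quadrics span a $3$-dimensional space because $H^0(\mathcal Q_3^*|_{F_2(X)})\hookrightarrow H^0(\Sym^2\mathcal E_3|_{F_2(X)})$ is injective (Proposition \ref{prop_descrip_h_0_omega})\,---\,equivalently, by Proposition \ref{prop_result_Collino_smoothness}, because $F_2(X)$ is smooth at $[P]$. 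This is the asserted exact sequence. (Alternatively, one may read off the bottom row and the identification $\Omega_{F_2(X),[P]}\simeq\Sym^2\langle P\rangle^*/\Span(Q_0,\dots,Q_3)$ directly from the normal form \eqref{normal_form_0}.)

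\emph{Part (1).} Since $\Omega_{F_2(X),[P]}$ is $2$-dimensional, the line $K_{F_2(X),[P]}=\bigwedge^2\Omega_{F_2(X),[P]}$ is spanned by $\ev([P])(\omega)\wedge\ev([P])(\omega')$ for any $\omega,\omega'\in H^0(\Omega_{F_2(X)})$ whose images form a basis of $\Omega_{F_2(X),[P]}$; such $\omega,\omega'$ exist by the surjectivity of $\ev([P])$ just proved. Under the inclusion $\bigwedge^2H^0(\Omega_{F_2(X)})\subset H^0(K_{F_2(X)})$ of Theorem \ref{thm_descrip_h_1_and_wedge}, the element $\omega\wedge\omega'$ is the section whose value at $[P]$ is $\ev([P])(\omega)\wedge\ev([P])(\omega')$, so it generates $K_{F_2(X)}$ at $[P]$. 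As $[P]$ was arbitrary, $K_{F_2(X)}$ is globally generated by $\bigwedge^2H^0(\Omega_{F_2(X)})$, and in particular $|\bigwedge^2H^0(\Omega_{F_2(X)})|$ is base-point-free.

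The only delicate point is the set-up of the diagram in (2): one must check that the evaluation maps on $H^0(\mathcal Q_3^*|_{F_2(X)})$ and $H^0(\Sym^2\mathcal E_3|_{F_2(X)})$ really are the claimed geometric restriction maps, and that the two ``$\eq_X$-contraction'' maps appearing as the left-hand arrows of the two rows are genuinely compatible. Once this is arranged, both statements follow formally from the snake lemma together with the smoothness of $F_2(X)$ (used only to ensure that the three quadrics $\eq_X(x,\cdot,\cdot)$, $x\in\langle P\rangle$, are linearly independent).
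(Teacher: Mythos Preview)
Your proof is correct and follows essentially the same approach as the paper: both set up the commutative diagram comparing the $H^0$-sequence \eqref{ex_seq_H_0_tgt_bundle_ex_seq} with the bundle sequence \eqref{ex_seq_tgt_bundle_seq} via evaluation maps and apply the snake lemma, the only cosmetic differences being that the paper writes the diagram at the sheaf level (identifying $\ker(\ev_1)\simeq\mathcal E_3^*|_{F_2(X)}$ from the tautological sequence) rather than at a single point, and treats part~(1) before part~(2).
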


\begin{proof}\eqref{lbwka-1}  As $\mathcal E_{3}|_{F_2(X)}$ is globally generated (as a restriction of $\mathcal E_3$, which is globally generated, by (\ref{ex_seq_def_taut_3})), $\Sym^2\mathcal E_{3}|_{F_2(X)}$ is also globally generated. The same holds for $\mathcal Q_{3}^*|_{F_2(X)}$ (by (\ref{ex_seq_def_taut_3})). So applying the evaluation to (\ref{ex_seq_H_0_tgt_bundle_ex_seq}), we get the commutative diagram
  $$
  \xymatrix@-1pc{0\ar[r] &H^0\left(\mathcal Q_{3}|_{F_2(X)}^*\right)\otimes \mathcal O_{F_2(X)}\ar[r]\ar[d]^{\ev_1} &H^0\left(\Sym^2\mathcal E_{3}|_{F_2(X)}\right)\otimes \mathcal O_{F_2(X)} \ar[r]\ar[d]^{\ev_2} &H^0\left(\Omega_{F_2(X)}\right)\otimes \mathcal O_{F_2(X)} \ar[r]\ar[d]^{\ev_3} &0\\
    0\ar[r] &\mathcal Q_{3}^*|_{F_2(X)}\ar[r] &\Sym^2\mathcal E_{3}|_{F_2(X)}\ar[r] &\Omega_{F_2(X)}\ar[r] &0,}
  $$
  where the bottom row is (\ref{ex_seq_tgt_bundle_seq}). As $\ev_2$ is surjective, we get that $\ev_3$ is also surjective; \textit{i.e.}, $\Omega_{F_2(X)}$ is globally generated. Then taking the exterior square of $\ev_3$, we get that $\wedge^2\ev_3$ is surjective:
  $$
  \bigwedge^2H^0\left(\Omega_{F_2(X)}\right)\otimes \mathcal O_{F_2(X)}\xrightarrowdbl{\wedge^2 \ev_3}\wedge^2\Omega_{F_2(X)}.
  $$
Now a base point of $|\bigwedge^2H^0(\Omega_{F_2(X)})|$ would be a point where $\wedge^2 \ev_3$ fails to be surjective. So $|\bigwedge^2H^0(\Omega_{F_2(X)})|$ is base-point-free.

\eqref{lbwka-2} As $H^0(\mathcal Q_{3}^*|_{F_2(X)})\simeq H^0(\mathcal Q_3^*)\simeq V$ by Proposition \ref{prop_descrip_h_0_omega}, (\ref{ex_seq_def_taut_3}) yields $\ker(\ev_1)\simeq \mathcal E_{3}^*|_{F_2(X)}$, so the snake lemma gives the exact sequence.
\end{proof}  

Now, let us come back to the Gauss map of $F_2(X)$, that we have defined to be 
\begin{alignat*}{2}
\mathcal G\colon & \alb_{F_2}(F_2(X)) &\ \longdashrightarrow \ & G\left(2, T_{\Alb(F_2(X)),0}\right),\\
& t  & \ \longmapsto \ & T_{\alb_{F_2}(F_2(X))-t,0}
\end{alignat*}
where $\alb_{F_2}(F_2(X))-t$ is the translation of $\alb_{F_2}(F_2(X))\subset \Alb(F_2(X))$ by $-t\in \Alb(F_2(X))$. It is defined on the smooth locus of $\alb_{F_2}(F_2(X))$.

According to \cite[Section (III)]{Coll_cub}, $T {\alb_{F_2}}$
is injective. So the indeterminacies of $\mathcal G$ are resolved by the pre-composition with $\alb_{F_2}$, \textit{i.e.},  
\begin{align*}
F_2(X) & \longrightarrow G\left(2,T_{\Alb(F_2(X)),0}\right)\\
t & \longmapsto T_{-\alb_{F_2}(t)}\text{Translate}(-\alb_{F_2}(t))\left(T_t \alb_{F_2}\left(T_{F_2(X),t}\right)\right).
\end{align*}

\indent We have the Pl\"ucker embedding 
$$
G\left(2,T_{\Alb(F_2(X)),0}\right)\simeq G\left(2,H^0\left(\Omega_{F_2(X)}\right)^*\right)\subset \mathbb P\left(\bigwedge^2H^0\left(\Omega_{F_2(X)}\right)^*\right)
$$
and the commutative diagram
$$
\xymatrix{F_2(X)\ar[r]^{\alb_{F_2}}\ar[dd]^{\rho} &\alb_{F_2}(F_2(X))\ar@{.>}[d]^{\mathcal G}\\
 &G\left(2,H^0(\Omega_{F_2(X)})^*\right)\ar@{^{(}->}[d]\\
  \left|\bigwedge^2H^0\left(\Omega_{F_2(X)}\right)\right|\ar[r]^*[@]{\cong\;\;} &\mathbb P\left(\bigwedge^2H^0\left(\Omega_{F_2(X)}\right)^*\right)\rlap{.}}
$$

The following proposition completes the proof of Theorem \ref{thm_gauss_map}. 

\begin{proposition}\label{prop_rho_embedding} The morphism $\rho$ is an embedding, which implies that $\alb_{F_2}$ is an isomorphism unto its image and $\mathcal G$ is an embedding.
\end{proposition}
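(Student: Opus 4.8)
The plan is to recognize $\rho$, up to the Plücker embedding $\iota_{\mathrm{Pl}}$, as the morphism
\[
g\colon F_2(X)\longrightarrow G\bigl(2,H^0(\Omega_{F_2(X)})^{*}\bigr),\qquad [P]\longmapsto \mathcal K_{[P]}^{\perp},
\]
where $\mathcal K_{[P]}=\ker(\ev([P]))\subset H^0(\Omega_{F_2(X)})$ is as in Lemma~\ref{lem_bpf_wedge_kernel_alb}\eqref{lbwka-2}: this $g$ is the resolved Gauss map of the diagram above, because $T\alb_{F_2}$ is injective (\cite{Coll_cub}), so that under $T_{\Alb(F_2(X)),0}\simeq H^0(\Omega_{F_2(X)})^{*}$ the image of $T_{[P]}\alb_{F_2}$, which is the transpose of $\ev([P])$, equals $(\ker\ev([P]))^{\perp}=\mathcal K_{[P]}^{\perp}$. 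Since $F_2(X)$ is projective and $\iota_{\mathrm{Pl}}$ is a closed immersion, it suffices to prove that $g$ is injective on points and an immersion; then $\rho=\iota_{\mathrm{Pl}}\circ g$ is a closed embedding, the first factor $\alb_{F_2}$ of $\rho=\iota_{\mathrm{Pl}}\circ\mathcal G\circ\alb_{F_2}$ is then a proper injective immersion, i.e.\ an isomorphism onto its image, and finally $\mathcal G$ (now defined everywhere on the smooth image $\alb_{F_2}(F_2(X))$, where it equals $\iota_{\mathrm{Pl}}^{-1}\circ\rho\circ\alb_{F_2}^{-1}$) is a closed embedding as well.

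I would work with the following dictionary. By Proposition~\ref{prop_descrip_h_0_omega}, $H^0(\Omega_{F_2(X)})=\Sym^2V^{*}/\Pi$ with $\Pi:=\{\eq_X(v,\cdot,\cdot):v\in V\}$ a \emph{fixed} $7$-dimensional subspace, and by Lemma~\ref{lem_bpf_wedge_kernel_alb}\eqref{lbwka-2} the subspace $\mathcal K_{[P]}$ is the image of $\mathcal K'_{[P]}:=\{Q\in\Sym^2V^{*}:Q|_{\langle P\rangle}=0\}$, so that its preimage in $\Sym^2V^{*}$ is the codimension-$2$ subspace $N_{[P]}:=\mathcal K'_{[P]}+\Pi$. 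Both injectivity checks then reduce to the following elementary fact $(\star)$: a base-point-free net of conics $\langle Q_0^{P},\dots,Q_3^{P}\rangle$ on $\langle P\rangle\simeq\mathbb P^2$ contains no plane $\lambda\cdot\langle P\rangle^{*}$ of conics through a fixed line $\{\lambda=0\}$, for such a sub-plane has $\{\lambda=0\}$ in its base locus, and adjoining a fourth conic $Q$ leaves the base point(s) $\{Q=0\}\cap\{\lambda=0\}\neq\varnothing$ (a conic meets a line); here $\langle Q_i^{P}\rangle$ is base-point-free and $4$-dimensional because $X$ is smooth along $P$ and $F_2(X)$ is smooth at $[P]$ (Proposition~\ref{prop_result_Collino_smoothness}).

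For injectivity I claim that $\langle P\rangle^{\perp}=\{\ell\in V^{*}:\ell\cdot V^{*}\subseteq N_{[P]}\}$, which recovers $\langle P\rangle$, hence $P$, from $N_{[P]}$ and therefore from $g([P])$. Here "$\subseteq$" is clear, and conversely, reducing modulo $\mathcal K'_{[P]}$, the images of $\ell\cdot V^{*}$ and of $\Pi$ in $\Sym^2V^{*}/\mathcal K'_{[P]}=\Sym^2\langle P\rangle^{*}$ are $(\ell|_{\langle P\rangle})\cdot\langle P\rangle^{*}$ and $\langle Q_0^{P},\dots,Q_3^{P}\rangle$, so $\ell\cdot V^{*}\subseteq N_{[P]}$ would force $(\ell|_{\langle P\rangle})\cdot\langle P\rangle^{*}\subseteq\langle Q_i^{P}\rangle$, which by $(\star)$ is possible only when $\ell|_{\langle P\rangle}=0$. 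For the immersion statement I would compute $dg([P])$ from the cotangent sequence \eqref{ex_seq_tgt_bundle_seq}, Lemma~\ref{lem_bpf_wedge_kernel_alb}\eqref{lbwka-2} and the normal form \eqref{normal_form_0}, as the second fundamental form of the subbundle $\mathcal K\subset H^0(\Omega_{F_2(X)})\otimes\mathcal O_{F_2(X)}$: for $\xi\in T_{F_2(X),[P]}\subset\Hom(\langle P\rangle,V/\langle P\rangle)$, differentiating $Q_t|_{\langle P_t\rangle}=0$ along a deformation $[P_t]$ of $[P]$ inside $X$ shows that the first-order term attached to $Q\in\mathcal K'_{[P]}$ is the conic $q(Q,\xi)\in\Sym^2\langle P\rangle^{*}$, $q(Q,\xi)(p)=Q(p,\xi(p))$, and that $\langle dg([P])(\xi),[Q]\rangle=[q(Q,\xi)]$ in $\Omega_{F_2(X),[P]}=\Sym^2\langle P\rangle^{*}/\langle Q_i^{P}\rangle$. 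Hence $dg([P])(\xi)=0$ exactly when $W_\xi:=\langle P\rangle^{*}\cdot(\ker\xi)^{\perp}=\{q(Q,\xi):Q\in\mathcal K'_{[P]}\}$ is contained in $\langle Q_i^{P}\rangle$; since $\dim W_\xi$ is $3$, $5$, $6$ according as $\rank\xi=1$, $2$, $3$, while $\dim\langle Q_i^{P}\rangle=4$, this forces $\rank\xi=1$, whence $W_\xi$ is again a plane of conics through a line inside the net, contradicting $(\star)$. So $dg([P])$ is injective at every $[P]$, and the proof concludes as in the first paragraph.

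The real work is in two places: justifying the formula $\langle dg([P])(\xi),[Q]\rangle=[q(Q,\xi)]$, a routine but slightly delicate unwinding of \eqref{ex_seq_tgt_bundle_seq} against the normal form \eqref{normal_form_0}; and the conceptual observation that point-separation and tangent-separation are both governed by base-point-freeness of the \emph{same} net $\langle Q_0^{P},\dots,Q_3^{P}\rangle$. Once those are settled the verifications are short linear algebra, so I do not expect a serious obstacle beyond bookkeeping.
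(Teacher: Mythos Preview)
Your proof is correct and follows a genuinely different route from the paper's. The paper works directly with the linear system $|\bigwedge^2 H^0(\Omega_{F_2(X)})|$ and checks separation of points and of tangent vectors by explicitly producing sections $Q\wedge Q'$; its main technical input is a lemma computing $\dim(J_{X,2}\cap L^2_{P,P'})$ (quadrics in the Jacobian ideal vanishing on both $P$ and $P'$) via a case analysis on whether $P\cap P'$ is empty, a point, or a line, the last case being the delicate one. You instead factor $\rho=\iota_{\mathrm{Pl}}\circ g$ through the Grassmannian and reduce both injectivity and immersion of $g$ to the single elementary observation $(\star)$ that a base-point-free $4$-dimensional system of plane conics cannot contain the $3$-plane $\lambda\cdot\langle P\rangle^{*}$ of conics through a fixed line. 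This is cleaner: it bypasses the case split on $P\cap P'$ entirely and makes transparent that the only geometric input beyond Theorem~\ref{thm_1} and Proposition~\ref{prop_descrip_h_0_omega} is the base-point-freeness of $\langle Q_0^P,\dots,Q_3^P\rangle$, i.e.\ smoothness of $X$ along $P$ (Proposition~\ref{prop_result_Collino_smoothness}). The paper's approach, in exchange, records the explicit dimension counts for $J_{X,2}\cap L^2_{P,P'}$ as a by-product. One terminological quibble: what you call a ``net'' is four-dimensional (a web); the argument for $(\star)$ is unaffected, since the point is that modulo the $3$-dimensional subspace $\lambda\cdot\langle P\rangle^{*}$ only one further conic remains, and a conic always meets a line in $\mathbb P^2$.
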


\begin{proof} Let us denote by $J_X$ the Jacobian ideal of $X$, \textit{i.e.}, the ideal of the polynomial ring generated by
  $\left(\frac{\partial \eq_X}{\partial X_i}\right)_{i=0,\dots,6}$ and by $J_{X,2}$ its homogeneous part of degree $2$. By Proposition \ref{prop_result_Collino_smoothness}, for any $[P]\in F_2(X)$, $\dim(J_{X,2}|_{P})=4$, so that $\dim(J_X\cap \{Q\in H^0(\mathcal O_{\mathbb P^6}(2)),\ P\subset \{Q=0\}\})=3$. We have the following. 

  \begin{lemme}\label{lem_jacobian_ideal}\leavevmode
    \begin{enumerate}
      \item\label{lji-1} For $[P]\in G(3,V)$, the codimension of\, $L_P^2:=\{Q\in H^0(\mathcal O_{\mathbb P^6}(2)),\ P\subset \{Q=0\}\}$ in $H^0(\mathcal O_{\mathbb P^6}(2))$ is $6$. For $[P]\neq [P']\in G(3,V)$, the codimension of\, $L_{P,P'}^2:=\{Q\in H^0(\mathcal O_{\mathbb P^6}(2)),\ P,P'\subset \{Q=0\}\}$ in $L_P^2$ is 
\begin{enumerate}
\item $6$ if $P\cap P'=\emptyset$,
\item  $5$ if $P\cap P'=\{\pt\}$,
\item $3$ if $P\cap P'=\{\Line\}$.
\end{enumerate}
\item\label{lji-2} For $[P]\neq [P']\in F_2(X)$ such that $P\cap P'=\{\Line\}$, we have $\dim(J_X\cap L_{P,P'}^2)\geq 1$, and if\, $X$ is general, we even have $\dim(J_X\cap L_{P,P'}^2)\geq 2$. So  $L_P^2/(J_X\cap L_P^2)+ L_{P,P'}^2\subsetneq L_P^2$, and for $X$ general, $\dim(L_P^2/(J_X\cap L_P^2)+ L_{P,P'}^2)\geq 2$.
\end{enumerate}
    \end{lemme}

  \begin{proof} \eqref{lji-1} This follows from a direct calculation.

\eqref{lji-2} Up to a projective transformation, we can assume $P=\{X_0=\cdots=X_3=0\}$ and $P'=\{X_0=X_1=X_2=X_4=0\}$. Then $\eq_X$ is of the form (\ref{normal_form_0}) with the additional conditions $Q_3(0,X_5,X_6)=0$, $D_5(0,0,0,X_3)=0$, $D_6(0,0,0,X_3)=0$, $R(0,0,0,X_3)=0$.

By definition, the quadrics of the Jacobian ideal are $\frac{\partial \eq_X}{\partial X_i}$, and according to Proposition \ref{prop_result_Collino_smoothness}, $\left(\frac{\partial \eq_X}{\partial X_i}|_{P}\right)_{i=0,\dots,3}$ are linearly independent, so that
$$J_X\cap L_P^2=\Span\left(\left(\frac{\partial \eq_X}{\partial X_i}|_{P}\right)_{i=4,5,6}\right).
$$
For $i\in\{4,5,6\}$,
$$
\frac{\partial \eq_X}{\partial X_i}=X_0\frac{\partial Q_0}{\partial X_i}+X_1\frac{\partial Q_1}{\partial X_i}+X_2\frac{\partial Q_2}{\partial X_i}+X_3\frac{\partial Q_3}{\partial X_i}+D_i
  $$
  which, when restricted to $P'$, gives $\frac{\partial \eq_X}{\partial X_i}|_{P'}=X_3\frac{\partial Q_3}{\partial X_i}(0,X_5,X_6)+D_i(0,0,0,X_3)$. But since $Q_3(0,X_5,X_6)=0$, we have $\frac{\partial Q_3}{\partial X_i}(0,X_5,X_6)=0$ for $i=5,6$, so that $\frac{\partial \eq_X}{\partial X_5}|_{P'}=0=\frac{\partial \eq_X}{\partial X_6}|_{P'}$, \textit{i.e.}, $\frac{\partial \eq_X}{\partial X_5}$, $\frac{\partial \eq_X}{\partial X_6}\in L_{P,P'}^2\cap J_X$. For $X$ general, those two quadric polynomials are independent.
  
 We have $\dim(J_X\cap L^2_P + L^2_{P,P'})=\dim(J_X\cap L_P^2)+\dim(L^2_{P,P'})-\dim(J_X\cap L_{P,P'}^2)$, which, by the first item of the lemma, yields the result.
\end{proof}

  According to Lemma~\ref{lem_jacobian_ideal}, for $[P]\neq [P']\in F_2(X)$, we can always find a quadric $Q\in H^0(\mathcal O_{\mathbb P^6}(2))$ such that $0\neq \overline Q\in L_P^2/(J_X\cap L_P^2+L_{P,P'}^2)$; in particular, $Q|_{P}=0$
  but $Q|_{P'}\neq 0$. Pick another $Q'\in H^0(\mathcal O_{\mathbb  P^6}(2))\backslash (L_P^2\cup L_{P'}^2)$ (\textit{i.e.}, $Q'|_{P}\neq 0$, $Q'|_{P'}\neq 0$) such that $Q'|_{P'}$ is independent of $Q|_{P'}$ and $Q$ and $Q'$ are independent modulo $J_{X,2}$ ($\dim(H^0(\mathcal O_{\mathbb P^6}(2))/(J_{X,2}\oplus\mathbb C[Q]))=5$).

 By Proposition \ref{prop_descrip_h_0_omega}, such quadrics give rise to $1$-forms on $F_2(X)$. Then $Q\wedge Q'\in \bigwedge^2H^0(\Omega_{F_2(X)})$ vanishes at $[P]$ but not at $[P']$; \textit{i.e.}, $|\bigwedge^2H^0(\Omega_{F_2(X)})|$ separates points.

 Now, given a $[P]\in F_2(X)$, we recall that $$T_{[P]}F_2(X)=\{u\in \Hom(\langle P\rangle,V/\langle P\rangle),\ \eq_X(x,x,u(x))=0\ \forall x\in \langle P\rangle\}$$ (the first order of $\eq_X(x+u(x),x+u(x),x+u(x))$ is $0$ for all $x\in \langle P\rangle$).

 Let $Q\in L_P^2$ be such that $0\neq \overline Q\in H^0(\mathcal O_{\mathbb P^6}(2))/J_{X,2}$ and $T_{[P]}F_2(Q)\cap T_{[P]}F_2(X)=\{0\}$. Pick a non-zero $\overline Q'\in H^0(\mathcal O_{\mathbb P^6}(2))/J_{X,2}$ such that $Q'|_{P}\neq 0$; then $Q\wedge Q'\in \bigwedge^2H^0(\Omega_{F_2(X)})$ and $(Q\wedge Q')|_{P}=0$.
 
 Moreover, given a $u\in T_{[P]}F_2(X)$, we have $d_{[P]}Q(u)\wedge Q'|_{P}+Q|_{P}\wedge d_{[P]}Q'(u)=d_{[P]}Q(u)\wedge Q'|_{P}$, where $d_{[P]}Q(u)$ is the quadratic form $x\mapsto \eq_Q(x,u(x))$ and is non-trivial since $T_{[P]}F_2(Q)\cap T_{[P]}F_2(X)=\{0\}$. Then for $Q$ generic (containing $P$ and such that $T_{[P]}F_2(Q)\cap T_{[P]}F_2(X)=\{0\}$), $d_{[P]}Q(u)$ is linearly independent of $Q'|_{P}$, so that $Q\wedge Q'$ does vanish along the tangent vector $u$. So $|\bigwedge^2H^0(\Omega_{F_2(X)})|$ separates tangent directions.
\end{proof}

\section{Variety of osculating planes of a cubic 4-fold}
In (\ref{def_var_of_oscul_planes}), we have previously introduced, for a smooth cubic $4$-fold containing no plane $Z\subset \mathbb P(H^*)\simeq \mathbb P^5$, the variety of osculating planes 
$F_0(Z):=\{[P]\in G(3,H),\ \exists \ell\subset P\ \text{line s.t.}\ P\cap Z=\ell\ \text{(set-theoretically)}\}$.

The variety $F_0(Z)$ lives naturally in $\Fl(2,3,H)$, \textit{i.e.},
$$
F_0(Z)=\{([\ell],[P])\in \Fl(2,3,H),\  P\cap Z=\ell\ \text{(set-theoretically)}\}, 
$$
and from the exact sequence (\ref{ex_seq_taut_bundles_2_3}):
$$
  0\longrightarrow e^*\mathcal O_{G(2,H)}(-1)\otimes t^*\mathcal O_{G(3,H)}(1)\longrightarrow t^*\mathcal E_3\longrightarrow e^*\mathcal E_2\longrightarrow 0,
$$
we see that $e^*\mathcal O_{G(2,H)}(-1)\otimes t^*\mathcal O_{G(3,H)}(1)$ is, for $([\ell],[P])\in \Fl(2,3,H)$, the bundle of equations of $\ell\subset P$. As a result, $F_0(Z)$ is the zero locus on $\Fl(2,3,H)$ of a section of the rank $9$ vector bundle $\mathcal F$ defined by the exact sequence 
\begin{equation}\label{ex_seq_def_F}
    0\longrightarrow e^*\mathcal O_{G(2,H)}(-3)\otimes t^*\mathcal O_{G(3,H)}(3)\longrightarrow t^*\Sym^3\mathcal E_3\longrightarrow \mathcal F\longrightarrow 0.\end{equation}
In particular (since $\mathcal F$ is globally generated by the sections induced by $H^0(t^*\Sym^3\mathcal E_3)$), by Bertini-type theorems, for $Z$ general, $F_0(Z)$ is a smooth surface with $K_{F_0(Z)}\simeq (t^*\mathcal O_{G(3,H)}(3))|_{F_0(Z)}$.
Its link to the surface of planes of a cubic $5$-fold is the following. 

 \begin{proposition}\label{prop_etale_cover_F_2_F_0} Denoting by $X_Z=\{X_6^3-\eq_Z(X_0,\dots,X_5)=0\}$ the cyclic cubic $5$-fold associated to $Z$, the linear projection with center $p_0:=[0:\cdots:0:1]$ induces a degree $3$ \'etale cover $\pi\colon F_2(X_Z)\rightarrow F_0(Z)$ given by the torsion line bundle $(e^*\mathcal O_{G(2,H)}(-1)\otimes t^*\mathcal O_{G(3,H)}(1))|_{F_0(Z)}$.

   In particular, when $F_0(Z)$ is smooth, $F_2(X_Z)$ and $F_0(Z)$ are smooth and irreducible.
\end{proposition}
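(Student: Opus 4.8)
The plan is to realise $F_2(X_Z)$ as a cyclic $\mu_3$-cover of $F_0(Z)$ coming from the linear projection from $p_0$, and then to identify the associated torsion line bundle. First I would note that $\eq_{X_Z}(p_0) = 1 \neq 0$, so $p_0 \notin X_Z$; hence no plane contained in $X_Z$ passes through $p_0$, the linear projection $\pi_0$ from $p_0$ restricts to a morphism on $X_Z$ (which is precisely the cyclic triple cover branched along $Z$), and $\pi_0$ induces a morphism on $F_2(X_Z)$ sending $[P]$ to the plane $\bar P := \pi_0(P) \subset \mathbb P(H^*)$ (its indeterminacy being exactly the planes through $p_0$). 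Parametrising a plane $P \subset X_Z$ as the image of $s \in \mathbb P^2 \mapsto [a_0(s):\cdots:a_5(s):c(s)]$ with the $a_i$ and $c$ linear, membership $P \subset X_Z$ reads $c(s)^3 = \eq_Z(a_0(s),\dots,a_5(s))$; as $Z$ contains no plane we have $\eq_Z|_{\bar P} \not\equiv 0$, so $c \not\equiv 0$ and $\bar P \cap Z$ is set-theoretically the line $\ell := \{c = 0\} \subset \bar P$, and this $\ell$ is the unique such line since $\bar P \not\subset Z$. Thus $[P] \mapsto ([\ell],[\bar P])$ defines a morphism $\pi \colon F_2(X_Z) \to F_0(Z) \subset \Fl(2,3,H)$.

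Next I would bring in the line bundle $\mathcal L := e^*\mathcal O_{G(2,H)}(-1)\otimes t^*\mathcal O_{G(3,H)}(1)$, which by \eqref{ex_seq_taut_bundles_2_3} is the line subbundle of $t^*\mathcal E_3$ with fibre at $([\ell],[P])$ the space of linear forms on $P$ vanishing on $\ell$. By \eqref{ex_seq_def_F}, on $F_0(Z)$ the section $\tilde s$ of $t^*\Sym^3\mathcal E_3$ induced by $\eq_Z$ takes values in the subbundle $\mathcal L^{\otimes 3} = e^*\mathcal O_{G(2,H)}(-3)\otimes t^*\mathcal O_{G(3,H)}(3)$, and by the previous step it is there the section $([\ell],[\bar P]) \mapsto \eq_Z|_{\bar P}$, a nonzero cube at every point; so $\tilde s$ trivialises $\mathcal L^{\otimes 3}|_{F_0(Z)}$ and $\mathcal L|_{F_0(Z)}$ is $3$-torsion. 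On the other hand, a plane $P \subset X_Z$ with $\pi([P]) = ([\ell],[\bar P])$ is determined, via the isomorphism $\pi_0|_P \colon P \to \bar P$, by its ``last coordinate'' $c$, which is a linear form on $\bar P$ vanishing on $\ell$ with $c^{\otimes 3} = \tilde s$, and conversely any such $c$ yields a plane of $X_Z$. Hence, over $F_0(Z)$,
$$
F_2(X_Z) \;=\; \bigl\{\, c \in \mathrm{Tot}\bigl(\mathcal L|_{F_0(Z)}\bigr)\ :\ c^{\otimes 3} = \tilde s \,\bigr\},
$$
which is exactly the cyclic $\mu_3$-cover given by the $3$-torsion bundle $\mathcal L|_{F_0(Z)}$ together with the trivialisation $\tilde s$ of its cube. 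This cover is finite of degree $3$, and it is étale because $\tilde s$ vanishes nowhere on $F_0(Z)$ (where $\tilde s(x) \neq 0$, its three cube roots in the line $\mathcal L_x$ are pairwise distinct).

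Finally, when $F_0(Z)$ is smooth, $F_2(X_Z)$ is smooth, being étale over it; and since $X_Z = \{X_6^3 - \eq_Z = 0\}$ is a smooth cubic $5$-fold (a singular point would lie over $\mathrm{Sing}(Z) = \emptyset$), $F_2(X_Z)$ is connected by \cite[Proposition 1.8]{Coll_cub}, hence irreducible, and then $F_0(Z) = \pi(F_2(X_Z))$ is irreducible too. The main point of the argument, and the place where the hypothesis that $Z$ contains no plane is really used, is the middle step: one has to check that over $F_0(Z)$ the defining section of $F_0(Z)$ automatically factors through the cube $\mathcal L^{\otimes 3}$ of the ``equation of $\ell$ in $P$'' bundle and is nowhere vanishing there, so that the fibrewise bijection ``plane of $X_Z$ over $\bar P$ $\leftrightarrow$ cube root of $\eq_Z|_{\bar P}$'' upgrades to an étale cyclic cover with exactly the claimed Galois line bundle.
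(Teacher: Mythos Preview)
Your proof is correct and follows essentially the same route as the paper: both show $p_0\notin X_Z$ so that projection from $p_0$ sends planes of $X_Z$ to osculating planes of $Z$, identify the fibre of $\pi$ over $([\ell],[\bar P])$ with the cube roots of $\eq_Z|_{\bar P}$ inside the line $\mathcal L_{([\ell],[\bar P])}$ (the paper does this in coordinates, you phrase it as the $\mu_3$-cover inside $\mathrm{Tot}(\mathcal L)$), and deduce smoothness and irreducibility from Collino's connectedness result. The one point you leave implicit is that $\mathcal L|_{F_0(Z)}$ is \emph{non-trivial} $3$-torsion: the paper argues this directly (triviality would give three disjoint sections of $\pi$, contradicting connectedness of $F_2(X_Z)$), and the same conclusion is immediate from your description once you note that a trivial $\mathcal L$ would make your cube-root cover disconnected.
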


 \begin{proof} (1) The point $p_0$ does not belong to $X_Z$. In particular, any $[P]\in F_2(X_Z)$ is sent by $\pi_{p_0}\colon \mathbb P(V^*)\dashrightarrow \mathbb P(H^*)$ to a plane in $\mathbb P(H^*)$, where $V=H\oplus \mathbb C\cdot p_0$. The restriction of $\pi_{p_0}$ (also denoted by $\pi_{p_0}$) to $X$ is a degree $3$ cyclic cover of $\mathbb P^5$ ramified over $Z$. Let us denote by $\tau\colon [a_0:\cdots:a_6]\mapsto[a_0:\cdots:a_5:\xi a_6]$, with $\xi$ a primitive third root of $1$, the cover automorphism.

    For any $[P]\in F_2(X_Z)$, $\pi_{p_0}\colon \pi_{p_0}^{-1}(\pi_{p_0}(P))\rightarrow \pi_{p_0}(P)$ is a degree $3$ cyclic cover ramified over the cubic curve $\pi_{p_0}(P)\cap Z$. It contains the three sections $P, \tau(P),\tau^2(P)$, which in turn all contain (set-theoretically) the ramification curve $\pi_{p_0}(P)\cap Z$, so it is a line; \textit{i.e.}, $([\{\pi_{p_0}(P)\cap Z\}_{\red}],[\pi_{p_0}(P)])\in F_0(Z)$.

     Conversely, for any $([\ell],[P])\in F_0(Z)$, $\pi_{p_0}^{-1}|_{X_Z}(P)\rightarrow P$ is a degree $3$ cyclic cover ramified over $\{\ell\}^3$, so it consists of three surfaces isomorphic each to $P$, \textit{i.e.}, three planes. To make it even more explicit, if $P=\{X_0=X_1=X_2=0\}$ and $\ell=\{X_0=X_1=X_2=X_3=0\}$, then $\pi_{p_0}^{-1}|_{X_Z}(P)$ is defined in $\pi_{p_0}^{-1}(P)\simeq\Span(P,p_0)\simeq \mathbb P^3$ by $X_6^3-aX_3^3$ for some $a\neq 0$ (since $Z$ contains no plane), and we have $X_6^3-aX_3^3=(X_6-bX_3)(X_6-b'X_3)(X_6-b''X_3)$, where $b,b',b''$ are the distinct roots of $y^3=a$. So $\pi\colon F_2(X_Z)\rightarrow F_0(Z)$ is \'etale of degree $3$.

     (2) The equation $\eq_Z$ defines a section $\sigma_{\eq_Z}\in H^0(t^*\Sym^3\mathcal E_3)\simeq H^0(\Sym^3\mathcal E_3)$ and by projection in (\ref{ex_seq_def_F}) a section $\overline{\sigma_{\eq_Z}}$ of $\mathcal F$ whose zero locus is $F_0(Z)$. Restricting (\ref{ex_seq_def_F}) to $F_0(Z)$, we see that $\sigma_{\eq_Z}$ induces a section of $(e^*\mathcal O_{G(2,H)}(-3)\otimes t^*\mathcal O_{G(3,H)}(3))|_{F_0(Z)}$ which vanishes nowhere since $Z$ contains no plane. Thus
     $$
     \left(e^*\mathcal O_{G(2,H)}(-3)\otimes t^*\mathcal O_{G(3,H)}(3)\right)|_{F_0(Z)}\simeq \mathcal O_{F_0(Z)}.
     $$

     Now if $(e^*\mathcal O_{G(2,H)}(-1)\otimes t^*\mathcal O_{G(3,H)}(1))|_{F_0(Z)}\simeq \mathcal O_{F_0(Z)}$, since $(e^*\mathcal O_{G(2,H)}(-1)\otimes t^*\mathcal O_{G(3,H)}(1))|_{F_0(Z)}$ is the bundle of equation of $\ell_x\subset P_x$ for any $x=([\ell_x],[P_x])\in F_0(Z)$, for any nowhere-vanishing section $s$ of $(e^*\mathcal O_{G(2,H)}(-1)\otimes t^*\mathcal O_{G(3,H)}(1))|_{F_0(Z)}$, we would be able to define three distinct sections of $\pi\colon F_2(X_Z)\rightarrow F_0(Z)$, namely (symbolically) $[x\mapsto \{X_6-\xi^k s(x)\}_{\Span(P_x,p_0)}]$, $k=0,1,2$. But according to \cite[Proposition 1.8]{Coll_cub}, $F_2(X)$ is connected for any $X$. Hence we have a contradiction. So $(e^*\mathcal O_{G(2,H)}(-1)\otimes t^*\mathcal O_{G(3,H)}(1))|_{F_0(Z)}$ is a non-trivial $3$-torsion line bundle.

      Moreover, we readily see that for any $[P]\in F_2(X_Z)$, $X_{6}|_{P}\neq 0$ is an equation of the line $P\cap \mathbb P(H^*)$; \textit{i.e.}, $\pi^*(e^*\mathcal O_{G(2,H)}(-1)\otimes t^*\mathcal O_{G(3,H)}(1))|_{F_0(Z)}$ has a nowhere-vanishing section, hence is trivial.

 (3) When $F_0(Z)$ is smooth, since $\pi$ is \'etale, $F_2(X_Z)$ is also smooth. As $F_2(X_Z)$ is connected (by \cite[Proposition 1.8]{Coll_cub}), $F_2(X_Z)$ is irreducible, and $\pi(F_2(X_Z))=F_0(Z)$ is also irreducible.
\end{proof}

\begin{remarque}\label{rmk_result_GK} That $F_0(Z)$ is smooth and irreducible, for $Z$ general, is proven in \cite[Lemma 4.3]{GK_geom_lines} without reference to $F_2(X_Z)$.
\end{remarque}

In \cite{GK_geom_lines}, the interest for the image $e(F_0(Z))\subset F_1(Z)$ stems from $e(F_0(Z))$ being the fixed locus of a rational self-map of the hyper-K\"ahler $4$-fold $F_1(Z)$ defined by Voisin (\textit{cf.}~\cite{Voisin_map}).

\begin{proposition}\label{prop_image_F_0} For $Z$ general, the tangent map of $e_{F_0}:=e|_{F_0(Z)}\colon F_0(Z)\rightarrow F_1(Z)$ is injective, and $e_{F_0}$ is the normalisation of $e_{F_0}(F_0(Z))$ and is an isomorphism unto its image outside a finite subset of $F_0(Z)$.
  
 Moreover, $e_{F_0}(F_0(Z))$ is a $($non-normal\,$)$ Lagrangian surface of the hyper-K\"ahler $4$-fold $F_1(Z)$.
\end{proposition}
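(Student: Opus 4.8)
The plan is to transfer the results already established for $F_2(X_Z)$ through the étale cover $\pi\colon F_2(X_Z)\to F_0(Z)$ of Proposition~\ref{prop_etale_cover_F_2_F_0}, using the factorisation $e_{F_2}=e_{F_0}\circ\pi$ (more precisely, the commutative square relating $\mathbb P_{F_2(X_Z)}\to F_1(X_Z)$ to $F_0(Z)\hookrightarrow\Fl(2,3,H)\to F_1(Z)$ via the section $Z_H$ and the restriction of $X_Z$ along the hyperplane $\mathbb P(H^*)$). First I would record that the composite $F_2(X_Z)\xrightarrow{\pi} F_0(Z)\xrightarrow{e_{F_0}} F_1(Z)$ is, up to the identification of $F_2(X_Z)$ with the section $Z_H$ inside $\mathbb P_{F_2(X_Z)}$, exactly the morphism $j_H\circ(\text{iso})$ studied in Section~2 (here $Y=X_Z\cap\mathbb P(H^*)=Z$, which by hypothesis contains no plane, so $j_H$ is defined and, by the discussion preceding \eqref{isom_cotang_normal2}, is an immersion that is an isomorphism onto its image outside a $0$-dimensional locus). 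Since $\pi$ is étale, $T\pi$ is an isomorphism at every point, so $Te_{F_0}$ is injective precisely because $Te_{F_2}=T(j_H)\circ T\pi$ is injective on $F_2(X_Z)$, which was proven via Proposition~\ref{prop_immersion_plan_lines} together with the vanishing $H^0(N_{P/X_Z}(-1))=0$ for smooth $F_2(X_Z)$.

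Next, for the normalisation and generic-injectivity statements: $F_0(Z)$ is smooth (Proposition~\ref{prop_etale_cover_F_2_F_0}, or Remark~\ref{rmk_result_GK}), hence normal, and $e_{F_0}$ is a finite morphism onto its image (it is proper with finite fibres, being an immersion of a projective surface); a finite birational morphism from a normal variety onto its image is the normalisation, so it suffices to show $e_{F_0}$ is birational onto $e_{F_0}(F_0(Z))$, i.e.\ generically injective. For this I would argue: if $e_{F_0}$ identified two general points $([\ell],[P])\neq([\ell],[P'])$ of $F_0(Z)$ lying over the same line $[\ell]\in F_1(Z)$, then pulling back along $\pi$ would force two distinct planes of $X_Z$ to share a common line — but the locus $\Gamma\subset\mathbb P_{F_2(X_Z)}$ of lines contained in more than one plane of $X_Z$ has codimension $2$ (established via the rank-$\le 2$ degeneracy computation for the quadric bundle $\widetilde{X_{P_0}}$, the Harris--Tu formula giving $31$ such quadrics), and by the Bertini argument $Z_H\cap\Gamma$ is $0$-dimensional; so generically there is no such identification, and $e_{F_0}$ is generically injective. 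Combining, $e_{F_0}$ is the normalisation of its image and an isomorphism onto its image off a finite set.

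Finally, for the Lagrangian statement: $F_1(Z)$ is a hyper-Kähler fourfold with holomorphic symplectic form $\sigma$; I must show $e_{F_0}^*\sigma=0$ on $F_0(Z)$. Since $e_{F_0}$ is generically an isomorphism onto its image and $e_{F_0}(F_0(Z))$ is a surface, it is enough to check that $\sigma$ restricts to zero on the tangent plane $Te_{F_0}(T_xF_0(Z))\subset T_{e_{F_0}(x)}F_1(Z)$ at a general point $x$. Pulling back by the étale map $\pi$, this is equivalent to $e_{F_2}^*\sigma=0$ on $F_2(X_Z)$ (identified with $Z_H$), i.e.\ to the assertion that $j_H(F_2(X_Z))=j_H(Z_H)$ is a Lagrangian surface of $F_1(Z)$ — and this is exactly the content of \cite[Proposition~4]{Iliev-Manivel_cub_hyp_int_syst}, which was already invoked in Section~2 to obtain \eqref{isom_cotang_normal2} (alternatively, the class computation of \cite{GK_geom_lines} recalled in the Remark following Theorem~\ref{thm_sum_up_var_oscul_planes} gives the same conclusion). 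Non-normality of $e_{F_0}(F_0(Z))$ follows because $e_{F_0}$ is the normalisation map and is not an isomorphism (it contracts nothing but glues along the nonempty — though finite — locus where distinct points of $F_0(Z)$ have the same image, which is nonempty since $Z_H\cap\Gamma\neq\emptyset$ by the positivity of the Harris--Tu number $31$).

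The main obstacle I anticipate is the generic-injectivity step: one must be careful that the finitely many points of $Z_H\cap\Gamma$ really do produce genuine gluings in the image (so that the image is non-normal and $e_{F_0}$ is a nontrivial normalisation) rather than, say, ramification points where $Te_{F_0}$ degenerates — but $Te_{F_0}$ is everywhere injective by the first step, so any failure of injectivity of $e_{F_0}$ is a set-theoretic gluing of distinct points, and conversely one needs that over a general point of $\Gamma\cap Z_H$ the two planes $P,P'$ of $X_Z$ give genuinely different points of $F_0(Z)$ after projection, which holds because $\pi$ is étale and $P\neq P'$.
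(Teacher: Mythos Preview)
Your strategy of transferring results from $F_2(X_Z)$ through the \'etale cover $\pi$ is elegant and does work for two of the three claims. The factorisation $j_H=e_{F_0}\circ\pi$ (with $H=\{X_6=0\}$, so $Y=Z$) is correct, and since the argument in Section~2 showing that $T_{Z_H}\to (e_{F_2}^*T_{F_1(Y)})|_{Z_H}$ is injective only needs $F_2(X_Z)$ smooth and $Z$ smooth containing no plane, your deduction of the injectivity of $Te_{F_0}$ is valid. Likewise, the Lagrangian claim follows from $\pi^*e_{F_0}^*\sigma=j_H^*\sigma=0$ and $\pi$ \'etale, granting Iliev--Manivel's result for this specific pair $(X_Z,H)$ (which is fine by specialisation, or via the class computation you mention). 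This is genuinely different from the paper, which proves both statements by direct incidence-variety dimension counts and an explicit evaluation of the Iliev--Manivel symplectic form in coordinates.

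However, your generic-injectivity argument has a real gap. The Bertini step ``$Z_H\cap\Gamma$ is $0$-dimensional'' from Section~2 was proved for a \emph{general} hyperplane $H$, but here $H=\{X_6=0\}$ is the ramification hyperplane of the cyclic cover and is as non-generic as possible: for \emph{every} $[P]\in F_2(X_Z)$ the line $P\cap H$ is also contained in $\tau(P)$ and $\tau^2(P)$, so $([P\cap H],[P])\in\Gamma$ automatically. Thus $Z_H\subset\Gamma$ for every cyclic cubic $X_Z$, and the intersection is all of $Z_H$, not a finite set. Your argument therefore cannot distinguish the three $\tau$-conjugate planes (which project to the \emph{same} point of $F_0(Z)$ and cause no trouble) from a genuine pair of osculating planes $P\neq P'\subset\mathbb P(H^*)$ with $P\cap Z=P'\cap Z=3\ell$; you would need a separate bound on lines of $Z$ lying in at least \emph{two} $\langle\tau\rangle$-orbits of planes of $X_Z$, which you do not provide. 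The paper circumvents this by working directly in $\Fl(2,3,H)\times|\mathcal O_{\mathbb P^5}(3)|$: it computes $\dim I_2=\dim|\mathcal O_{\mathbb P^5}(3)|$ for the incidence variety of pairs of osculating planes through a common line, giving finiteness of the bad locus for general $Z$ without reference to $X_Z$ at all. Your non-normality argument inherits the same defect, since it rests on the positivity of $|Z_H\cap\Gamma|$ rather than on the existence of distinct points of $F_0(Z)$ with the same image.
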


\begin{proof} (1) That $e_{F_0}$ is injective outside a finite number of points follows from a simple dimension count: let us introduce $I:=\{(([\ell],[P]),[Z])\in \Fl(2,3,H)\times |\mathcal O_{\mathbb P^5}(3)|,\ \ell\subset Z\ \text{and}\ Z\cap P=\ell\ \text{set-theoretically}\}$ and $I_2:=\{(([\ell],[P_1],[P_2]),[Z])\in \mathbb P(\mathcal Q_2)\times_{G(2,H)}\mathbb P(\mathcal Q_2)\backslash \Delta_{\mathbb P(\mathcal Q_2)}\times |\mathcal O_{\mathbb P^5}(3)|,\ \ell\subset Z\ \text{and}\ Z\cap P_i=\ell,\ i=1,2\ \text{set-theoretically}\}$. As $\Fl(2,3,H)$ and $\mathbb P(\mathcal Q_2)\times_{G(2,H)}\mathbb P(\mathcal Q_2)\backslash \Delta_{\mathbb P(\mathcal Q_2)}$ are homogeneous, the fibers of $p\colon I\rightarrow \Fl(2,3,H)$ (resp.\ $p_2\colon I_2\rightarrow \mathbb P(\mathcal Q_2)\times_{G(2,H)}\mathbb P(\mathcal Q_2)\backslash \Delta_{\mathbb P(\mathcal Q_2)}$) are isomorphic to each other and are sub-linear systems of $|\mathcal O_{\mathbb P^5}(3)|$.
  
   Notice that, since $F_0(Z)$ is a surface for $Z$ general, we know that $\dim(I)=\dim(|\mathcal O_{\mathbb P^5}(3)|+2$.

    Let us analyse the fiber of $p_2$. To do so, we can assume $\ell=\{X_2=\cdots=X_5=0\}$, $P_1=\{X_3=X_4=X_5=0\}$ and $P_2=\{X_2=X_4=X_5=0\}$. Then the condition $Z\cap P_1=\ell$ implies that $\eq_Z$ is of the form 
    \begin{equation}\label{normal_form_oscul}\eq_Z=\alpha X_2^3 + X_3Q_3 + X_4Q_4 +X_5Q_5 + \sum_{i=0}^2X_iD_i(X_3,X_4,X_5) + R(X_3,X_4,X_5),
\end{equation} 
where the $Q_i(X_0,X_1,X_2)$ are quadratic forms in $X_0,X_1,X_2$, the $D_i$ are quadratic forms in $X_3,X_4,X_5$ and $R$ is a cubic form in $X_3,X_4,X_5$. Notice that this is the general form of a member of the fiber $p^{-1}([\ell],[P_1])$, in particular, $\dim(p^{-1}([\ell],[P_1]))=\dim(|\mathcal O_{\mathbb P^5}(3)|)+2-\dim(\Fl(2,3,H))=\dim(|\mathcal O_{\mathbb P^5}(3)|)-9$.

 The additional condition $Z\cap P_2=\ell$ implies that $Q_3(X_0,X_1,0)=0$, $D_0(X_3,0,0)=0$, $D_1(X_3,0,0)=0$, which gives $3+1+1=5$ constraints. So  $\dim(p_2^{-1}(([\ell],[P_1],[P_2])))=\dim(p^{-1}([\ell],[P_1]))-5=\dim(|\mathcal O_{\mathbb P^5}(3)|)-14$, hence $\dim(I_2)=\dim(p_2^{-1}(([\ell],[P_1],[P_2])))+2\times 3+\dim(G(2,H))=\dim(|\mathcal O_{\mathbb P^5}(3)|)$. As a result, the general fiber of $I_2\rightarrow |\mathcal O_{\mathbb P^5}(3)|$ is finite. In other words, for $[Z]\in |\mathcal O_{\mathbb P^5}(3)|$ general, there are only finitely many $\ell\subset Z$ such that there are at least two planes $P_1,P_2\subset \mathbb P^5$ such that $Z\cap P_i=\ell$, $i=1,2$, \textit{i.e.}, there is a finite set $\gamma\subset F_0(Z)$ such that $e|_{F_0}\colon F_0(Z)\backslash \gamma\rightarrow F_1(Z)$ is a bijection unto its image.

 (2) Let us give a description of $T_{F_0(Z),([\ell],[P])}$. We recall that the two projective bundle structures on $\Fl(2,3,H)$ given by $e\colon \Fl(2,3,H)\simeq \mathbb P(\mathcal Q_2)\rightarrow G(2,H)$ and $t\colon \Fl(2,3,H)\simeq \mathbb P(\wedge^2\mathcal E_3)\rightarrow G(3,H)$ yield the following descriptions of the tangent bundle:
 $$
 T_{\Fl(2,3,H),([\ell],[P])}\simeq \Hom(\langle\ell\rangle,H/\langle\ell\rangle)\oplus \Hom(\langle P\rangle/\langle\ell\rangle,H/\langle P\rangle)$$ and $$T_{\Fl(2,3,H), ([\ell],[P])}\simeq \Hom(\langle P\rangle,H/\langle P\rangle)\oplus \Hom(\langle \ell\rangle,\langle P\rangle/\langle \ell\rangle).
 $$
The isomorphism between the two takes the  form
\begin{align*}
\Hom\left(\langle\ell\rangle,H/\langle\ell\rangle\right)\oplus \Hom\left(\langle P\rangle/\langle\ell\rangle,H/\langle P\rangle\right) & \longrightarrow \Hom\left(\langle P\rangle,H/\langle P\rangle\right)\oplus \Hom\left(\langle \ell\rangle,\langle P\rangle/\langle \ell\rangle\right),\\
\left(\varphi,\psi\right) & \longmapsto \left(\varphi_\perp+\psi,\varphi_\parallel\right)
\end{align*}
where $\varphi=(\varphi_\parallel,\varphi_\perp)$ is the decomposition corresponding to the choice of a decomposition $H/\langle\ell\rangle\simeq \langle P\rangle/\langle\ell\rangle\oplus H/\langle P\rangle$ coming from a decomposition $\langle P\rangle\simeq \langle\ell\rangle\oplus \langle P\rangle/\langle\ell\rangle$.

Around $([\ell],[P])\in F_0(Z)$, the points of $\Fl(2,3,H)$ are of the form $([(\id_{\langle\ell\rangle}+\varphi)(\langle\ell\rangle)],[(\id_{\langle P\rangle}+\varphi_\perp+\psi)(\langle P\rangle)])$. Let us choose an equation $\lambda\in \langle P\rangle^*$ (a generator of $(\langle P\rangle/\langle\ell\rangle)^*$) of $\ell\subset P$ such that $\eq_Z(x,x,x)=\lambda(x)^3$ for any $x\in \langle P\rangle$.

 The first-order deformation of this equation to an equation of $(\id_{\langle\ell\rangle}+\varphi)(\langle\ell\rangle)\subset (\id_{\langle P\rangle}+\varphi_\perp+\psi)(\langle P\rangle)$ is given by $\lambda-\varphi^*(\lambda)$, so that the point associated to $(\varphi,\psi)$ belongs to $F_0(Z)$ if and only if 
 $$
 \eq_Z(x+\varphi_\perp(x)+\psi(x),x+\varphi_\perp(x)+\psi(x),x+\varphi_\perp(x)+\psi(x))=(1+c(\varphi,\psi))(\lambda(x)-\varphi^*(\lambda)(x))^3\quad \forall x\in \langle P\rangle
 $$
 for some term $c(\varphi,\psi)=O(\varphi,\psi)$ constant on $\langle P\rangle$. So at the first order, we get 
\begin{equation}\label{descript_T_F_0} \eq_Z(x,x,\varphi_\perp(x)+\psi(x))=-\lambda(x)^2\varphi^*(\lambda)(x)+\frac 1 3 c(\varphi,\psi)\lambda(x)^3\quad \forall x\in\langle P\rangle.
\end{equation}
The differential of the projection $e_{F_0(Z)}\colon F_0(Z)\rightarrow F_1(Z)$ is simply given by $(\varphi,\psi)\mapsto \varphi$.

 Let us introduce
$$
J:=\{(([\ell],[P]),[Z])\in \Fl(2,3,H)\times |\mathcal O_{\mathbb P^5}(3)|,\ \ell\subset Z,\ Z\cap P=\ell\ \text{and}\ T_{([\ell],[P])}e|_{F_0} \text{is\ not\ injective}\}
$$
and analyse the fibers of $p_J\colon J\rightarrow \Fl(2,3,H)$, which are isomorphic to each other by the homogeneity of $\Fl(2,3,H)$.

So we can assume $\ell=\{X_2=\cdots=X_5=0\}$ and $P=\{X_3=\cdots=X_5=0\}$, so that $\eq_Z$ is of the form (\ref{normal_form_oscul}) with $Q_i=a_iX_0^2+b_iX_1^2+c_iX_2^2+d_iX_0X_1+e_iX_0X_2+f_iX_1X_2$, $i=3,4,5$, for some $a_i,\dots,f_i$. We recall that for $\varphi=\left(\begin{smallmatrix}u_2 &v_2\\ u_3 &v_3\\ u_4 &v_4\\ u_5 &v_5\end{smallmatrix}\right)\in \Hom(\langle\ell\rangle,H/\langle\ell\rangle)$ and $\psi=\left(\begin{smallmatrix}w_3\\ w_4\\ w_5\end{smallmatrix}\right)\in \Hom(\langle P\rangle/\langle\ell\rangle,H/\langle P\rangle)$, the associated subspaces are
    $$
    \ell_{(\varphi,\psi)}=[\lambda,\mu,\lambda u_2+\mu v_2,\dots,\lambda u_5+\mu v_5],\quad [\lambda,\mu]\in \mathbb P^1, 
    $$
    $$
    P_{(\varphi,\psi)}=[\lambda,\mu,\nu,\lambda u_3+\mu v_3+\nu w_3, \lambda u_4+\mu v_4+\nu w_4,\lambda u_5+\mu v_5+\nu w_5],\quad [\lambda,\mu,\nu]\in \mathbb P^2.
    $$

 Now, if $(0,\psi)\in T_{F_0(Z),([\ell],[P])}$, we have at the first order 
 $$
 \begin{aligned} \eq_{Z}|_{P_{(0,\psi)}}&=\alpha\nu^3 +\sum_{i=3}^5\nu w_i(a_i\lambda^2+b_i\mu^2+c_i\nu^2+d_i\lambda\mu+e_i\lambda\nu+f_i\mu\nu)+O\left(\left(\varphi,\psi\right)^2\right)\\
&=\left(\alpha+c_3w_3+c_4w_4+c_5w_5\right)\nu^3+\left[\left(e_3w_3+e_4w_4+e_5w_5\right)\lambda+\left(f_3w_3+f_4w_4+f_5w_5\right)\mu\right]\nu^2\\
&\ \ +\left(a_3w_3+a_4w_4+a_5w_5\right)\lambda^2\nu +\left(b_3w_3+b_4w_4+b_5w_5\right)\mu^2\\
   &\ \ +\left(d_3w_3+d_4w_4+d_5w_5\right)\lambda\mu\nu+ O\left(\left(\varphi,\psi\right)^2\right)\end{aligned}
 $$
so that looking at (\ref{descript_T_F_0}), we see that $(0,\psi)\in T_{F_0(Z),([\ell],[P])}$ if and only if
$$
\rank\left(\begin{smallmatrix}a_3 &a_4 &a_5\\ b_3 &b_4 &b_5\\ d_3 &d_4 &d_5\\ e_3 &e_4 &e_5\\ f_3 &f_4 &f_5\end{smallmatrix}\right)\leq 2, 
  $$
  which defines a subset of codimension $(3-2)(5-2)=3$.

   So $J\subset I$ has codimension $3$. As $\dim(I)=\dim(|\mathcal O_{\mathbb P^5}(3)|)+2$, $J$ does not dominate $|\mathcal O_{\mathbb P^5}(3)|$; \textit{i.e.}, for the general $Z$, $e_{F_0}$ is an immersion.

(3) Let us prove that $e_{F_0}(F_0(Z))$ is a Lagrangian surface of $F_1(Z)$. In \cite{Iliev-Manivel_cub_hyp_int_syst}, the following explicit description of the symplectic form $\mathbb C\cdot \Omega=H^{2,0}(F_1(Z))$ is given: let us introduce the following quadratic form on $\wedge^2T_{F_1(Z),[\ell]}$ with values in $\Hom((\wedge^2\langle\ell\rangle)^{\otimes 2},\wedge^4(H/\langle\ell\rangle)$: 
   $$
   \begin{aligned}
     K(u\wedge v,u'\wedge v')&= u(x)\wedge u'(y)\wedge v(x)\wedge v'(y) - u(y)\wedge u'(y)\wedge v(x)\wedge v'(x)\\
     &\ \ \ +u(y)\wedge u'(x)\wedge v(y)\wedge v'(x) - u(x)\wedge u'(x)\wedge v(y)\wedge v'(y), 
   \end{aligned}
   $$
   where $(x,y)$ is a basis of $\langle\ell\rangle$. Let us also introduce the following skew-symmetric form:  
\begin{alignat*}{2}
\omega\colon & \wedge^2T_{F_1(Z),[\ell]} &\ \longrightarrow \ & (\wedge^2\langle\ell\rangle)^{\otimes 3}\\
&  u\wedge v &\ \longmapsto \ &\eq_Z(x,x,u(y))\eq_Z(y,y,v(x)) -\eq_Z(x,x,v(y))\eq_Z(y,y,u(x))\\
&&& \  +2\eq_Z(x,y,u(y))\eq_Z(x,x,v(y))-2\eq_Z(x,x,u(y))\eq_Z(x,y,v(y))\\
&&& \  +2\eq_Z(y,y,u(x))\eq_Z(x,y,v(x))-2\eq_Z(x,y,u(x))\eq_Z(y,y,v(x)).
\end{alignat*}
According to \cite[Theorem 1]{Iliev-Manivel_cub_hyp_int_syst}, for $u,v\in T_{F_1(Z),[\ell]}$,
$$
K(u\wedge v,u\wedge v)=w(u\wedge v)\Omega_{[\ell]}(u,v).
$$
As for a general point $([\ell],[P])\in F_0(Z)$, $\ell\subset Z$ is of the first type; \textit{i.e.}, in reference to the above presentation (\ref{normal_form_oscul}) for $\ell=\{X_2=\cdots=X_5=0\}$, $P=\{X_3=X_4=X_5=0\}$, $\left|\begin{smallmatrix}a_3 &b_3 &d_3\\
a_4 &b_4 &d_4\\ a_5 &b_5 &d_5\end{smallmatrix}\right|\neq 0$, it is sufficient to prove the vanishing of $\Omega_{[\ell]}(\Im(T_{([\ell],[P])}e_{F_0}),\Im(T_{([\ell],[P])}e_{F_0}))$ for such a line. So we can assume $\alpha=1$ and
$$
\begin{aligned}Q_3&=X_0^2+e_3X_0X_2+f_3X_1X_2+c_3X_2^2,\\
Q_4&=X_0X_1+e_4X_0X_2+f_4X_1X_2+c_4X_2^2,\\
Q_5&=X_1^2+e_5X_0X_2+f_5X_1X_2+c_5X_2^2.
\end{aligned}
$$ 
Then as above, for $\varphi=\left(\begin{smallmatrix}u_2 &v_2\\ u_3 &v_3\\ u_4 &v_4\\ u_5 &v_5\end{smallmatrix}\right)\in \Hom(\langle\ell\rangle,H/\langle\ell\rangle)$ and $\psi=\left(\begin{smallmatrix}w_3\\ w_4\\ w_5\end{smallmatrix}\right)\in \Hom(\langle P\rangle/\langle\ell\rangle,H/\langle P\rangle)$, we have
    $$
    \begin{aligned}
      \eq_{Z}|_{P_{(\varphi,\psi)}} &= \nu^3+\sum_{i=3}^5(\lambda u_i+\mu v_i+\nu w_3)Q_i+O((\varphi,\psi)^2)\\
&=(1+c_3w_3+c_4w_4+c_5w_5)\nu^3+(c_3u_3+e_3w_3+c_4u_4+e_4w_4+c_5u_5+e_5w_5)\lambda\nu^2\\
&\ \ + (c_3v_3+f_3w_3+c_4v_4+f_4w_4+c_5v_5+b_5w_5)\mu\nu^2\\
&\ \ + (w_3+e_3u_3+e_4u_4+e_5u_5)\lambda^2\nu +(w_5+f_3v_3+f_4v_4+f_5v_5)\mu\nu^2\\
&\ \ +(w_4+f_3u_3+e_3v_3+f_4u_4+e_4v_4+f_5u_5+e_5v_5)\lambda\mu\nu\\
      &\ \ +u_3\lambda^3+v_5\mu^2+(v_4+u_5)\lambda\mu^2 +(v_3+u_4)\lambda^2\mu + O\left((\varphi,\psi)^2\right),
    \end{aligned}$$
so that the description (\ref{descript_T_F_0}) of $T_{F_0(Z),([\ell],[P])}$ yields
$$\left\{\begin{aligned} c_3u_3+e_3w_3+c_4u_4+e_4w_4+c_5u_5+e_5w_5 &=-u_2\\
c_3v_3+f_3w_3+c_4v_4+f_4w_4+c_5v_5+b_5w_5 & = - v_2\\
w_3+e_3u_3+e_4u_4+e_5u_5 &= 0\\
w_5+f_3v_3+f_4v_4+f_5v_5 &=0\\
w_4+f_3u_3+e_3v_3+f_4u_4+e_4v_4+f_5u_5+e_5v_5 &=0\\
v_4 =-u_5;\ v_3 =-u_4\ u_3=0\ v_5 &=0.
\end{aligned} \right.
$$
The seven last equations yield $w_3=-(e_4u_4+e_5u_5)$, $w_4=(e_3-f_4)u_4+(e_4-f_5)u_5$, $w_5=f_3u_4+ f_4u_5$. Thus the first two give a system
$$\left\{\begin{aligned}\alpha u_4 + \beta u_5=-u_2,\\ -\delta u_4- \alpha u_5=-v_2,\end{aligned} \right.$$ 
 where $\alpha=c_4-e_4f_4+e_5f_3$, $\beta=c_5-e_3e_5+e_4^2-e_4f_5+e_5f_4$ and $\delta=e_3f_4-f_4^2-e_4f_3+f_3f_5-c_3$. In particular, the determinant $\Delta=-\alpha^2-\beta\delta$ of the $2\times 2$ system is non-zero for a general choice of the $(e_i,f_i,c_i)$ and
$u_4=\frac{1}{\Delta}(\alpha u_2+\beta v_2)$, 
$u_5=\frac{1}{\Delta}(\delta u_2-\alpha v_2)$.
So a basis of $T_{F_1(Z),([\ell],[P])}$ is given by ($(u_2=1,v_2=0)$ and $(u_2=0,v_2=1)$): 
\begin{alignat*}{2}
\varphi_{u_2}\colon  &\epsilon_0 &\ \longmapsto \ &\epsilon_2 +\frac{\alpha}{\Delta}\epsilon_4 +\frac{\delta}{\Delta}\epsilon_5,\\
  &\epsilon_1 &\ \longmapsto \ &-\frac{\alpha}{\Delta}\epsilon_3-\frac{\delta}{\Delta}\epsilon_4
\end{alignat*}
and 
\begin{alignat*}{2}
\varphi_{v_2}\colon  &\epsilon_0 &\ \longmapsto \ &\frac{\beta}{\Delta}\epsilon_4 -\frac{\alpha}{\Delta}\epsilon_5,\\
  &\epsilon_1 &\ \longmapsto \ &\epsilon_2-\frac{\beta}{\Delta}\epsilon_3+\frac{\alpha}{\Delta}\epsilon_4, 
\end{alignat*}
where $(\epsilon_0,\dots,\epsilon_5)$ is the (dual) basis associated to the choice of the coordinates $X_i$. Then we readily compute $$K(\varphi_{u_2}\wedge\varphi_{v_2})=\left|\begin{matrix}1 &0 &0 &1\\ 0 &-\frac{\alpha}{\Delta} &0 &-\frac{\beta}{\Delta}\\ \frac{\alpha}{\Delta} &-\frac{\delta}{\Delta} &\frac{\beta}{\Delta} &\frac{\alpha}{\Delta}\\ \frac{\delta}{\Delta} &0 &-\frac{\alpha}{\Delta} &0\end{matrix}\right|=0$$
and $\omega(\varphi_{u_2}\wedge\varphi_{v_2})=\frac{5}{\Delta}\neq 0$, hence $\Omega_{[\ell]}(\varphi_{u_2},\varphi_{v_2})=0$.
\end{proof}

\begin{remarque}\label{rmk_result_GK2} In \cite{GK_geom_lines}, it is also proven that $F_0(Z)\rightarrow e(F_0(Z))$ is the normalisation and that $e(F_0(Z))$ has $3780$ non-normal isolated singularities.
\end{remarque}

As for $Z$ general, $e_{F_0}$ is an immersion, $N_{F_0(Z)/F_1(Z)}:=e_{F_0}^*T_{F_1(Z)}/T_{F_0(Z)}$ is locally free. Moreover, since $e_{F_0}$ is, outside a codimension $2$ subset of $F_0(Z)$, an isomorphism unto its image and that image is a Lagrangian subvariety of $F_1(Z)$, we get (outside a codimension $2$ subset, thus globally) an isomorphism $$\Omega_{F_0(Z)}\simeq N_{F_0(Z)/F_1(Z)}.$$
\indent Notice that $F_0(Z)$ naturally lives in $\mathbb P(\mathcal Q_{2}|_{F_1(Z)})\subset \Fl(2,3,H)$. We have the following. 

\begin{lemme}\label{lem_normal_bdl_intermediate} The following sequence is exact:
  $$
  \begin{aligned}0\longrightarrow e_{F_1}^*\mathcal O_{F_1(Z)}(-3)\otimes t_{F_1}^*\mathcal O_{G(3,H)}(3)\longrightarrow e_{F_1}^*\mathcal O_{F_1}(-1)\otimes t_{F_1}^*(\Sym^2\mathcal E_3\otimes\mathcal O_{G(3,H)}(1))|_{F_1(Z)}\\\longrightarrow N_{F_0(Z)/\mathbb P(\mathcal Q_{2}|_{F_1(Z)})}\longrightarrow 0,\end{aligned}
    $$
    where $e_{F_1}\colon \mathbb P(\mathcal Q_{2}|_{F_1(Z)})\rightarrow F_1(Z)$ and $t_{F_1}\colon \mathbb P(\mathcal Q_{2}|_{F_1(Z)})\rightarrow G(3,H)$. 
\end{lemme}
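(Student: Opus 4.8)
The plan is to realise $F_0(Z)$ as the zero locus of a \emph{regular} section of an explicit rank-$5$ bundle on $\mathbb{P}(\mathcal Q_2|_{F_1(Z)})$ and then read the normal bundle off directly. Write $L:=e^*\mathcal O_{G(2,H)}(-1)\otimes t^*\mathcal O_{G(3,H)}(1)$ on $\Fl(2,3,H)$; by the discussion after \eqref{ex_seq_def_F} this is the line bundle of linear equations of $\ell\subset P$, and by \eqref{ex_seq_taut_bundles_2_3} it sits as a sub-line-bundle $L\hookrightarrow t^*\mathcal E_3$. Taking third symmetric powers of \eqref{ex_seq_taut_bundles_2_3} I would use the first step of the resulting filtration,
\[
0\longrightarrow L\otimes t^*\Sym^2\mathcal E_3\longrightarrow t^*\Sym^3\mathcal E_3\longrightarrow \Sym^3 e^*\mathcal E_2\longrightarrow 0,
\]
together with the sequence obtained by tensoring $0\to L^2\to \Sym^2 t^*\mathcal E_3\to \Sym^2 t^*\mathcal E_3/L^2\to 0$ by $L$,
\[
0\longrightarrow L^3\longrightarrow L\otimes t^*\Sym^2\mathcal E_3\longrightarrow L\otimes\bigl(t^*\Sym^2\mathcal E_3/L^2\bigr)\longrightarrow 0,
\]
the last term having rank $5$. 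Recall that $F_0(Z)\subset \Fl(2,3,H)$ is the vanishing locus of the image $\overline{\sigma_{\eq_Z}}$ in $\mathcal F$ of $\sigma_{\eq_Z}\in H^0(t^*\Sym^3\mathcal E_3)$, that $\mathcal F=t^*\Sym^3\mathcal E_3/L^3$, and that $P\cap Z=\ell$ forces $\ell\subset Z$, so $F_0(Z)\subset e^{-1}(F_1(Z))=\mathbb{P}(\mathcal Q_2|_{F_1(Z)})$.

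The key step is the following. Along $\mathbb{P}(\mathcal Q_2|_{F_1(Z)})$ the image of $\sigma_{\eq_Z}$ in $\Sym^3 e^*\mathcal E_2$ is the pull-back of the section of $\Sym^3\mathcal E_2$ cutting out $F_1(Z)\subset G(2,H)$, hence vanishes there; equivalently, $\ell$ being contained in $Z$, one may write $\eq_Z|_P=\lambda\cdot Q_{([\ell],[P])}$. Thus $\sigma_{\eq_Z}$ restricted to $\mathbb{P}(\mathcal Q_2|_{F_1(Z)})$ lifts canonically to a section $\widetilde\sigma$ of $L\otimes t^*\Sym^2\mathcal E_3$. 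By the definition of $F_0(Z)$ — namely $P\cap Z=\ell$ set-theoretically, i.e.\ $Q_{([\ell],[P])}$ proportional to $\lambda^2$, i.e.\ $Q_{([\ell],[P])}\in L^2$ — and because $Z$ contains no plane (so $\eq_Z|_P\neq 0$ excludes the degenerate case $Q=0$), $F_0(Z)$ is exactly the zero locus inside $\mathbb{P}(\mathcal Q_2|_{F_1(Z)})$ of the image $\overline\sigma\in H^0\bigl(L\otimes(t^*\Sym^2\mathcal E_3/L^2)\bigr)$ of $\widetilde\sigma$; scheme-theoretically this is clean because $\mathcal F\to\Sym^3 e^*\mathcal E_2$ is a surjection of bundles, so $4$ of the $9$ local equations of $F_0(Z)$ in $\Fl(2,3,H)$ can be taken to cut out $\mathbb{P}(\mathcal Q_2|_{F_1(Z)})$ and the remaining $5$ are the components of $\overline\sigma$.

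To conclude I would check regularity: $L\otimes(t^*\Sym^2\mathcal E_3/L^2)$ has rank $5$, $\dim\mathbb{P}(\mathcal Q_2|_{F_1(Z)})=\dim F_1(Z)+3=7$, and for $Z$ general $F_0(Z)$ is a smooth surface (Remark \ref{rmk_result_GK}, or Proposition \ref{prop_etale_cover_F_2_F_0}); hence $\overline\sigma$ is a regular section and $N_{F_0(Z)/\mathbb{P}(\mathcal Q_2|_{F_1(Z)})}\simeq \bigl(L\otimes(t^*\Sym^2\mathcal E_3/L^2)\bigr)|_{F_0(Z)}$. Restricting the second displayed short exact sequence to $F_0(Z)$, inserting this isomorphism, and rewriting $L^3=e_{F_1}^*\mathcal O_{F_1(Z)}(-3)\otimes t_{F_1}^*\mathcal O_{G(3,H)}(3)$ and $L\otimes t^*\Sym^2\mathcal E_3=e_{F_1}^*\mathcal O_{F_1}(-1)\otimes t_{F_1}^*(\Sym^2\mathcal E_3\otimes\mathcal O_{G(3,H)}(1))|_{F_1(Z)}$ then yields exactly the asserted exact sequence. (Alternatively, the same identification of $N_{F_0(Z)/\mathbb{P}(\mathcal Q_2|_{F_1(Z)})}$ follows from the standard sequence $0\to N_{F_0(Z)/\mathbb{P}(\mathcal Q_2|_{F_1(Z)})}\to \mathcal F|_{F_0(Z)}\to e^*\Sym^3\mathcal E_2|_{F_0(Z)}\to 0$, once one observes that the right-hand arrow is the restriction of the natural surjection $\mathcal F\twoheadrightarrow\Sym^3 e^*\mathcal E_2$ coming from the filtration above.)

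I expect the main obstacle to be precisely the second paragraph: locating the defining section of $F_0(Z)$ in the symmetric-power filtration — translating ``$\ell\subset Z$'' into ``$\sigma_{\eq_Z}$ lifts into $L\otimes t^*\Sym^2\mathcal E_3$'' and ``$P\cap Z=\ell$ set-theoretically'' into ``its further image in the rank-$5$ quotient vanishes'' — and checking that these lifts are compatible with the global bundle maps, so that one genuinely obtains a short exact sequence of locally free sheaves rather than a merely fibrewise statement; the regularity and the bookkeeping of line bundles in the last paragraph are routine.
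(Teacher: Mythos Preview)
Your proposal is correct and follows essentially the same approach as the paper: both identify the rank-$5$ quotient $L\otimes(t^*\Sym^2\mathcal E_3/L^2)$ (equivalently, the cokernel of $L^3\hookrightarrow L\otimes t^*\Sym^2\mathcal E_3$) as the bundle on $\mathbb P(\mathcal Q_2|_{F_1(Z)})$ whose section induced by $\eq_Z$ cuts out $F_0(Z)$, then read off the normal bundle. The paper packages the two filtration steps you wrote out into a single commutative diagram with rows $0\to L^3\to L^3\to 0$ and $0\to L\otimes t^*\Sym^2\mathcal E_3\to t^*\Sym^3\mathcal E_3\to e^*\Sym^3\mathcal E_2\to 0$ and invokes the snake lemma; your version is more explicit (in particular about the regularity via the dimension count and about the scheme-theoretic identification), but the underlying argument is the same.
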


\begin{proof} We have seen that $F_0(Z)\subset \Fl(2,3,H)$ is the zero locus of a section of $\mathcal F$ appearing in the sequence (\ref{ex_seq_def_F}). Taking the symmetric power of (\ref{ex_seq_taut_bundles_2_3}), we have the following commutative diagram with exact rows:  
  $$
  \xymatrix@-1pc{
    0\ar[r] &e^*\mathcal O_{G(2,H)}(-3)\otimes t^*\mathcal O_{G(3,H)}(3)\ar[r]\ar[d] &e^*\mathcal O_{G(2,H)}(-3)\otimes t^*\mathcal O_{G(3,H)}(3)\ar[r]\ar[d] &0\ar[r]\ar[d] &0\\
    0\ar[r] &e^*\mathcal O_{G(2,H)}(-1)\otimes t^*(\Sym^2\mathcal E_3\otimes \mathcal O_{G(3,H)}(1))\ar[r] &t^*\Sym^3\mathcal E_3\ar[r] &e^*\Sym^2\mathcal E_2\ar[r] &0.}
  $$
  The projection to $e^*\Sym^2\mathcal E_2$ of the section $\sigma_{\eq_Z}\in H^0(t^*\Sym^3E_3)$ induced by $\eq_Z$ vanishes on $F_1(Z)$ by the definition of $F_1(Z)$. So it induces a section of
  $$e_{F_1}^*\mathcal O_{F_1(Z)}(-1)\otimes t_{F_1}^*(\Sym^2\mathcal E_3\otimes \mathcal O_{G(3,H)}(1))\simeq (e^*\mathcal O_{G(2,H)}(-1)\otimes t^*(\Sym^2\mathcal E_3\otimes \mathcal O_{G(3,H)}(1)))|_{\mathbb P(\mathcal Q_{2}|_{F_1(Z)})}.
  $$
  Now the snake lemma in the above diagram gives the result.
\end{proof}

The snake lemma in the diagram with exact rows
$$\xymatrix@-1pc{
  0\ar[r] &T_{F_0(Z)}\ar[r]\ar[d]^{\cong} &T_{\mathbb P(\mathcal Q_{2}|_{F_1(Z)})|_{F_0(Z)}}\ar[r]\ar[d] &N_{F_0(Z)/\mathbb P(\mathcal Q_{2}|_{F_1(Z)})}\ar[r]\ar[d] &0\\
0\ar[r] &T_{F_0(Z)}\ar[r] &e_{F_0}^*T_{F_1(Z)}\ar[r] &N_{F_0(Z)/F_1(Z)}\ar[r] &0}$$
and the description of the relative tangent bundle of $e_{F_1}$ give the following. 

\begin{proposition}\label{prop_cotgt_bundle_ex_seq_F_0} The following sequence is exact: 
  $$
  \begin{aligned}0\longrightarrow \mathcal O_{F_0}\longrightarrow e_{F_0}^*(\mathcal Q_{2}|_{F_1(Z)}\otimes \mathcal O_{F_1(Z)}(-1))\otimes t_{F_0}^*(\mathcal O_{G(3,H)}(1))|_{F_0}\longrightarrow N_{F_0(Z)/\mathbb P(\mathcal Q_{2}|_{F_1(Z)})}\longrightarrow \Omega_{F_0(Z)}\longrightarrow 0.\end{aligned}
  $$
\end{proposition}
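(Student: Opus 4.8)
The plan is to build the four-term sequence by splicing together two exact sequences already available, after a single application of the snake lemma. First I would run the snake lemma on the commutative diagram with exact rows displayed just above the statement. Its left vertical arrow is the identity of $T_{F_0(Z)}$, hence an isomorphism; its middle vertical arrow is the restriction to $F_0(Z)$ of the differential of $e_{F_1}\colon \mathbb P(\mathcal Q_2|_{F_1(Z)})\to F_1(Z)$, which is a smooth morphism (a $\mathbb P^3$-bundle projection), so this arrow is surjective with kernel the restriction $T_{\mathbb P(\mathcal Q_2|_{F_1(Z)})/F_1(Z)}|_{F_0(Z)}$ of the relative tangent bundle. Since the cokernel of the middle vertical arrow is zero, the snake lemma yields
\[
0\longrightarrow T_{\mathbb P(\mathcal Q_2|_{F_1(Z)})/F_1(Z)}|_{F_0(Z)}\longrightarrow N_{F_0(Z)/\mathbb P(\mathcal Q_2|_{F_1(Z)})}\longrightarrow N_{F_0(Z)/F_1(Z)}\longrightarrow 0 .
\]

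Second, I would replace $N_{F_0(Z)/F_1(Z)}$ by $\Omega_{F_0(Z)}$, using the isomorphism recorded just before Lemma \ref{lem_normal_bdl_intermediate}: for general $Z$ the map $e_{F_0}$ is an immersion (Proposition \ref{prop_image_F_0}) and its image is Lagrangian in $F_1(Z)$, so that $N_{F_0(Z)/F_1(Z)}\simeq \Omega_{F_0(Z)}$ away from a codimension-$2$ locus and hence globally, both sheaves being locally free.

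Third, I would describe $T_{\mathbb P(\mathcal Q_2|_{F_1(Z)})/F_1(Z)}$. Restricting to $\mathbb P(\mathcal Q_2|_{F_1(Z)})=e^{-1}(F_1(Z))$, and then pulling back to $F_0(Z)$, the relative Euler sequence of the projective bundle $e\colon \Fl(2,3,H)\simeq \mathbb P(\mathcal Q_2)\to G(2,H)$ employed in the proof of Proposition \ref{prop_immersion_plan_lines}, namely
\[
0\longrightarrow \mathcal O \longrightarrow e^*(H/\mathcal E_2^*)\otimes \mathcal O_e(1)\longrightarrow T_{\Fl(2,3,H)/G(2,H)}\longrightarrow 0 ,
\]
and using $\mathcal O_e(1)\simeq t^*\mathcal O_{G(3,H)}(1)\otimes e^*\mathcal O_{G(2,H)}(-1)$, I would rewrite its middle term as $e_{F_0}^*(\mathcal Q_2|_{F_1(Z)}\otimes \mathcal O_{F_1(Z)}(-1))\otimes t_{F_0}^*\mathcal O_{G(3,H)}(1)$. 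Splicing this short exact sequence into the one obtained in the first two steps (replacing the subsheaf $T_{\mathbb P(\mathcal Q_2|_{F_1(Z)})/F_1(Z)}|_{F_0(Z)}$ by its two-term resolution) produces exactly the asserted sequence; the alternating sum of ranks $1-4+5-2=0$ is a consistency check.

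I do not expect a genuine obstacle: once $N_{F_0(Z)/F_1(Z)}\simeq \Omega_{F_0(Z)}$ is granted, the statement is a diagram chase plus the concatenation of two standard exact sequences. The two points needing care are that the middle vertical arrow of the displayed diagram is surjective — this is only the smoothness of a projective bundle projection, but it is what guarantees a four-term rather than a longer sequence — and the bookkeeping of the twists by $\mathcal O_e(1)$, $\mathcal O_{G(2,H)}(1)$ and $\mathcal O_{G(3,H)}(1)$ along the Euler sequence, handled through the relations between tautological quotient line bundles on $\Fl(2,3,H)$ recalled at the start of Section 1.
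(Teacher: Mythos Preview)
Your proposal is correct and follows exactly the route indicated by the paper: apply the snake lemma to the displayed diagram (the left vertical being an isomorphism and the middle vertical being surjective with kernel the relative tangent bundle of the $\mathbb P^3$-bundle $e_{F_1}$), identify $N_{F_0(Z)/F_1(Z)}\simeq \Omega_{F_0(Z)}$ via the Lagrangian property, and splice in the relative Euler sequence with the line-bundle bookkeeping $\mathcal O_e(1)\simeq t^*\mathcal O_{G(3,H)}(1)\otimes e^*\mathcal O_{G(2,H)}(-1)$. The paper's one-line proof (``snake lemma \ldots\ and the description of the relative tangent bundle of $e_{F_1}$'') is precisely what you have unpacked.
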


We finish this section by computing the Hodge numbers of $F_0(Z)$.

\begin{proposition}\label{prop_h_1_F_0} We have $H^1(F_0(Z),\mathbb Z)=0$ for any $Z$ for which $F_0(Z)$ is smooth.
\end{proposition}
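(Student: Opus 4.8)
The plan is to reduce the assertion to the vanishing $h^1(\mathcal O_{F_0(Z)})=0$ and to deduce that from the \'etale cover of Proposition~\ref{prop_etale_cover_F_2_F_0} together with the description of $H^1(\mathcal O_{F_2})$ given by Theorem~\ref{thm_descrip_h_1_and_wedge}. Since $H^1(F_0(Z),\mathbb Z)$ is torsion-free and $F_0(Z)$ is a smooth projective surface, we have $H^1(F_0(Z),\mathbb Z)=0\iff b_1(F_0(Z))=0\iff h^1(\mathcal O_{F_0(Z)})=0$, so it is enough to prove the last equality.

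Write $X_Z\subset\mathbb P(V^*)$ with $V=V_0\oplus\mathbb C\cdot p_0$, where $p_0=[0:\cdots:0:1]$ and $V_0$ is the hyperplane defined by $X_6$, so that $Z\subset\mathbb P(V_0^*)$. The cyclic automorphism $\sigma$ of $X_Z$ over $\mathbb P^5$ is induced by $\sigma|_{V_0}=\id$, $\sigma(p_0)=\xi p_0$ for a primitive cube root of unity $\xi$, and $\langle\sigma\rangle$ is the Galois group of the degree $3$ \'etale cover $\pi\colon F_2(X_Z)\to F_0(Z)$ by Proposition~\ref{prop_etale_cover_F_2_F_0}. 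Hence $\pi_*\mathcal O_{F_2(X_Z)}$ decomposes into its $\langle\sigma\rangle$-isotypic line bundles, with trivial summand $\mathcal O_{F_0(Z)}$, and therefore $h^1(\mathcal O_{F_0(Z)})=\dim H^1(\mathcal O_{F_2(X_Z)})^{\langle\sigma\rangle}$. (Alternatively one could compute $h^1(\mathcal O_{F_0(Z)})$ directly from the Koszul resolution of $\mathcal O_{F_0(Z)}$ in $\Fl(2,3,H)$ attached to the bundle $\mathcal F$ of \eqref{ex_seq_def_F} and the Borel--Weil--Bott theorem, as was done for $F_2(X)$; the argument via $\sigma$ is shorter.)

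As $F_0(Z)$ is smooth, $F_2(X_Z)$ is a smooth irreducible surface and $X_Z$ is smooth, so Theorem~\ref{thm_descrip_h_1_and_wedge} identifies $H^1(\mathcal O_{F_2(X_Z)})$ with $\ker\bigl(\varphi_{\eq_{X_Z}}\otimes\id_{\det V}\colon\Sym^2V\otimes\det V\to V^*\otimes\det V\bigr)$. Since $\sigma^*X_6=\xi X_6$ and $\xi^3=1$, the cubic $\eq_{X_Z}=X_6^3-\eq_Z$ is $\sigma$-invariant, so this sequence is $\langle\sigma\rangle$-equivariant and the induced action coincides with the geometric one. I would then decompose into $\sigma$-eigenspaces: $\Sym^2V=\Sym^2V_0\oplus V_0\cdot p_0\oplus\mathbb C\cdot p_0^2$ has eigenvalues $1,\xi,\xi^2$, while $V^*=V_0^*\oplus\mathbb C\cdot X_6$ has eigenvalues $1,\xi^2$. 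On the eigenvalue-$1$ part, $\varphi_{\eq_{X_Z}}$ restricts to the second polar map $\Sym^2V_0\to V_0^*$, $vv'\mapsto\eq_Z(v,v',\cdot)$, which is surjective because $Z$ is smooth (a nonzero vector in the dual of its cokernel would be a point of $Z$ of multiplicity $\ge 2$); its kernel has dimension $21-6=15$. On the eigenvalue-$\xi$ part $V_0\cdot p_0$ the target $(V^*)_\xi$ vanishes, so $V_0\cdot p_0\subset\ker$, of dimension $6$; on the eigenvalue-$\xi^2$ part, $\varphi_{\eq_{X_Z}}(p_0^2)=X_6\ne 0$, so the kernel there is $0$. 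Thus $\ker\varphi_{\eq_{X_Z}}$ has eigenvalues $1$ (dimension $15$) and $\xi$ (dimension $6$); tensoring with $\det V$, whose $\sigma$-eigenvalue is $\xi$, shifts these to $\xi$ and $\xi^2$. Hence $H^1(\mathcal O_{F_2(X_Z)})^{\langle\sigma\rangle}=0$, so $h^1(\mathcal O_{F_0(Z)})=0$, which proves the proposition.

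The point requiring care is the equivariance and, above all, the $\det V$-twist: the Borel--Weil--Bott output $\Gamma^{(3,1,\dots,1)}V$ of Theorem~\ref{thm_descrip_h_1_and_wedge} really is $\Sym^2V\otimes\det V$ as a $GL(V)$-module, and $\det V$ has $\sigma$-eigenvalue $\xi$; it is exactly this extra factor that makes the invariant part vanish, and dropping it would spuriously produce a $15$-dimensional invariant part. Note also that the only property of $Z$ used beyond smoothness is the surjectivity of the second polar map $\Sym^2V_0\to V_0^*$, which holds for every smooth cubic, so the argument applies to any $Z$ with $F_0(Z)$ smooth, not merely to the general one.
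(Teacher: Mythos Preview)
Your argument is correct and takes a genuinely different route from the paper. The paper proves the vanishing via the Abel--Jacobi map: it extends Collino's isomorphism $q_*p^*\colon H^3(F_2(X),\mathbb Q)\shortisor H^5(X,\mathbb Q)$ to the cyclic cubic $X_Z$ by a local-system argument, and then observes that for $[P]\in F_0(Z)$ the cycle $q(p^{-1}\pi^{-1}([P]))=\Span(P,p_0)\cap X_Z$ moves in the rationally parametrised family $G(3,H)$, so $q_*p^*\pi^*\colon H^3(F_0(Z),\mathbb Q)\to H^5(X_Z,\mathbb Q)$ is zero; since $\pi^*$ is injective and $q_*p^*$ is an isomorphism, $H^3(F_0(Z),\mathbb Q)=0$. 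You instead compute $h^1(\mathcal O_{F_0(Z)})$ directly as the $\langle\sigma\rangle$-invariants in $H^1(\mathcal O_{F_2(X_Z)})$, and then use the $GL(V)$-equivariant identification of Theorem~\ref{thm_descrip_h_1_and_wedge} to reduce this to an eigenspace count on $\ker(\varphi_{\eq_{X_Z}})\otimes\det V$. The paper's proof is more geometric and explains \emph{why} the forms vanish in terms of cycle classes, but it imports Collino's theorem and a deformation argument; your proof is self-contained within the paper's own Theorem~\ref{thm_descrip_h_1_and_wedge} and Proposition~\ref{prop_etale_cover_F_2_F_0}, and it isolates the mechanism cleanly (the $\det V$ twist shifts the eigenvalues off~$1$). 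One cosmetic point: you write both $\sigma^*X_6=\xi X_6$ and that $X_6$ has eigenvalue $\xi^2$; these refer to the pullback versus the contragredient action and are mutually consistent, but it would read more smoothly to fix one convention throughout.
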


\begin{proof} For the universal variety of planes $r_{\univ}\colon \mathcal F_2(\mathcal X)\rightarrow |\mathcal O_{\mathbb P^6}(3)|$, $R^3r_{\univ,*}\mathbb Q$ is a local system over the open subset $\{[X]\in |\mathcal O_{\mathbb P^6}(3)|,\ F_2(X)\ \text{is smooth}\}$ which, by Proposition \ref{prop_etale_cover_F_2_F_0}, contains an open subset of the locus of cyclic cubic $5$-folds.
  
 As a consequence, the Abel--Jacobi isomorphism $q_*p^*\colon H^3(F_2(X),\mathbb Q)\shortisor H^5(X,\mathbb Q)$ given by the result of Collino (Theorem \ref{thm_Collino_intro}) for general $X$ extends to the case of the general cyclic cubic $5$-fold.

 But, as noticed in the proof of Proposition \ref{prop_etale_cover_F_2_F_0}, for any $[P]\in F_0(Z)$, the associated cycle $q(p^{-1}(\pi^{-1}([P])))$ on $X_Z$ is the complete intersection cycle $\Span(P,p_0)\cap X_Z$, which belongs to a family of cycles parametrised by a rational variety, namely $\{[\Pi]\in G(4,V),\ p_0\in \Pi\}\simeq G(3,H)$. Now, as an abelian variety contains no rational curve, the Abel--Jacobi map $\Phi\colon G(3,H)\rightarrow J^5(X_Z)$, $[P]\mapsto [\Span(P,p_0)\cap X_Z] - [\Span(P_0,p_0)\cap X_Z]$ ($[P_0]$ being a reference point) is constant. Hence the restriction $\Phi_{(\pi_*,\id_{X_Z})\mathbb P(\mathcal E_3)}\colon F_0(Z)\rightarrow J^5(X_Z)$ of $\Phi$ to the sub-family $(\pi_*,\id_{X_Z})\mathbb P(\mathcal E_3)\subset F_0(Z)\times X_Z$ (of planes $P$ such that $\Span(P,p_0)\cap X_Z$ consists of three planes) is constant; \textit{i.e.}, $q_*p^*\pi^*\colon H^3(F_0(Z),\mathbb Z)\rightarrow H^5(X_Z,\mathbb Z)$ is trivial.

 As $\pi$ is \'etale, $\pi^*\colon H^3(F_0(Z),\mathbb Q)\rightarrow H^3(F_2(X_Z),\mathbb Q)$ is injective, so that the trivial map $q_*p^*\pi^*$ is the composition of a injective map followed by an isomorphism. 
\end{proof}

We can then compute the rest of the Hodge numbers:
\begin{enumerate}
\item Again using the package Schubert2 of Macaulay2, we can use the Koszul resolution of $\mathcal O_{F_0(Z)}$ by $\wedge^i\mathcal F^*$ (where $\mathcal F$ is defined by (\ref{ex_seq_def_F})) to compute $\chi(\mathcal O_{F_0(Z)})=1071$ with the following code:
\begin{verbatim}
loadPackage "Schubert2"
G=flagBundle{3,3}
(Q,E)=bundles G
wE=exteriorPower(2,E)
P=projectiveBundle' wE
p=P.StructureMap
pl=exteriorPower(3,E)
pol=p^*pl**dual(OO_P(1))
F=p^*symmetricPower(3,E)-symmetricPower(3,pol)
chi(exteriorPower(0,dual(F)))-chi(exteriorPower(1,dual(F)))
+chi(exteriorPower(2,dual(F)))-chi(exteriorPower(3,dual(F)))
+chi(exteriorPower(4,dual(F)))-chi(exteriorPower(5,dual(F)))
+chi(exteriorPower(6,dual(F)))-chi(exteriorPower(7,dual(F)))
+chi(exteriorPower(8,dual(F)))-chi(exteriorPower(9,dual(F)))
\end{verbatim}
so we get $h^2(\mathcal O_{F_0(Z)})=1070$.\\

\item Then as $\pi$ is \'etale of degree $3$, we get $\chi_{\topp}(F_0(Z))=\frac 1 3 \chi_{\topp}(F_2(X_Z))=4347$. So $h^{1,1}(F_0(Z))=2207$.
\end{enumerate}


\end{document}